\title{Space-Time CutFEM on Overlapping Meshes: Simple Discontinuous Mesh Evolution}
\author{Mats G. Larson\footnote{\addressumu} \mbox{} Carl Lundholm\footnote{\addressumu}}
\date{\today}
\numberwithin{equation}{section}
\newtheorem{theorem}{Theorem}[section] 
\newtheorem{lemma}{Lemma}[section]
\newtheorem{corollary}{Corollary}[section]
\newcommand{\bs}{\bar{s}}
\newcommand{\ud}{\,\mathrm{d}}
\newcommand{\nab}{\nabla}
\newcommand{\lap}{\Delta}
\newcommand{\euler}{\text{e}}
\newcommand{\norma}[1]{\left|\mkern-1.5mu\left|\mkern-1.5mu\left| #1 \right|\mkern-1.5mu\right|\mkern-1.5mu\right|_{A_{h,t}}}
\newcommand{\normam}[1]{\left|\mkern-1.5mu\left|\mkern-1.5mu\left| #1 \right|\mkern-1.5mu\right|\mkern-1.5mu\right|_{A_{m}}}
\newcommand{\norman}[1]{\left|\mkern-1.5mu\left|\mkern-1.5mu\left| #1 \right|\mkern-1.5mu\right|\mkern-1.5mu\right|_{A_{n}}}
\newcommand{\normanm}[1]{\left|\mkern-1.5mu\left|\mkern-1.5mu\left| #1 \right|\mkern-1.5mu\right|\mkern-1.5mu\right|_{A_{n-1}}}
\newcommand{\normaspecmn}[1]{\left|\mkern-1.5mu\left|\mkern-1.5mu\left| #1 \right|\mkern-1.5mu\right|\mkern-1.5mu\right|_{\mathscr{A}_{m,n}}}
\newcommand{\normaspecnm}[1]{\left|\mkern-1.5mu\left|\mkern-1.5mu\left| #1 \right|\mkern-1.5mu\right|\mkern-1.5mu\right|_{\mathscr{A}_{n,m}}}
\newcommand{\Gt}{\Gamma(t)}
\newcommand{\bGn}{\bar{\Gamma}_n}
\newcommand{\Om}[1]{{\Omega_#1}}
\newcommand{\om}[1]{{\omega_#1}}
\newcommand{\Omt}[1]{{\Omega_#1(t)}}
\begin{document}

\maketitle

\begin{abstract}
We present a cut finite element method for the heat equation on two overlapping meshes: a stationary background mesh and an overlapping mesh that evolves inside/``on top'' of it. Here the overlapping mesh is prescribed a simple discontinuous evolution, meaning that its location, size, and shape as functions of time are \emph{discontinuous} and \emph{piecewise constant}. For the discrete function space, we use continuous Galerkin in space and discontinuous Galerkin in time, with the addition of a discontinuity on the boundary between the two meshes. The finite element formulation is based on Nitsche's method. The simple discontinuous mesh evolution results in a space-time discretization with a slabwise product structure between space and time which allows for existing analysis methodologies to be applied with only minor modifications. We follow the analysis methodology presented by Eriksson and Johnson in~\cite{Eriksson1991, Eriksson1995}. The greatest modification is the introduction of a Ritzlike ``shift operator'' that is used to obtain the discrete strong stability needed for the error analysis. The shift operator generalizes the original analysis to some methods for which the discrete subspace at one time does not lie in the space of the stiffness form at the subsequent time. The error analysis consists of an a priori error estimate that is of optimal order with respect to both time step and mesh size. We also present numerical results for a problem in one spatial dimension that verify the analytic error convergence orders.
\end{abstract}

\vspace{1cm}
\noindent\normalsize{\textbf{Keywords:}} CutFEM, space-time CutFEM, time-dependent CutFEM, overlapping meshes, parabolic problem, error analysis, modified Ritz projection operator, shift operator


\section{Introduction}

\paragraph{Issue - Cost of mesh generation:} Generating computational meshes for numerically solving differential equations can be a computationally costly procedure. In practical applications the mesh generation can often represent a substantial amount of the total computation time. This is especially true for problems where the solution domain changes during the solve process, e.g., evolving geometry and shape optimization. With standard methods the mesh then has to be constantly checked for degeneracy and updated if needed, meaning a persisting meshing cost for the entire solve process. 

\paragraph{Remedy - CutFEM:} Cut finite element methods (CutFEMs) provide a way of decoupling the computational mesh from the problem geometry. This means that the same discretization can be used for a changing solution domain. CutFEMs can thus make remeshing redundant for problems with changing geometry but also for other applications involving meshing such as adaptive mesh refinement. The cost of CutFEMs is treating the mesh cells that are arbitrarily cut by the independent problem geometry.

\begin{wrapfigure}{r}{0.5\textwidth}
\centering
\includegraphics[width=0.5\textwidth]{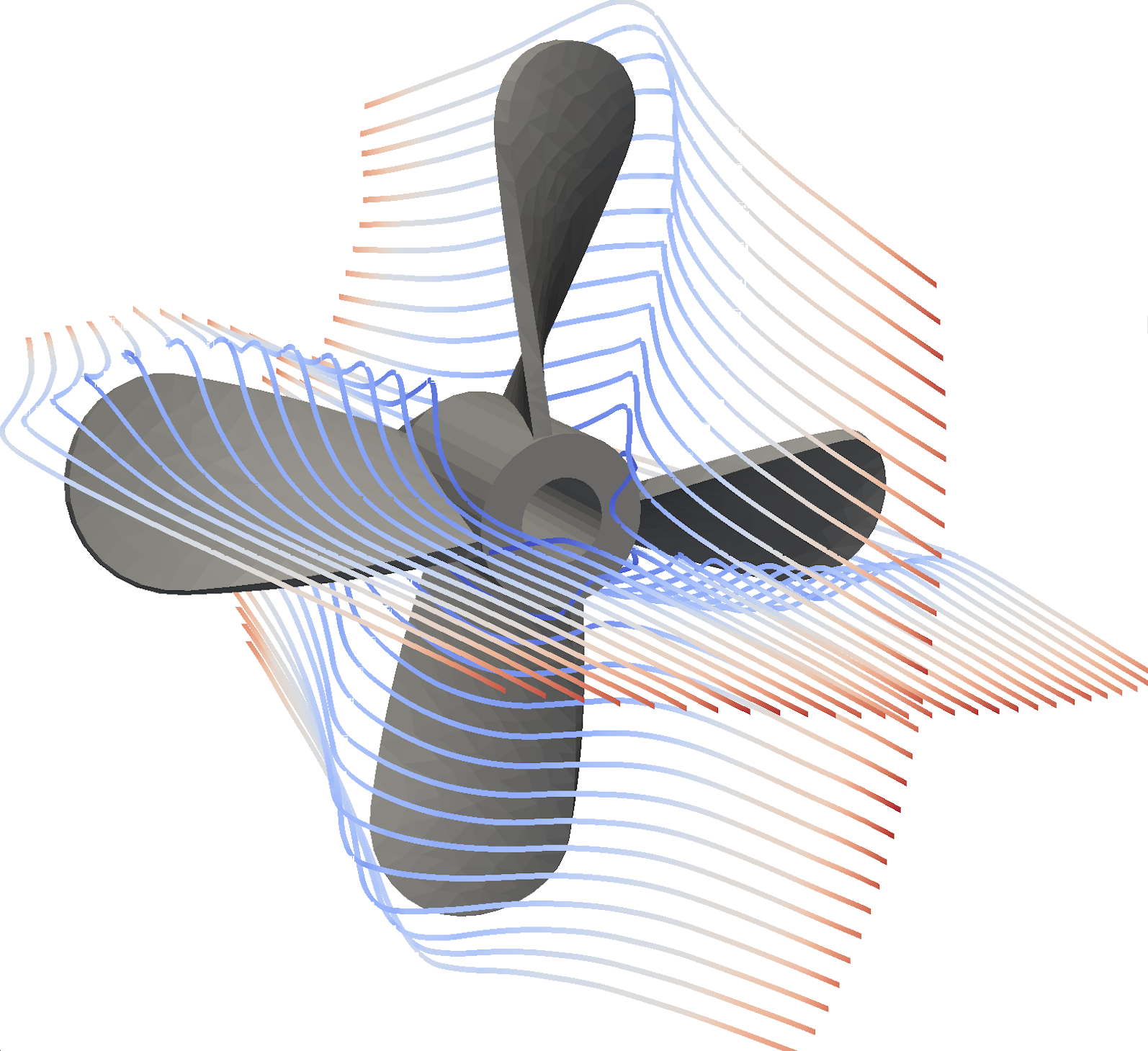}
\caption{Computed streamlines around a propeller. \href{https://link.springer.com/article/10.1186/s40323-015-0043-7}{Image} by \href{https://anders.logg.org/}{Anders Logg} is licensed under \href{https://creativecommons.org/licenses/by/4.0/}{CC BY 4.0}.}
\label{fig_propeller_flow}
\end{wrapfigure}

\paragraph{CutFEM on overlapping meshes:} A common type of problem with changing geometry is one where there is a moving object in the solution domain, e.g., see Figure~\ref{fig_propeller_flow}. A straightforward CutFEM-approach to this problem would be to consider CutFEM for the interface problem, i.e., to use a background mesh of the empty solution domain together with an interface that represents the object. However, a more advantageous and sophisticated approach is to consider CutFEM on \emph{overlapping meshes}, meaning two or more meshes ordered in a mesh hierarchy. This is also called composite grids/meshes and multimesh in the literature. The idea is to use a background mesh of the empty solution domain, just as for the interface problem, but instead to encapsulate the object in a second mesh. The mesh containing the object is then placed ``on top'' of the background mesh, creating a mesh hierarchy. The motion of the object will thus also cause its encapsulating mesh to move. There are some advantages of using a second overlapping mesh instead of an interface. Firstly, an overlapping mesh can incorporate boundary layers close to the object. Something an interface cannot. Secondly, the total number of degrees of freedom (DOFs) of the resulting linear system may be reduced. This is so since for CutFEM for the interface problem this number can be twice the number of DOFs of the background mesh or more, whereas for CutFEM on overlapping meshes it will be the number of DOFs of the background mesh plus the number of DOFs of the second mesh. Thirdly, if the object has a complicated geometry, representing it with an interface can lead to tricky cut situations and thus a higher computational cost. By instead using an object-encapsulating mesh with a simply-shaped exterior boundary, the cut situations can be made less tricky, see Figure~\ref{fig_propeller_om}. A way to further sophisticate this is to allow the moving object to deform the interior of the overlapping mesh while initially keeping its exterior boundary fixed. Only when the deformations have become too large is the overlapping mesh ``snapped'' into place to avoid degeneracy. Such a snapping feature provides a choice between computing cut situations or computing deformations, thus allowing the cheapest option for the situation at hand to be chosen. A drawback of using a second overlapping mesh instead of an interface is that overlapping meshes require collision computations between the cells of the meshes, something that can be computationally expensive.

\begin{wrapfigure}{r}{0.5\textwidth}
\centering
\includegraphics[width=0.5\textwidth]{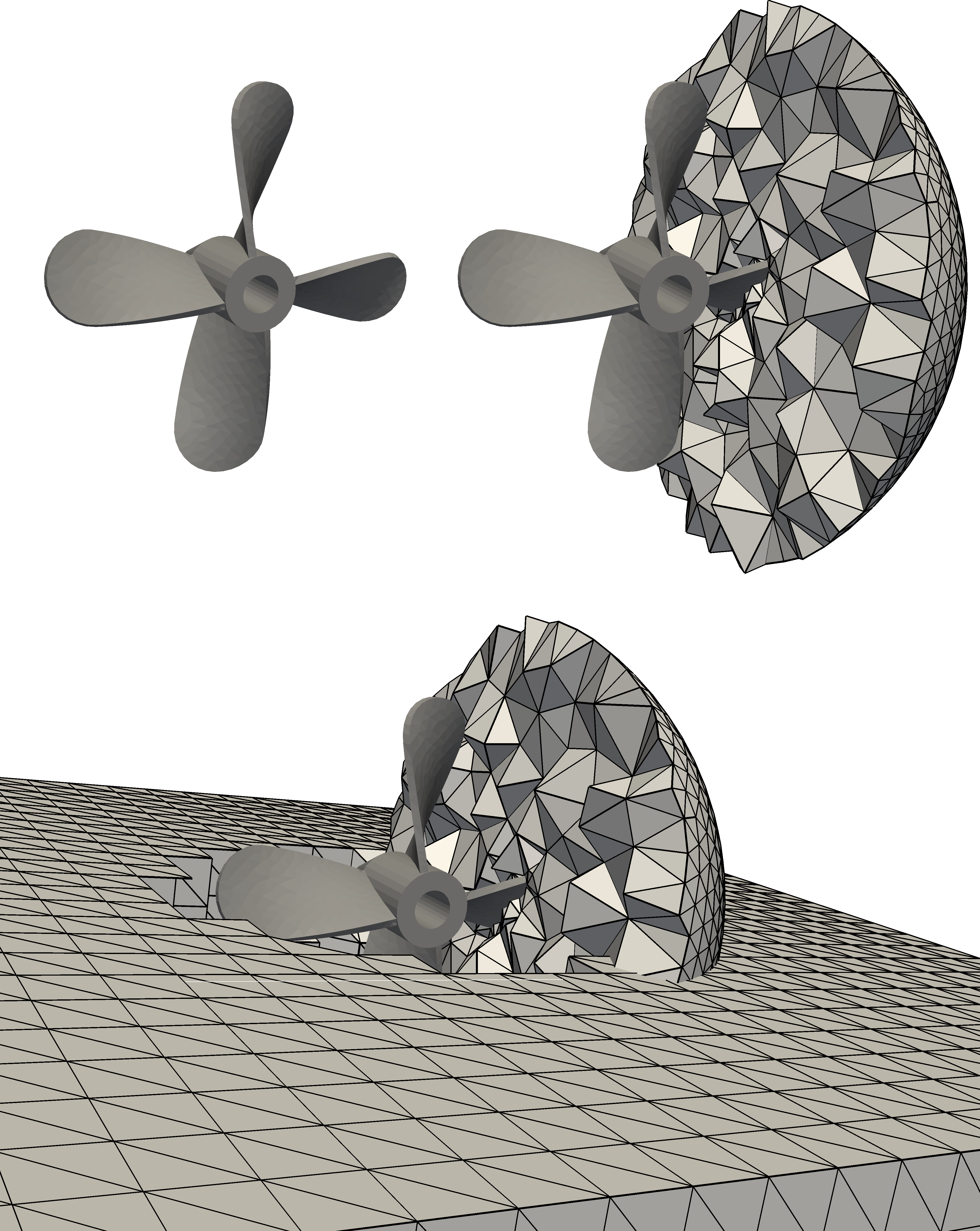}
\caption{Overlapping meshes for a problem with a rotating propeller. \href{https://link.springer.com/article/10.1186/s40323-015-0043-7}{Image} by \href{https://anders.logg.org/}{Anders Logg} is licensed under \href{https://creativecommons.org/licenses/by/4.0/}{CC BY 4.0}.}
\label{fig_propeller_om}
\end{wrapfigure}

CutFEM on overlapping meshes can also be used as an alternative to adaptive mesh refinement by keeping a smaller finer mesh in regions requiring higher accuracy. Yet another application example is to use a composition of simpler structured meshes to represent a complicated domain.

\paragraph{Literary background:} Over the past two decades, a theoretical foundation for the formulation of
stabilized CutFEM has been developed by extending the ideas
of Nitsche, presented in~\cite{Nitsche:1971aa}, to a general weak formulation of the
interface conditions, thereby removing the need for domain-fitted
meshes. The foundations of CutFEM were presented
in~\cite{Hansbo:2002aa} and then extended to overlapping meshes in~\cite{Hansbo:2003aa}. The CutFEM methodology has since been developed and applied to a number of important multiphysics problems.
See for example~\cite{Burman:2007ab,Burman:2007aa,Becker:2009aa,Massing2014a}. For overlapping meshes in particular, see for example~\cite{Massing2014,Johansson:2015aa, Dokken2019, Johansson2019}. So far, only CutFEM for \emph{stationary} problems on overlapping meshes have been developed and analysed to a satisfactory degree, thus leaving analogous work for \emph{time-dependent} problems to be desired.

\paragraph{This work:} The work presented here is intended to be an initial part of developing and analysing CutFEMs for time-dependent problems on overlapping meshes. We consider a CutFEM for the heat equation on two overlapping meshes: one stationary background mesh and one moving overlapping mesh with no object. Depending on how the mesh motion is represented discretely, quite different space-time discretizations may arise, allowing for different types of analyses to be applied. Generally the mesh motion may either be continuous or discontinuous. We have considered the simplest case of both of these two types, which we refer to as \emph{simple continuous} and \emph{simple discontinuous} mesh motion. Simple continuous mesh motion means that the location of the overlapping mesh as a function of time is \emph{continuous} and \emph{piecewise linear}, and simple discontinuous mesh motion means that it is \emph{discontinuous} and \emph{piecewise constant}. The former is studied in other work and the latter in this. Here, with no extra effort, we may also extend the mesh motion to a mesh evolution, meaning that size and shape change as well. Thus, simple discontinuous mesh evolution means that the location, size, and shape of the overlapping mesh as functions of time are \emph{discontinuous} and \emph{piecewise constant}. 


\paragraph{Analytic novelty:} The simple discontinuous mesh evolution results in a space-time discretization with a slabwise product structure between space and time. Standard analysis methodology therefore work with some modifications. We follow the analysis presented by Eriksson and Johnson in~\cite{Eriksson1991, Eriksson1995}. The main modification is the introduction of what we call a ``shift operator'' which can be viewed as a modified Ritz projection operator. This is somewhat similar to what is done in~\cite{MaZhangZheng2022}. The shift operator is used to obtain the discrete strong stability needed for the error analysis. In the proof of the strong stability, discrete functions on one slab need to be translated into discrete functions on the subsequent slab. In~\cite{Eriksson1991}, this is done by simply assuming that the discrete space of one slab lies in the discrete space of the next. In~\cite{Eriksson1995}, the proof is generalized by mapping discrete functions of one slab to the next with the Ritz projection operator. Here, the Ritz projection on one slab is not defined for discrete functions on another slab since the discontinuity on the interface between the two meshes changes between slabs. This issue is solved by instead using the shift operator to map between discrete subspaces. The shift operator thus further generalizes the original analysis to be applicable to some methods where the discrete space of one slab does not lie in the space of the stiffness form on the subsequent slab, e.g., problems with changing interior geometry. The error analysis concerns an optimal order a priori error estimate of the $L^2$-norm of the approximation error at the final time. This estimate shows that the method preserves the superconvergence of the error with respect to the time step.

\paragraph{Paper overview:} In Section 2, the model problem is formulated. In Section 3, the CutFEM is presented. In Section 4, tools for the analysis are presented including the shift operator. In Section 5, we present and prove stability estimates for the discrete solution. In Section 6, we present and prove an optimal order a priori error estimate. In Section 7, we present numerical results for a problem in one spatial dimension that verify the analytic convergence orders. In the appendix we present technical estimates and interpolation results used in the analysis.

\section{Problem} \label{sec:probform}

\begin{wrapfigure}{r}{0.5\textwidth}
\centering
\def\svgwidth{0.5\textwidth}
\begingroup%
  \makeatletter%
  \providecommand\color[2][]{%
    \errmessage{(Inkscape) Color is used for the text in Inkscape, but the package 'color.sty' is not loaded}%
    \renewcommand\color[2][]{}%
  }%
  \providecommand\transparent[1]{%
    \errmessage{(Inkscape) Transparency is used (non-zero) for the text in Inkscape, but the package 'transparent.sty' is not loaded}%
    \renewcommand\transparent[1]{}%
  }%
  \providecommand\rotatebox[2]{#2}%
  \newcommand*\fsize{\dimexpr\f@size pt\relax}%
  \newcommand*\lineheight[1]{\fontsize{\fsize}{#1\fsize}\selectfont}%
  \ifx\svgwidth\undefined%
    \setlength{\unitlength}{1348.68937852bp}%
    \ifx\svgscale\undefined%
      \relax%
    \else%
      \setlength{\unitlength}{\unitlength * \real{\svgscale}}%
    \fi%
  \else%
    \setlength{\unitlength}{\svgwidth}%
  \fi%
  \global\let\svgwidth\undefined%
  \global\let\svgscale\undefined%
  \makeatother%
  \begin{picture}(1,0.62242716)%
    \lineheight{1}%
    \setlength\tabcolsep{0pt}%
    \put(0.78294052,0.16163747){\color[rgb]{0,0,0}\makebox(0,0)[lt]{\lineheight{0}\smash{\begin{tabular}[t]{l}\large : $\Omega_1$\end{tabular}}}}%
    \put(0,0){\includegraphics[width=\unitlength,page=1]{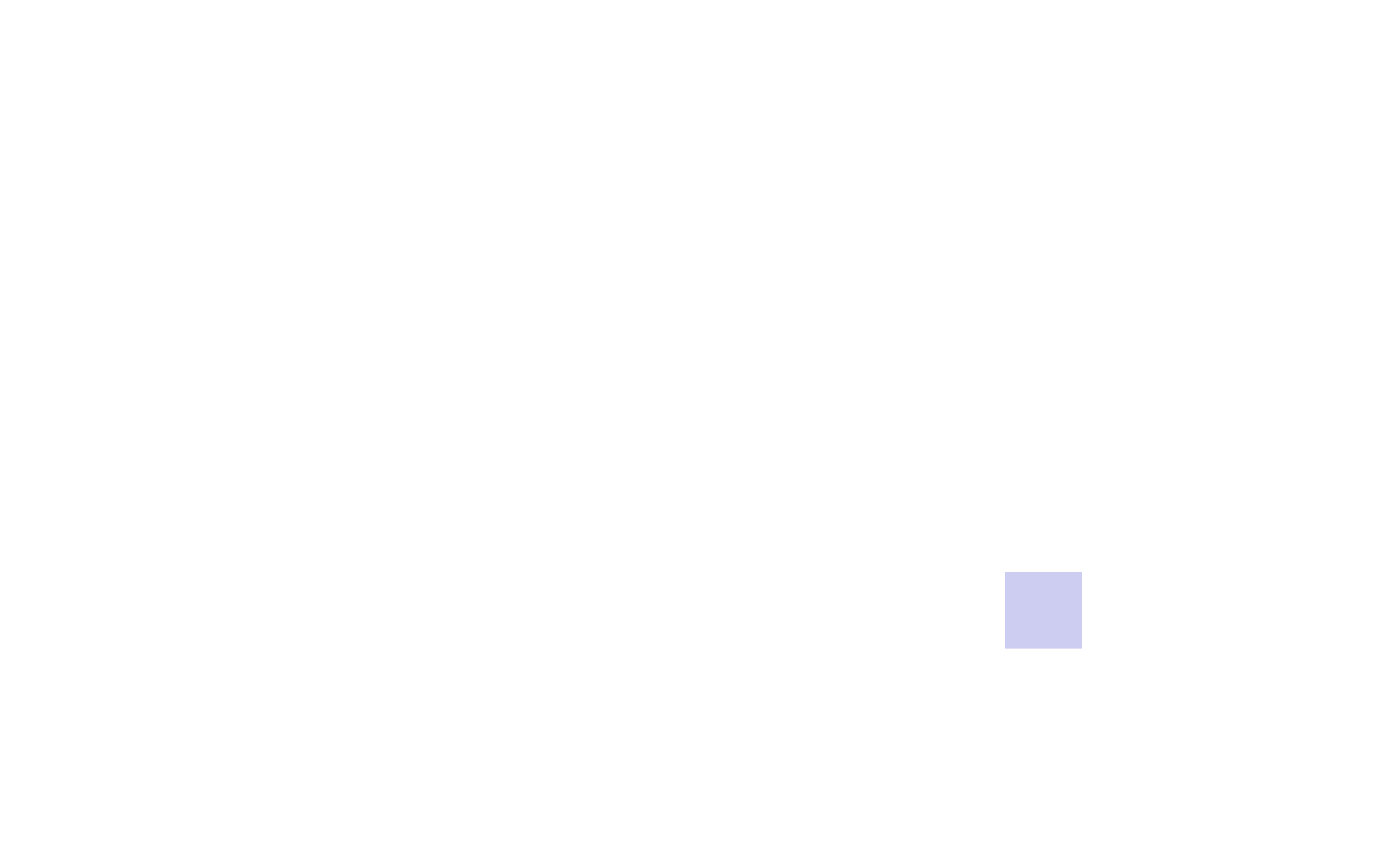}}%
    \put(0.78294052,0.08378411){\color[rgb]{0,0,0}\makebox(0,0)[lt]{\lineheight{0}\smash{\begin{tabular}[t]{l}\large : $\Omega_2$\end{tabular}}}}%
    \put(0.78294052,0.00593075){\color[rgb]{0,0,0}\makebox(0,0)[lt]{\lineheight{0}\smash{\begin{tabular}[t]{l}\large : $\Gamma$\end{tabular}}}}%
    \put(0,0){\includegraphics[width=\unitlength,page=2]{problemdomains.pdf}}%
    \put(0.5003338,0.07073026){\color[rgb]{0,0,0}\makebox(0,0)[lt]{\lineheight{0}\smash{\begin{tabular}[t]{l}\large $\Omega_0$\end{tabular}}}}%
    \put(0.37285761,0.0923501){\color[rgb]{0,0,0}\makebox(0,0)[lt]{\lineheight{0}\smash{\begin{tabular}[t]{l}\large $\mu$\end{tabular}}}}%
    \put(0.31234314,0.29020725){\color[rgb]{0,0,0}\makebox(0,0)[lt]{\lineheight{0}\smash{\begin{tabular}[t]{l}\large $G$\end{tabular}}}}%
    \put(0,0){\includegraphics[width=\unitlength,page=3]{problemdomains.pdf}}%
  \end{picture}%
\endgroup%

\caption{Partition of $\Om{0}$ into $\Om{1}$ (blue) and $\Om{2}$ (red) for $d = 2$ with $G$ moving with velocity $\mu$.}
\label{fig_probdom}
\end{wrapfigure}
For $d = 1, 2$, or $3$, let $\Om{0} \subset \mathbb{R}^d$ be a bounded convex domain with polygonal boundary $\partial\Om{0}$. Let $T > 0$ be a given final time. Let $G \subset \Om{0} \subset \mathbb{R}^d$ be another bounded domain with polygonal boundary $\partial G$. We let the location, size, and shape of $G$ be time-dependent by prescribing for $G$ a time-dependent spatially smooth velocity field $\mu_t : G(t) \to \mathbb{R}^d$. Using $\Om{0}$ and $G$, we define the following two domains:
\begin{align}
\Om{1} &:= \Om{0} \setminus (G \cup \partial G) \label{def:Om1} \\
\Om{2} &:= \Om{0} \cap G \label{def:Om2}
\end{align}
with boundaries $\partial\Om{1}$ and $\partial\Om{2}$, respectively. Let their common boundary be
\begin{equation}
\Gamma := \partial\Om{1} \cap \partial\Om{2} \label{def:Gamma}
\end{equation}
For $t \in [0, T]$, we have the partition
\begin{equation}
\Om{0} = \Omt{1} \cup \Gamma(t) \cup \Omt{2} \label{partitionOm0}
\end{equation}
See Figure~\ref{fig_probdom} for an illustration. We consider the heat equation in $\Om{0} \times (0, T]$ with source $f \in L^2((0, T], \Om{0})$, homogeneous Dirichlet boundary conditions, and initial data $u_0 \in H^2(\Om{0}) \cap H_0^1(\Om{0})$:
\begin{equation} 
\left\{
\begin{split}
\dot{u} - \lap{u}  & = f && \text{in} \ \Om{0} \times (0, T] \\
u & = 0 && \text{on} \ \partial\Om{0} \times (0, T] \\
u & = u_0 && \text{in} \ \Om{0} \times \{0\} 
\end{split}
\right. \label{heateq}
\end{equation} 


\section{Method} \label{sec:feform}

\subsection{Preliminaries}

Let $\mathcal{T}_0$ and $\mathcal{T}_G$ be quasi-uniform simplicial meshes of $\Om{0}$ and $G$, respectively. We denote by $h_K$ the diameter of a simplex $K$. We partition the time interval $(0, T]$ quasi-uniformly into $N$ subintervals $I_n = (t_{n-1}, t_n]$ of length $k_n = t_n - t_{n-1}$, where $0 = t_0 < t_1 < \ \dots \ < t_N = T$ and $n = 1, \dots , N$. We assume the following space-time quasi-uniformity: For $h = \max_{K \in \mathcal{T}_0 \cup \mathcal{T}_G}\{h_K\}$, and $k = \max_{1 \leq n \leq N}\{k_n\}$,
\begin{equation}
h \lesssim k_{\min} \quad \quad k \lesssim h_{\min}
\label{quasiuniformity_st}
\end{equation}
where $k_{\min} = \min_{1 \leq n \leq N}\{k_n\}$, and $h_{\min} = \min_{K \in \mathcal{T}_0 \cup \mathcal{T}_G}\{h_K\}$. We note that this space-time quasi-uniformity is stricter than the one assumed in~\cite{Eriksson1995}, which here has the equivalent form $h^2 \lesssim k_{\min}$. The stricter one is needed because of using the shift operator results Lemma~\ref{lem:shiftop_error} and Lemma~\ref{lem:shiftenergy} in the analysis. We next define the following slabwise space-time domains:
\begin{align}
S_{{0,n}}  &:= \Om{0} \times I_n \label{def:S0n} \\
S_{{i,n}} &:= \{(x, t) \in S_{{0,n}}  : x \in \Omt{i}\} \label{def:Sin} \\
\bGn &:= \{(s, t) \in S_{{0,n}}  : s \in \Gamma(t)\} \label{def:bGn}
\end{align}
\begin{wrapfigure}{r}{0.5\textwidth}
\centering
\def\svgwidth{0.5\textwidth}
\import{figures/}{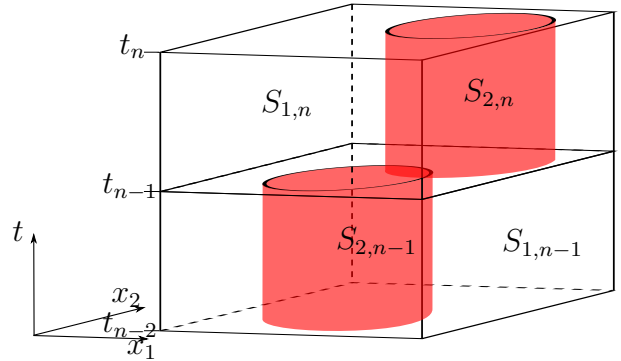}
\caption{Space-time slabs with simple discontinuous mesh motion.}
\label{fig_spacetimeslabs_dG0mesh}
\vspace{-\baselineskip}
\end{wrapfigure}
In general we will use bar, i.e., $\bar{\cdot}$, to denote something related to space-time, such as domains and variables.
In addition to the domains $\Omt{1}$ and $\Omt{2}$, we also consider the ``covered'' overlap domain $\Omt{O}$. To define it, we will use the set of simplices $\mathcal{T}_{0, \Gt} := \{K \in \mathcal{T}_0 : K \cap \Gt \neq \emptyset \}$, i.e., all simplices in $\mathcal{T}_0$ that are cut by $\Gamma$ at time $t$. We define the overlap domain $\Omt{O}$ for a time $t \in [0, T]$ by
\begin{equation}
\Omt{O} := \bigcup_{K \in \mathcal{T}_{0, \Gt}} K \cap \Omt{2}
\label{def:OmOt}
\end{equation}
As a discrete counterpart to the evolution of the domain $G$, we prescribe a simple discontinuous evolution for the overlapping mesh $\mathcal{T}_G$. By this we mean that the location, size, and shape of the overlapping mesh $\mathcal{T}_G$ are functions with respect to time that are \emph{discontinuous} on $[0, T]$ and \emph{constant} on each $I_n$. This means that on each $I_n$ the location, size, and shape of $\mathcal{T}_G$ are fixed, but change from $I_{n-1}$ to $I_n$. We simply take $\mathcal{T}_G$ on $I_n$ to be a mesh of $G(t_n)$. An illustration of the slabwise space-time domains $S_{i, n}$ defined by \eqref{def:Sin} is shown in Figure~\ref{fig_spacetimeslabs_dG0mesh}. The simple discontinuous mesh evolution results in the following: For $n = 1, \dots, N$
%
%
\begin{subequations}
\begin{align}
\Om{{i,n}} &= \Om{i}(t_n) = \Omt{i} \quad \forall t \in I_n \label{def:Omin} \\
\Gamma_n &= \Gamma(t_n) = \Gamma(t) \quad \forall t \in I_n \label{def:Gamman} \\
\Om{{O,n}} &= \Om{O}(t_n) = \Omt{O} \quad \forall t \in I_n \label{def:OmOn}
\end{align}
\label{slabwise_s_domains}
\end{subequations}
%
%

\subsection{Finite element spaces} \label{subsec:fespaces}

We define the discrete spatial finite element spaces $V_{h,0}$ and $V_{h,G}$ as the spaces of continuous piecewise polynomials of degree $\le p$ on $\mathcal{T}_0$ and $\mathcal{T}_G$, respectively. We also let the functions in $V_{h,0}$ be zero on $\partial\Om{0}$. For $t \in [0, T]$, we use these two spaces to define the broken finite element space $V_h(t)$ by    
\begin{equation}
\begin{split}
V_h(t) := \{v : v|_{\Omt{1}} &= v_0|_{\Omt{1}} \text{ for some } v_0 \in V_{h,0} \text{ and } \\
v|_{\Omt{2}} &= v_G|_{\Omt{2}} \text{ for some } v_G \in V_{h,G} \}
\end{split} \label{fesvht}
\end{equation}
See Figure~\ref{figfefuncspace} for an illustration of a function $v \in V_h(t)$. From the simple discontinuous evolution of $\mathcal{T}_G$, we have via (\ref{def:Omin}) that: For $n = 1, \dots, N$
\begin{equation}
V_{h,n} = V_h(t_n) = V_h(t) \quad \forall t \in I_n
\label{slabwise_s_space}
\end{equation}
\begin{figure}[h]
\centering
\def\svgwidth{0.5\textwidth}
\import{figures/}{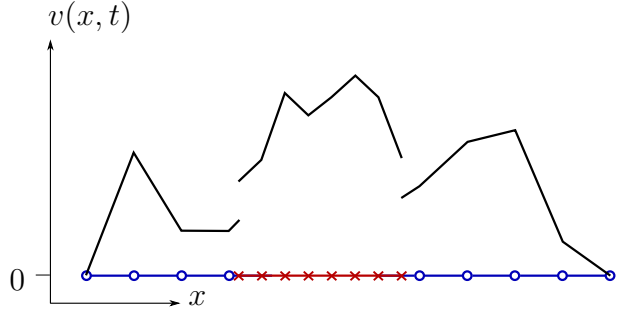}
\caption{Example of $v(\cdot, t) \in V_h(t)$ for $d = 1$ and $p = 1$, where $\mathcal{T}_0$ is blue and $\mathcal{T}_G$ red.}
\label{figfefuncspace}
\end{figure}
\noindent For $n = 1, \dots, N$, we define the discrete space-time finite element spaces $V_{h,0}^n$ and $V_{h,G}^n$ as the spaces of functions that for a $t \in I_n$ lie in $V_{h,0}$ and $V_{h,G}$, respectively, and in time are polynomials of degree $\le q$ along the trajectories of $\mathcal{T}_0$ and $\mathcal{T}_G$ for $t \in I_n$, respectively. For $n = 1, \dots, N$, we use these two spaces to define the broken finite element space $V_h^n$ by:
\begin{equation}
\begin{split}
V_h^n := \{v : v|_{S_{1,n}} & = v_0^n|_{S_{1,n}} \text{ for some } v_0^n \in V_{h,0}^n \text{ and } \\ 
v|_{S_{2,n}} & = v_G^n|_{S_{2,n}} \text{ for some } v_G^n \in V_{h,G}^n \}
\end{split} \label{fesvhn}
\end{equation} 
We define the global space-time finite element space $V_h$ by: 
\begin{equation}
V_h := \{v : v|_{S_{0,n}} \in V_h^n, n = 1, \dots, N \} \label{fesvh}
\end{equation}

\subsection{Finite element formulation} \label{subsec:feform}

We may now formulate the space-time cut finite element formulation for the problem described in Section \ref{sec:probform} as follows: Find $u_h \in V_h$ such that
\begin{equation}
B_h(u_h, v) = \int_{0}^T (f, v)_{\Om{0}} \ud t + (u_{0}, v_{0}^+)_{\Om{0}} \quad \forall v \in V_h
\label{feform}
\end{equation}
The non-symmetric bilinear form $B_h$ is defined by
\begin{equation}
B_h(w, v) := \sum_{n=1}^N \int_{I_n} (\dot w, v)_{\Om{0}} + A_{h,t}(w, v) \ud t + \sum_{n=1}^{N-1}([w]_{n}, v_{n}^+)_{\Om{0}} + (w_{0}^+, v_{0}^+)_{\Om{0}}
\label{Bhdef}
\end{equation}
where $( \cdot , \cdot )_{\Omega}$ is the $L^2(\Omega)$-inner product, $[v]_n$ is the jump in $v$ at time $t_n$, i.e., $[v]_n = v_n^+ - v_n^-$, $v_n^\pm = \lim_{\varepsilon \to 0+} v(x, t_n \pm \varepsilon)$. The symmetric bilinear form $A_{h,t}$ is defined by
\begin{equation}
\begin{split}
A_{h,t}(w, v) := & \sum_{i=1}^2 (\nab w, \nab v)_{\Omt{i}} - (\langle \partial_{n} w \rangle, [v])_{\Gamma(t)} - (\langle \partial_{n} v \rangle, [w])_{\Gamma(t)} \\
& + (\gamma h_K^{-1}[w],[v])_{\Gamma(t)} + ([\nab w],[\nab v])_{\Omt{O}} 
\end{split}
\label{Ahdef}
\end{equation}  
where $\langle v \rangle$ is a convex-weighted average of $v$ on $\Gamma$, i.e., $\langle v \rangle = \omega_1v_1 + \omega_2v_2$, where $\omega_1, \omega_2 \in [0, 1]$ and $\omega_1 + \omega_2 = 1$, $v_i = \lim_{\varepsilon \to 0+} v(\bs - \varepsilon n_i)$, $\bs = (s,t)$, $n$ is the normal vector to $\Gamma$ (not to be confused with time index $n$, e.g., in $t_n$), $\partial_n v = n \cdot \nabla v$, $[v]$ is the jump in $v$ over $\Gamma$, i.e., $[v]= v_1 - v_2$, $\gamma \geq 0$ is a stabilization parameter, $h_K = h_K(x) = h_{K_0}$ for $x \in K_0$, where $h_{K_0}$ is the diameter of simplex $K_0 \in \mathcal{T}_0$, and $\Omt{O}$ is the overlap region defined by \eqref{def:OmOt}.

\section{Analytic preliminaries}

\subsection{The bilinear form $A_{h,t}$}
The space of $A_{h, t}$ is $H^{3/2 + \varepsilon}(\cup_i \Omt{i})$ where $\varepsilon > 0$ may be arbitrarily small.
%
%
From the simple discontinuous evolution of $\mathcal{T}_G$, we have via (\ref{slabwise_s_domains}) that
\begin{equation}
A_{n} = A_{h,t_n} = A_{h,t} \quad \forall t \in I_n
\label{Ahndef}
\end{equation}
%
%
Let $\Gamma_K(t) := K \cap \Gamma(t)$. We define the following two mesh-dependent norms:
\begin{equation}
\| w \|_{1/2,h,\Gamma(t)}^2 := \sum_{K \in \mathcal{T}_{0,\Gamma(t)}} h_K^{-1} \| w \|_{\Gamma_K(t)}^2 \quad \quad \| w \|_{-1/2,h,\Gamma(t)}^2 := \sum_{K \in \mathcal{T}_{0,\Gamma(t)}} h_K \| w \|_{\Gamma_K(t)}^2
\label{def:HHnorm} 
\end{equation}
Note that
\begin{equation}
\| w \|_{\Gamma(t)}^2 \leq h \| w \|_{1/2,h,\Gamma(t)}^2 \quad \quad (w,v)_{\Gamma(t)} \leq \| w \|_{-1/2,h,\Gamma(t)}\| v \|_{1/2,h,\Gamma(t)}
\label{HHnormineqs}
\end{equation}
%
%
We define the time-dependent spatial energy norm $\norma{\cdot}$ by
\begin{equation}
\begin{split}
\norma{w}^2 := \sum_{i = 1}^2 \| \nab w \|_{\Omt{i}}^2 + \|\langle \partial_{n} w \rangle \|_{-1/2,h,\Gamma(t)}^2 + \|[w] \|_{1/2,h,\Gamma(t)}^2 + \|[\nab w]\|_{\Omt{O}}^2
\end{split} 
\label{def:anorm}   
\end{equation}   
Continuity of $A_{h,t}$ follows from using \eqref{HHnormineqs} in (\ref{Ahdef}). Next we consider the coercivity:
 
\begin{lemma}[Discrete coercivity of $A_{h,t}$] \label{Ahtcoerlem} 
Let the bilinear form $A_{h,t}$ and the energy norm $\norma{\cdot}$ be defined by \eqref{Ahdef} and \eqref{def:anorm}, respectively. Then, for $t \in [0, T]$ and $\gamma$ sufficiently large,
\begin{equation}
A_{h,t}(v,v) \gtrsim \norma{v}^2 \quad \forall v \in V_h(t) \label{Ahtcoer}
\end{equation}
\begin{proof} Following the proof of the coercivity in \cite{Hansbo:2002aa}, we consider
\begin{equation}
\begin{split}
2(\langle \partial_{n} v \rangle, [v])_{\Gamma(t)} \le & \; \frac{1}{\varepsilon}\|\langle \partial_{n} v \rangle \|_{-1/2,h,\Gamma(t)}^2 + \varepsilon \| [v] \|_{1/2,h,\Gamma(t)}^2 \\
\le & \; \frac{2}{\varepsilon} C_I \bigg( \sum_{i=1}^2 \| \nab v \|_{\Omt{i}}^2 + \| [\nab v]\|_{\Omt{O}}^2 \bigg) \\
& - \frac{1}{\varepsilon}\|\langle \partial_{n} v \rangle \|_{-1/2,h,\Gamma(t)}^2 + \varepsilon\| [v] \|_{1/2,h,\Gamma(t)}^2
\end{split} \label{Ahtcoerlemvvmid}
\end{equation}
where we have used Lemma~\ref{invineqgamlem} and denoted its constant by $C_I$ . We use \eqref{Ahtcoerlemvvmid} in
\begin{equation}
\begin{split}
A_{h,t}(v,v) = & \sum_{i=1}^2 \|\nab v \|_{\Omt{i}}^2 - 2(\langle \partial_{n} v \rangle, [v])_{\Gamma(t)}
+ \gamma \| [v]\|_{1/2,h,\Gamma(t)}^2 + \|[\nab v]\|_{\Omt{O}}^2 \\
\ge & \; \bigg(1 - \frac{2 C_I}{\varepsilon}\bigg) \sum_{i=1}^2\|\nab v \|_{\Omt{i}}^2 + \frac{1}{\varepsilon}\|\langle \partial_{n} v \rangle \|_{-1/2,h,\Gamma(t)}^2 \\
& + (\gamma - \varepsilon ) \| [v] \|_{1/2,h,\Gamma(t)}^2 + \bigg(1 - \frac{2 C_I}{\varepsilon} \bigg) \|[\nab v]\|_{\Omt{O}}^2 
\end{split} \label{Ahtcoerlemvvfin}
\end{equation}
By taking $\varepsilon > 2C_I$, and $\gamma > \varepsilon$ we may obtain (\ref{Ahtcoer}) from (\ref{Ahtcoerlemvvfin}).
\end{proof}
\end{lemma}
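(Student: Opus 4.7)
The plan is to run a standard Nitsche-type coercivity argument, adapted for the overlap/ghost-penalty term. First I would expand $A_{h,t}(v,v)$ using the definition \eqref{Ahdef}. Note that three of the five contributions are directly nonnegative and already appear in the energy norm $\norma{\cdot}^2$: the bulk gradient terms $\sum_i \|\nabla v\|_{\Omt{i}}^2$, the penalty $\gamma\|[v]\|_{1/2,h,\Gamma(t)}^2$, and the overlap term $\|[\nabla v]\|_{\Omt{O}}^2$. The only indefinite contribution is the symmetric consistency/conjugate cross term $-2(\langle \partial_n v\rangle,[v])_{\Gamma(t)}$, which must be absorbed.

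For that cross term I would first apply the Cauchy--Schwarz bound in~\eqref{HHnormineqs} and then weighted Young's inequality with a parameter $\varepsilon>0$, obtaining
\[
2(\langle \partial_n v\rangle,[v])_{\Gamma(t)} \le \tfrac{1}{\varepsilon}\|\langle \partial_n v\rangle\|_{-1/2,h,\Gamma(t)}^2 + \varepsilon\|[v]\|_{1/2,h,\Gamma(t)}^2.
\]
The second piece is directly absorbed by the penalty $\gamma\|[v]\|_{1/2,h,\Gamma(t)}^2$ provided $\gamma>\varepsilon$. For the first piece I would invoke the discrete inverse/trace inequality (the referenced Lemma~\ref{invineqgamlem} with constant $C_I$) along the cut boundary, which bounds $\|\langle\partial_n v\rangle\|_{-1/2,h,\Gamma(t)}^2$ by $C_I\bigl(\sum_i\|\nabla v\|_{\Omt{i}}^2 + \|[\nabla v]\|_{\Omt{O}}^2\bigr)$; the overlap ghost-penalty term is essential here to let the inverse estimate see the gradient jumps across $\Gamma(t)$.

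A small subtlety, and the only place requiring care, is that the energy norm contains the term $\|\langle\partial_n v\rangle\|_{-1/2,h,\Gamma(t)}^2$ which does not appear in $A_{h,t}(v,v)$ itself, so it cannot be obtained by simple positivity. The standard trick I would use is to add and subtract this quantity: bound only part of $\tfrac{1}{\varepsilon}\|\langle\partial_n v\rangle\|_{-1/2,h,\Gamma(t)}^2$ by the inverse inequality while keeping a leftover copy of it as $+\tfrac{1}{\varepsilon}\|\langle\partial_n v\rangle\|_{-1/2,h,\Gamma(t)}^2$ with a negative sign on the inverse-estimate side. Concretely, write the upper bound on the cross term as (inverse-estimate bound) $-\tfrac{1}{\varepsilon}\|\langle\partial_n v\rangle\|_{-1/2,h,\Gamma(t)}^2 + \varepsilon\|[v]\|_{1/2,h,\Gamma(t)}^2$, so that reinserting into $A_{h,t}(v,v)$ produces a genuine positive contribution of $\tfrac{1}{\varepsilon}\|\langle\partial_n v\rangle\|_{-1/2,h,\Gamma(t)}^2$.

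Collecting terms then yields coefficients $\bigl(1-\tfrac{2C_I}{\varepsilon}\bigr)$ in front of the bulk and overlap gradient contributions, $\tfrac{1}{\varepsilon}$ in front of $\|\langle\partial_n v\rangle\|_{-1/2,h,\Gamma(t)}^2$, and $(\gamma-\varepsilon)$ in front of $\|[v]\|_{1/2,h,\Gamma(t)}^2$. Choosing $\varepsilon>2C_I$ and then $\gamma>\varepsilon$ renders every coefficient strictly positive, at which point the lower bound $A_{h,t}(v,v)\gtrsim\norma{v}^2$ follows with a constant depending on $\varepsilon,\gamma,C_I$ but independent of $h$ and $t$. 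The main obstacle I anticipate is purely bookkeeping around the cross term; the essential geometric input is that the constant $C_I$ in the cut trace inequality is uniform in the cut configuration, which is exactly what the overlap/ghost-penalty term is there to enforce.
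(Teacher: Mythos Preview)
Your proposal is correct and follows essentially the same argument as the paper: Young's inequality on the cross term, the inverse inequality of Lemma~\ref{invineqgamlem} to absorb $\|\langle\partial_n v\rangle\|_{-1/2,h,\Gamma(t)}^2$ into the bulk and overlap gradients, the add-and-subtract trick to retain a positive copy of that term, and the parameter choice $\varepsilon>2C_I$, $\gamma>\varepsilon$. The resulting coefficients you list match the paper's exactly.
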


\subsection{Standard operators that map to $V_h(t)$}

Here we define some standard spatial operators for every $t \in [0, T]$. The $L^2(\Om{0})$-projection operator $P_{h,t} : L^2(\Om{0}) \to V_h(t)$ is defined by
\begin{equation}
(P_{h,t}w, v)_{\Om{0}} = (w, v)_{\Om{0}} \quad \forall v \in V_h(t)
\label{Ptdef} 
\end{equation} 
The Ritz projection operator $R_{h,t} : H^{3/2 + \varepsilon}(\cup_i \Omt{i}) \to V_h(t)$ is defined by
%
\begin{equation}
A_{h,t}(R_{h,t}w, v) = A_{h,t}(w, v) \quad \forall v \in V_h(t)
\label{Rtdef} 
\end{equation}
\begin{lemma}[Estimates for the Ritz projection error] \label{lem:ritzop_error}
Let the spatial energy norm $\norma{\cdot}$ and the Ritz projection operator $R_{h,t}$ be defined by \eqref{def:anorm} and \eqref{Rtdef}, respectively. Then for any $w \in H^{p+1}(\Om{0}) \cap H^1_0(\Om{0})$
\begin{align}
\norma{w - R_{h,t}w} & \lesssim h^p \| D_x^{p+1} w \|_{\Om{0}} \label{lemres:ritzop_error_energy} \\
\| w - R_{h,t}w \|_\Om{0} & \lesssim h^{p+1} \| D_x^{p+1} w \|_{\Om{0}} \label{lemres:ritzop_error}
\end{align}
\begin{proof}
The proof is essentially the same as in the standard case with only natural modifications to account for the CutFEM setting. First the energy estimate (\ref{lemres:ritzop_error_energy}) is shown, which then is used in the Aubin-Nitsche duality trick to show (\ref{lemres:ritzop_error}). Let $\psi =  w - R_{h,t}w$ denote the projection error. Due to only having \emph{discrete} coercivity of $A_{h,t}$, we use the spatial interpolation operator $I_{h, t}$ defined by (\ref{def:interph}) to split the error into $\pi = w - I_{h,t}w$ and $\eta = I_{h,t}w - R_{h,t}w$. Thus
\begin{equation}
\norma{w - R_{h,t}w} = \norma{\psi} \leq \norma{\pi} + \norma{\eta}
\label{ritzop_error_split_norm}
\end{equation}
Since $\eta \in V_h(t)$, we use Lemma~\ref{Ahtcoerlem} to get
\begin{equation}
\norma{\eta}^2 \lesssim A_{h,t}(\eta, \eta) = - A_{h,t}(\pi, \eta) \lesssim \norma{\pi} \norma{\eta}
\label{ritzop_error_split_eta}
\end{equation}
Using this in (\ref{ritzop_error_split_norm}) and applying Lemma~\ref{lem:interphest_energy} we get
\begin{equation}
\norma{w - R_{h, t} w} \lesssim \norma{\pi} = \norma{w - I_{h,t}w} \lesssim h^p \| D_x^{p+1} w\|_{\Om{0}}
\label{ritzop_error_energyfin}
\end{equation}
which is (\ref{lemres:ritzop_error_energy}). For \eqref{lemres:ritzop_error}, we consider the auxiliary problem: Find $\phi \in H^2(\Om{0}) \cap H_0^1(\Om{0})$ such that $\lap{\phi} = \psi \ \text{in} \ \Om{0}.$
%
%
From regularity we have that $[\partial_n\phi]|_{\Gamma(t)} = 0$ in $L^2(\Gt)$. Using the integration by parts provided by Corollary~\ref{cor:partintbroksob_A} and the spatial interpolation operator $I_{h, t}$ for $p = 1$ in following the Aubin-Nitsche duality trick shows \eqref{lemres:ritzop_error}.
\end{proof}
\end{lemma}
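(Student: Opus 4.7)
The plan is to establish the two bounds in order: first the energy estimate via a Galerkin-orthogonality and coercivity argument, then the $L^2$ estimate by Aubin--Nitsche duality. The only deviation from the standard textbook template is that $A_{h,t}$ is coercive only on the discrete space $V_h(t)$ (Lemma~\ref{Ahtcoerlem}) and lives on a broken Sobolev space, so every manipulation must be carried out in the CutFEM energy norm $\norma{\cdot}$ and with the interface/overlap pieces of $A_{h,t}$ kept explicit.

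For \eqref{lemres:ritzop_error_energy}, set $\psi := w - R_{h,t}w$. Because discrete coercivity does not directly apply to $\psi$, I would introduce a spatial interpolant $I_{h,t}w \in V_h(t)$ satisfying an energy-norm interpolation bound of the form $\norma{w - I_{h,t}w}\lesssim h^p\|D_x^{p+1}w\|_{\Om{0}}$ (from the paper's interpolation toolbox) and split
\[
\psi = \pi + \eta, \qquad \pi := w - I_{h,t}w, \qquad \eta := I_{h,t}w - R_{h,t}w \in V_h(t).
\]
Galerkin orthogonality of the Ritz projection gives $A_{h,t}(\psi,\eta) = 0$, hence $A_{h,t}(\eta,\eta) = -A_{h,t}(\pi,\eta)$. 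Discrete coercivity on the left and continuity of $A_{h,t}$ with respect to $\norma{\cdot}$ on the right then yield $\norma{\eta}\lesssim\norma{\pi}$, and the triangle inequality combined with the interpolation bound for $\pi$ closes out the energy estimate.

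For \eqref{lemres:ritzop_error} I would run Aubin--Nitsche on the dual problem $-\lap\phi = \psi$ in $\Om{0}$ with $\phi = 0$ on $\partial\Om{0}$. Convexity of $\Om{0}$ yields $\phi \in H^2(\Om{0})\cap H^1_0(\Om{0})$ with elliptic regularity $\|D_x^2\phi\|_{\Om{0}}\lesssim\|\psi\|_{\Om{0}}$. Since $\phi$ is globally $H^2$, its trace and its normal-derivative trace at $\Gamma(t)$ are single-valued, so the broken integration-by-parts identity of Corollary~\ref{cor:partintbroksob_A} should convert $(\psi,-\lap\phi)_{\Om{0}}$ into $A_{h,t}(\psi,\phi)$ without any surviving interface or overlap residuals. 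Galerkin orthogonality then allows subtracting the $p=1$ spatial interpolant of $\phi$, after which continuity of $A_{h,t}$, the energy estimate just proved, and $p=1$ interpolation give
\[
\|\psi\|_{\Om{0}}^2 \lesssim \norma{\psi}\cdot\norma{\phi - I_{h,t}\phi}\lesssim h^p\|D_x^{p+1}w\|_{\Om{0}}\cdot h\|D_x^2\phi\|_{\Om{0}},
\]
and one cancellation of $\|\psi\|_{\Om{0}}$ via elliptic regularity finishes the bound.

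The step I expect to be the main obstacle is the broken integration by parts in the duality argument: $\psi$ is genuinely discontinuous across $\Gamma(t)$ and carries a nonzero $[\nab\psi]$ on the overlap $\Omt{O}$, so naive integration by parts on $\Omt{1}$ and $\Omt{2}$ produces both interface and overlap residual terms. Showing that they reassemble exactly into $A_{h,t}(\psi,\phi)$ relies essentially on $[\phi]|_{\Gamma(t)} = 0$, $[\partial_n\phi]|_{\Gamma(t)} = 0$, and $[\nab\phi]|_{\Omt{O}} = 0$, so that the Nitsche consistency, penalty, and overlap contributions on the $\phi$-side all vanish and only the symmetric Nitsche term involving $[\psi]$ survives, which precisely cancels the interface term produced by integration by parts on the two subdomains. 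Once this identity is in place, the remaining steps are routine applications of continuity, the energy estimate, and standard interpolation.
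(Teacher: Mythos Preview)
Your proposal is correct and follows essentially the same approach as the paper's own proof: the same $\pi+\eta$ splitting with discrete coercivity and Galerkin orthogonality for the energy estimate, and the same Aubin--Nitsche duality using Corollary~\ref{cor:partintbroksob_A} together with the regularity facts $[\phi]|_{\Gamma(t)}=0$, $[\partial_n\phi]|_{\Gamma(t)}=0$, $[\nab\phi]|_{\Omt{O}}=0$ to reassemble the broken integration by parts into $A_{h,t}(\psi,\phi)$ for the $L^2$ estimate. Your identification of the broken integration-by-parts step as the key CutFEM-specific point is exactly what the paper emphasizes as well.
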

\noindent The discrete Laplacian $\lap_{h,t} : H^{3/2 + \varepsilon}(\cup_i \Omt{i}) \to V_h(t)$ is defined by
%
\begin{equation}
(-\lap_{h,t} w, v)_\Om{0} = A_{h,t}(w, v) \quad \forall v \in V_h(t)
\label{dlapdef} 
\end{equation}
From the simple discontinuous evolution of $\mathcal{T}_G$, we have via (\ref{slabwise_s_space}) and (\ref{Ahndef}) that
\begin{align}
P_{n} & = P_{h,t_n} = P_{h,t} \quad \forall t \in I_n \label{Pndef} \\
R_{n} & = R_{h,t_n} = R_{h,t} \quad \forall t \in I_n \label{Rndef} \\
\lap_{n} & = \lap_{h,t_n} = \lap_{h,t} \quad \forall t \in I_n \label{dlapndef}
\end{align}

\subsection{The shift operator}

Here, we introduce the shift operator which is not present in the standard analysis, presented in~\cite{Eriksson1991, Eriksson1995}. The shift operator is needed because of the simple discontinuous mesh evolution in the CutFEM setting. The shift operator will be used in the proof of Lemma~\ref{stablem2}. At one point in the proof, one would like to consider $R_{n}u_{h,n-1}^-$. This is however undefined in the current setting because of the shifting discontinuity coming from the evolution of $\Gamma$. Since $R_{n}$ is only defined for functions in $H^{3/2 + \varepsilon}(\cup_{i} \Om{{i,n}}) \subset H^1(\cup_{i} \Om{{i,n}})$
and $u_{h,n-1}^- \in V_{h,n-1} \subset H^1(\cup_{i} \Om{{i,n-1}})$, the projection $R_{n}u_{h,n-1}^-$ is not defined. Enter shift operator. The idea is to consider a Ritzlike operator that can map from one discrete space to another. To define the shift operator, we will use a special bilinear form $\mathscr{A}_{n, m}$. To define $\mathscr{A}_{n, m}$, we will use a partition of $\Om{0}$ into the subdomains
\begin{equation}
\om{{ij}} = \om{{i,n,j,m}} := \Om{{i, n}} \cap \Om{{j, m}} \quad \text{for } i, j = 1, 2 \text{ and } n,m = 0, \dots, N
\label{def:omegaij}
\end{equation}
For $n, m = 0, 1,  \dots, N$, we define the non-symmetric bilinear form $\mathscr{A}_{n, m}$ by
\begin{equation}
\begin{split}
\mathscr{A}_{n, m}(v, w) := \sum_{i, j = 1}^2 (\nab v, \nab w)_{\om{{ij}}} - ([v], \langle \partial_{n} w \rangle)_{\Gamma_{n}} - (\langle \partial_{n} v \rangle, [w])_{\Gamma_{m}} 
\end{split}
\label{def:specAh}
\end{equation}
We define a related energy norm by 
\begin{align}
\normaspecnm{v}^2 &:= \norman{v}^2 + \| \langle \partial_n v \rangle \|_{-1/2,h, \Gamma_{m}}^2 \label{def:normaspecnm}
\end{align}
With this norm, we may obtain the following continuity result:
\begin{lemma}[Continuity of $\mathscr{A}_{n, m}$]\label{lem:Aspeccont}
Let the bilinear form $\mathscr{A}_{n, m}$ be defined by \eqref{def:specAh}, and the norm $\normaspecnm{\cdot}$ by \eqref{def:normaspecnm}. Then for functions $v$ and $w$ of sufficient regularity
\begin{equation}
\mathscr{A}_{n, m}(v, w) \lesssim \normaspecnm{v} \normaspecmn{w} 
\label{lemres:Aspeccont}
\end{equation}

\begin{proof}
The left-hand side of \eqref{lemres:Aspeccont} is given by \eqref{def:specAh}, where the first term is
\begin{equation}
\sum_{i, j = 1}^2 (\nab v, \nab w)_{\om{{ij}}} \leq \norman{v} \normam{w} \leq \normaspecnm{v} \normaspecmn{w}
\label{Aspeccont0_1}
\end{equation}
Here we have used that $v \in H^1(\cup_i \Om{{i, n}})$ and $w \in H^1(\cup_j \Om{{j, m}})$ to merge integrals over $\om{{ij}}$'s to integrals over $\Om{i}$'s, e.g., $\| \nab v \|_{\om{{i1}}}^2 + \| \nab v \|_{\om{{i2}}}^2 = \| \nab v \|_{\Om{{i, n}}}^2$. For the second and third term, we use \eqref{HHnormineqs} followed by the norm definition \eqref{def:normaspecnm}.
\end{proof}
\end{lemma}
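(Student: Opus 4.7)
The plan is to bound each of the three terms in the definition of $\mathscr{A}_{n,m}(v,w)$ separately by Cauchy--Schwarz, exploiting the fact that the subdomains $\omega_{ij}$ partition $\Omega_{i,n}$ (when $j$ varies) and $\Omega_{j,m}$ (when $i$ varies), together with the duality inequality \eqref{HHnormineqs} between the mesh-dependent trace norms on $\Gamma_n$ and $\Gamma_m$. The asymmetry of the special bilinear form — one trace term lives on $\Gamma_n$ and the other on $\Gamma_m$ — is exactly why the two norms $\normaspecnm{\cdot}$ and $\normaspecmn{\cdot}$ carry the extra average terms on the \emph{opposite} interface: the jump of $v$ is tested on $\Gamma_n$ (a natural $n$-quantity absorbed by $\norman{v}$), while the average of $\partial_n w$ must be controlled on $\Gamma_n$ (foreign to $w$), which is what $\normaspecmn{w}$ accounts for. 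The same applies with the roles of $v$ and $w$ swapped on $\Gamma_m$.

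First I would handle the bulk term: write $\sum_{i,j=1}^2(\nabla v, \nabla w)_{\omega_{ij}} \le \bigl(\sum_{i,j}\|\nabla v\|_{\omega_{ij}}^2\bigr)^{1/2}\bigl(\sum_{i,j}\|\nabla w\|_{\omega_{ij}}^2\bigr)^{1/2}$ and observe that since $v \in H^1(\cup_i \Omega_{i,n})$ the inner sum over $j$ telescopes via $\|\nabla v\|_{\omega_{i1}}^2+\|\nabla v\|_{\omega_{i2}}^2 = \|\nabla v\|_{\Omega_{i,n}}^2$, and analogously for $w$ summing over $i$. This bounds the bulk term by $\norman{v}\,\normam{w} \le \normaspecnm{v}\,\normaspecmn{w}$.

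Next I would treat the two trace terms in parallel. For the $\Gamma_n$-term apply \eqref{HHnormineqs} to get $([v],\langle\partial_n w\rangle)_{\Gamma_n} \le \|[v]\|_{1/2,h,\Gamma_n}\,\|\langle\partial_n w\rangle\|_{-1/2,h,\Gamma_n}$; the first factor is controlled by $\norman{v} \le \normaspecnm{v}$ and the second is precisely the extra ingredient in $\normaspecmn{w}$. For the $\Gamma_m$-term the same duality yields $(\langle\partial_n v\rangle,[w])_{\Gamma_m} \le \|\langle\partial_n v\rangle\|_{-1/2,h,\Gamma_m}\,\|[w]\|_{1/2,h,\Gamma_m}$, where now the first factor is the extra ingredient of $\normaspecnm{v}$ and the second is dominated by $\normam{w}\le \normaspecmn{w}$.

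I do not anticipate a genuine obstacle; the argument is essentially bookkeeping with Cauchy--Schwarz. The only subtle point is noticing that the asymmetric choice of interfaces in \eqref{def:specAh} forces the bespoke asymmetric norms \eqref{def:normaspecnm}, which is precisely the reason the $\|\langle\partial_n\cdot\rangle\|_{-1/2,h,\Gamma_\bullet}$ summand is tacked on. After summing the three estimates and invoking $ab+cd+ef \lesssim (a^2+c^2+e^2)^{1/2}(b^2+d^2+f^2)^{1/2}$, the desired bound \eqref{lemres:Aspeccont} follows directly from the definitions of $\normaspecnm{v}$ and $\normaspecmn{w}$.
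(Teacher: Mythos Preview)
Your proposal is correct and follows essentially the same approach as the paper: bound the bulk term via Cauchy--Schwarz and the additivity of $\|\nabla\cdot\|^2$ over the $\omega_{ij}$-partition, then handle each trace term with the duality inequality \eqref{HHnormineqs} and absorb the resulting factors into the bespoke norms \eqref{def:normaspecnm}. Your write-up is in fact more explicit than the paper's about which factor lands in which norm, but the argument is the same.
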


\noindent By restricting $v$ and $w$ in Lemma~\ref{lem:Aspeccont} to the corresponding discrete subspaces, i.e., $V_{h, n}$ and $V_{h, m}$, respectively, we may obtain a continuity result in the weaker $A_n$-norms. This is done by estimating the average terms in the $\mathscr{A}_{n,m}$-norms using an inverse inequality that is a twist on the one in Lemma~\ref{invineqgamlem}. The average term is
\begin{equation}
\| \langle \partial_{n} v \rangle \|_{-1/2,h,\Gamma_{m}}^2 \lesssim \norman{v}^2 + \| \langle \partial_{n} v \rangle \|_{-1/2,h,\Gamma_{m} \setminus \Gamma_{n}}^2
\label{Aspeccont0_211}
\end{equation}
where we also want to estimate the second term by $\norman{v}^2$. We do this by following the proof of Lemma~\ref{invineqgamlem}, omitting some of the steps that are the same. Partitioning $\Gamma_{m} \setminus \Gamma_{n}$ into $\grave{\Gamma}_{i} := (\Gamma_{m} \setminus \Gamma_{n}) \cap \Om{{i,n}}$, using the interdependent indices $i = 1,2$, and $j = 0, G$, and writing $\grave{\Gamma}_{i {K_j}} = K_j \cap \grave{\Gamma}_i$, we have for $v \in V_{h,n}$ that
\begin{equation}
\begin{split}
\| \langle \partial_{n} v \rangle \|_{-1/2,h,\Gamma_{m} \setminus \Gamma_{n}}^2 & \lesssim \sum_{\grave{\Gamma}_{i {K_j}}} \sum_{\sigma \in \{+, -\}} h_{K_j} \| (\nab v)_\sigma \|_{\grave{\Gamma}_{i {K_j}}}^2 \lesssim \sum_{\grave{\Gamma}_{i {K_j}}} \sum_{\sigma \in \{+, -\}} \|\nab v\|_{K_j^\sigma}^2 \\
& \lesssim \|\nab v\|_{\Om{{1,n}}}^2 + \|(\nab v)_1\|_{\Om{{O, n}}}^2 + \|\nab v\|_{\Om{{2, n}}}^2 \\
& \lesssim \sum_{i=1}^2 \|\nab v\|_{\Om{{i,n}}}^2 + \| [\nab v] \|_{\Om{{O,n}}}^2 \lesssim \norman{v}^2
\end{split} 
\label{Aspeccont0_2112}
\end{equation}
where we have used Corollary~\ref{cor:scatraineqGamK}. Using \eqref{Aspeccont0_2112} in \eqref{Aspeccont0_211}, we get for $v \in V_{h,n}$ that
\begin{equation}
\| \langle \partial_n v \rangle \|_{-1/2,h, \Gamma_{m}} \lesssim \norman{v}
\label{Aspecnormeqn_0}
\end{equation}
Using \eqref{Aspecnormeqn_0} in \eqref{def:normaspecnm}, we may obtain
\begin{align}
\norman{v} &\leq \normaspecnm{v} \lesssim \norman{v} \quad \forall v \in V_{h,n} \label{Aspecnormeqnm1}
\end{align}
By restricting the functions in Lemma~\ref{lem:Aspeccont} to the discrete subspaces, we may use the above norm equivalence to obtain the following discrete continuity result:
\begin{corollary}[Discrete continuity of $\mathscr{A}_{n, m}$]\label{cor:Aspecdisccont}
Let the bilinear form $\mathscr{A}_{n, m}$ and the spatial energy norm $\norman{\cdot}$ be defined by \eqref{def:specAh} and \eqref{def:anorm}, respectively. Then
\begin{equation}
\mathscr{A}_{n, m}(v, w) \lesssim \norman{v} \normam{w} \quad \forall v \in V_{h, n} \text{ and } \forall w \in  V_{h, m}
\label{corres:Aspecdisccont}
\end{equation}
\end{corollary}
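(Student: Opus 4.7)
The plan is to derive the corollary directly from Lemma~\ref{lem:Aspeccont} by swapping the $\mathscr{A}_{n,m}$-norms for the weaker $A_n$- and $A_m$-norms on the discrete subspaces, using the norm equivalence \eqref{Aspecnormeqnm1} that has just been established.

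First I would apply Lemma~\ref{lem:Aspeccont} to $v \in V_{h,n}$ and $w \in V_{h,m}$, which gives
\begin{equation*}
\mathscr{A}_{n, m}(v, w) \lesssim \normaspecnm{v} \normaspecmn{w}.
\end{equation*}
For the first factor, the discrete function $v \in V_{h,n}$ satisfies $\normaspecnm{v} \lesssim \norman{v}$ by \eqref{Aspecnormeqnm1}. For the second factor, I would observe that $\normaspecmn{\cdot}$ is of exactly the same structure as $\normaspecnm{\cdot}$ but with the roles of the indices $n$ and $m$ interchanged, since the definition \eqref{def:normaspecnm} is symmetric in the way it pairs the sub-domain index with the interface index. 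The argument leading from \eqref{Aspeccont0_211} to \eqref{Aspecnormeqn_0} only uses discreteness of the function on the slab corresponding to its bulk index together with the inverse trace estimate of Corollary~\ref{cor:scatraineqGamK}, and hence applies verbatim to $w \in V_{h,m}$ to yield $\normaspecmn{w} \lesssim \normam{w}$.

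Combining these two norm equivalences with the continuity estimate from Lemma~\ref{lem:Aspeccont} gives
\begin{equation*}
\mathscr{A}_{n, m}(v, w) \lesssim \normaspecnm{v} \normaspecmn{w} \lesssim \norman{v} \normam{w},
\end{equation*}
which is the desired \eqref{corres:Aspecdisccont}. There is no real obstacle here; the only point requiring a moment of care is to make the symmetry argument for the second factor explicit so that the reader sees that the analogue of \eqref{Aspecnormeqnm1} with $n$ and $m$ swapped is available for $w \in V_{h,m}$.
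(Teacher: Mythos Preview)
Your proposal is correct and matches the paper's approach exactly: the paper states just before the corollary that one restricts the functions in Lemma~\ref{lem:Aspeccont} to the discrete subspaces and applies the norm equivalence \eqref{Aspecnormeqnm1}. Your explicit remark about the $n\leftrightarrow m$ symmetry needed for the second factor is a reasonable clarification of a point the paper leaves implicit.
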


\noindent We are now ready to move on to the shift operator.

\begin{definition}[Shift operator]\label{def:shiftop}
For $n, m = 0, \dots, N$, we define the shift operator $\mathscr{S}_{n,m} : H^{3/2 + \varepsilon}(\cup_i \Om{{i, n}}) \to V_{h, m}$ by
%
\begin{align}
A_{m}(\mathscr{S}_{n,m} v, w) &= \mathscr{A}_{n, m}(v, w) \quad \forall w \in V_{h, m} \label{eqdef:shiftop}
\end{align}
\end{definition}
\noindent Note that $V_{h,n} \subset H^{3/2 + \varepsilon}(\cup_i \Om{{i, n}})$
so that $\mathscr{S}_{n,m} : V_{h,n} \to V_{h,m}$. When it is clear what type of function $\mathscr{S}_{n,m}$ operates on, we write $\mathscr{S}_m = \mathscr{S}_{n,m}$ for brevity. For $v \in H^{3/2 + \varepsilon}(\cup_i \Om{{i, n}})$
, using $\mathscr{S}_m v \in V_{h,m}$, the discrete coercivity of $A_m$, the definition of $\mathscr{S}_m$, and the continuities of $\mathscr{A}_{n, m}$, we get the following stability of the shift operator:
\begin{equation}
\normam{\mathscr{S}_m v} \lesssim \normaspecnm{v} \quad \forall v \in H^{3/2 + \varepsilon}(\cup_i \Om{{i, n}})
\label{shiftop_stability_cont}
\end{equation}
By restricting $v$ in \eqref{shiftop_stability_cont} to $V_{h,n} \subset H^{3/2 + \varepsilon}(\cup_i \Om{{i, n}})$
and using \eqref{Aspecnormeqnm1}, we have 
\begin{equation}
\normam{\mathscr{S}_m v} \lesssim \norman{v} \quad \forall v \in V_{h,n}
\label{shiftop_stability}
\end{equation}
%
%
The shift operator has two approximability properties that are essential for its application in the analysis. We present and prove these properties in the following two lemmas:
\begin{lemma}[An estimate for the shift error]\label{lem:shiftop_error}
Let the shift operator $\mathscr{S}_m = \mathscr{S}_{n,m}$ be defined by \eqref{eqdef:shiftop} and the spatial energy norms $\normaspecnm{\cdot}$ and $\norman{\cdot}$ by \eqref{def:normaspecnm} and \eqref{def:anorm}, respectively. Then
\begin{align}
\| v - \mathscr{S}_m v \|_\Om{0} & \lesssim h \normaspecnm{v} \quad \forall v \in H^{3/2 + \varepsilon}(\cup_i \Om{{i, n}})
\label{lemres:shiftop_error_cont} \\
\| v - \mathscr{S}_m v \|_\Om{0} & \lesssim h \norman{v} \quad \forall v \in V_{h,n}
\label{lemres:shiftop_error}
\end{align}
\begin{proof}
The estimate \eqref{lemres:shiftop_error} follows from \eqref{lemres:shiftop_error_cont} by restricting $v$ to $V_{h,n} \subset H^{3/2 + \varepsilon}(\cup_i \Om{{i, n}})$
and using \eqref{Aspecnormeqnm1}. We thus only need to prove \eqref{lemres:shiftop_error_cont}. The proof is based on the Aubin-Nitsche duality trick but involves a few modifications. Let $\psi = v - \mathscr{S}_m v$ denote the shift error. We consider the auxiliary problem: Find $\phi \in H^2(\Om{0}) \cap H_0^1(\Om{0})$ such that $-\lap{\phi} = \psi \ \text{in} \ \Om{0}$. We note from regularity that $[\partial_n\phi]|_{\Gamma_m \cup \Gamma_{n}} = 0$ in $L^2(\Gamma_m \cup \Gamma_{n})$. We denote by $I_{h, m} = I_{h, p=1, m}$ the spatial interpolation operator $I_{h, t_m}$ defined by (\ref{def:interph}). The square of the left-hand side of (\ref{lemres:shiftop_error}) is
\begin{equation}
\begin{split}
\| v - \mathscr{S}_m v \|_\Om{0}^2 & = (\psi, \psi)_{\Om{0}} = (v, -\lap \phi)_{\Om{0}} - (\mathscr{S}_m v, -\lap \phi)_{\Om{0}} \\
& = \sum_{i=1}^2 (\nab v, \nab \phi)_{\Om{{i,n}}} - ([v], \langle \partial_n \phi \rangle)_{\Gamma_{n}} - A_m(\mathscr{S}_m v, \phi) \\
& = \mathscr{A}_{n, m}(v, \phi) - A_m(\mathscr{S}_m v, \phi) \pm \mathscr{A}_{n, m}(v, I_{h, m} \phi) \\
& = \mathscr{A}_{n, m}(v, \phi - I_{h, m} \phi) - A_m(\mathscr{S}_m v, \phi - I_{h, m} \phi) \\
& \lesssim \normaspecnm{v} \normaspecmn{\phi - I_{h, m} \phi} + \normam{\mathscr{S}_m v}\normam{\phi - I_{h, m} \phi} \\
& \lesssim h \normaspecnm{v} \| D_x^2 \phi \|_{\Om{0}} \lesssim h \normaspecnm{v} \| \psi \|_{\Om{0}}
\end{split}
\label{shiftop_error_0}
\end{equation}
where we have used Lemma~\ref{lem:partintbroksob}, Corollary~\ref{cor:partintbroksob_A}, that $[\phi]|_{\Gamma_m} = 0$ to go to $\mathscr{A}_{n, m}$, the definition of the shift operator, the continuities, Lemma~\ref{lem:interphest_energy_special}, \eqref{shiftop_stability_cont}, Lemma~\ref{lem:interphest_energy}, and elliptic regularity on $H^2(\Om{0}) \cap H_0^1(\Om{0})$ for $\phi$. This shows \eqref{lemres:shiftop_error_cont}. 
\end{proof}
\end{lemma}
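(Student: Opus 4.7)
The plan is to reduce \eqref{lemres:shiftop_error} to \eqref{lemres:shiftop_error_cont}: restricting $v$ to $V_{h,n} \subset H^{3/2+\varepsilon}(\cup_i \Om{{i,n}})$ and applying the norm equivalence \eqref{Aspecnormeqnm1} immediately converts the $\normaspecnm{\cdot}$-bound into a $\norman{\cdot}$-bound. The substantive task is therefore \eqref{lemres:shiftop_error_cont}, which I would prove by the Aubin--Nitsche duality trick, carefully adapted to the fact that the natural function space for $v$ is partitioned by $\Gn$ while that of $\mathscr{S}_m v$ is partitioned by $\Gm$.

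Writing $\psi = v - \mathscr{S}_m v$, I would introduce the dual problem of finding $\phi \in H^2(\Om{0}) \cap H_0^1(\Om{0})$ with $-\lap \phi = \psi$, for which elliptic regularity gives $\| D_x^2 \phi \|_{\Om{0}} \lesssim \| \psi \|_{\Om{0}}$; the global $H^2$ regularity also delivers $[\phi] = [\partial_n \phi] = 0$ on both interfaces $\Gn$ and $\Gm$. Starting from $\| \psi \|_{\Om{0}}^2 = (v, -\lap \phi)_{\Om{0}} - (\mathscr{S}_m v, -\lap \phi)_{\Om{0}}$, I would integrate by parts on each partition separately, using the broken integration-by-parts identity (Lemma~\ref{lem:partintbroksob} and Corollary~\ref{cor:partintbroksob_A}); since $[\phi]=0$ on $\Gn \cup \Gm$, the first term collapses to $\mathscr{A}_{n,m}(v,\phi)$ and the second to $A_m(\mathscr{S}_m v, \phi)$.

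The Galerkin step is to insert $\pm \mathscr{A}_{n,m}(v, I_{h,m}\phi)$ with $I_{h,m}\phi \in V_{h,m}$ and invoke the defining identity \eqref{eqdef:shiftop} of the shift operator to replace $\mathscr{A}_{n,m}(v, I_{h,m}\phi)$ by $A_m(\mathscr{S}_m v, I_{h,m}\phi)$, so that only the interpolation residual $\phi - I_{h,m}\phi$ appears inside each bilinear form. Applying the continuity of $\mathscr{A}_{n,m}$ from Lemma~\ref{lem:Aspeccont} together with the continuity of $A_m$, the shift stability \eqref{shiftop_stability_cont}, and appropriate interpolation estimates produces a factor of $h\,\| D_x^2 \phi \|_{\Om{0}}$; a final appeal to elliptic regularity cancels one factor of $\| \psi \|_{\Om{0}}$ and yields \eqref{lemres:shiftop_error_cont}.

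The main obstacle I anticipate is the asymmetry between $\Gn$ and $\Gm$: the interpolant $I_{h,m}\phi$ is adapted to the partition by $\Gm$, but the continuity of $\mathscr{A}_{n,m}$ demands control of $\phi - I_{h,m}\phi$ in the $\normaspecmn{\cdot}$-norm, which carries the extra average term $\| \langle \partial_n(\phi - I_{h,m}\phi) \rangle \|_{-1/2,h,\Gn}$ evaluated on the ``wrong'' interface $\Gn$. This is precisely what the dedicated interpolation estimate Lemma~\ref{lem:interphest_energy_special} supplies, and its availability is what lets the duality argument close; with only the standard interpolation estimate aligned to $\Gm$, one would have no uniform bound for the average on $\Gn$ and the method of proof would break down.
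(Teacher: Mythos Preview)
Your proposal is correct and follows essentially the same approach as the paper: the same reduction of \eqref{lemres:shiftop_error} to \eqref{lemres:shiftop_error_cont} via \eqref{Aspecnormeqnm1}, the same Aubin--Nitsche dual problem, the same integration by parts via Lemma~\ref{lem:partintbroksob} and Corollary~\ref{cor:partintbroksob_A} to reach $\mathscr{A}_{n,m}(v,\phi) - A_m(\mathscr{S}_m v,\phi)$, the same Galerkin insertion of $I_{h,m}\phi$ using \eqref{eqdef:shiftop}, and the same closing combination of Lemma~\ref{lem:Aspeccont}, \eqref{shiftop_stability_cont}, Lemma~\ref{lem:interphest_energy}, Lemma~\ref{lem:interphest_energy_special}, and elliptic regularity. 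You have also correctly identified the key technical point, namely that the $\normaspecmn{\cdot}$-norm of the interpolation error carries an average term on the ``wrong'' interface $\Gn$, and that Lemma~\ref{lem:interphest_energy_special} is precisely what handles it.
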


\begin{lemma}[An estimate for the shift energy]\label{lem:shiftenergy}
Let the bilinear form $A_n$ be defined by \eqref{Ahdef}, the shift operator $\mathscr{S}_m = \mathscr{S}_{n, m}$ by \eqref{eqdef:shiftop}, the spatial energy norm $\norman{\cdot}$ by \eqref{def:anorm}, and the discrete Laplacian $\lap_n$ by \eqref{dlapdef}. Then
\begin{equation}
A_{m}(\mathscr{S}_m v, \mathscr{S}_m v) - A_{n}(v, v) \lesssim h \norman{v} \| \lap_{n} v \|_{\Om{0}} \quad \forall v \in V_{h, n}
\label{lemres:shiftenergy}
\end{equation}
\begin{proof}
The left-hand side of \eqref{lemres:shiftenergy} is
\begin{equation}
\begin{split}
& \; A_{m}(\mathscr{S}_m v, \mathscr{S}_m v) - A_{n}(v, v) \\
= & \; A_{n}(v, \mathscr{S}_{n}\mathscr{S}_m v - v) = (\lap_{n} v, v - \mathscr{S}_{n} \mathscr{S}_m v)_{\Om{0}} \\
\leq & \; \| \lap_{n} v \|_{\Om{0}} \bigg(\| v - \mathscr{S}_m v \|_{\Om{0}} + \| \mathscr{S}_m v - \mathscr{S}_{n}\mathscr{S}_m v \|_{\Om{0}} \bigg)\\
\lesssim & \; \| \lap_{n} v \|_{\Om{0}} \bigg( h \norman{v} + h \normam{\mathscr{S}_m v} \bigg) \\
\lesssim & \; h \| \lap_{n} v \|_{\Om{0}} \norman{v}
\end{split}
\label{shiftenergy0}
\end{equation}
where we have used \eqref{eqdef:shiftop}, Lemma~\ref{lem:shiftop_error}, and \eqref{shiftop_stability}. This shows \eqref{lemres:shiftenergy}.
\end{proof}
\end{lemma}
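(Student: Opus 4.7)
The plan is to rewrite the left-hand side as an inner product involving $\lap_n v$, at which point a Cauchy-Schwarz estimate combined with the shift error lemma gives the result.

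First I would establish the key algebraic identity
\[
A_m(\mathscr{S}_m v, \mathscr{S}_m v) = A_n(v, \mathscr{S}_n \mathscr{S}_m v).
\]
This comes from applying the defining relation \eqref{eqdef:shiftop} twice. Setting the test function equal to $\mathscr{S}_m v \in V_{h,m}$ gives $A_m(\mathscr{S}_m v, \mathscr{S}_m v) = \mathscr{A}_{n,m}(v, \mathscr{S}_m v)$, and using \eqref{eqdef:shiftop} again with the roles of $n$ and $m$ interchanged (noting that $\mathscr{S}_m v \in V_{h,m} \subset H^{3/2+\varepsilon}(\cup_i \Om{i,m})$) yields $A_n(\mathscr{S}_n \mathscr{S}_m v, v) = \mathscr{A}_{m,n}(\mathscr{S}_m v, v)$. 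A direct inspection of the definition \eqref{def:specAh} shows $\mathscr{A}_{n,m}(v, \mathscr{S}_m v) = \mathscr{A}_{m,n}(\mathscr{S}_m v, v)$ since the volume terms simply exchange $i \leftrightarrow j$ on $\om{ij}$, while the two jump-average terms swap roles. Combining these with the symmetry of $A_n$ then gives the identity.

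Subtracting $A_n(v, v)$ from both sides yields
\[
A_m(\mathscr{S}_m v, \mathscr{S}_m v) - A_n(v, v) = A_n(v, \mathscr{S}_n \mathscr{S}_m v - v).
\]
Since $v \in V_{h,n}$ and $\mathscr{S}_n \mathscr{S}_m v - v \in V_{h,n}$, the defining relation \eqref{dlapdef} for the discrete Laplacian converts this into $(-\lap_n v, \mathscr{S}_n \mathscr{S}_m v - v)_{\Om{0}}$. I then apply Cauchy-Schwarz and the triangle inequality to split
\[
\| \mathscr{S}_n \mathscr{S}_m v - v \|_{\Om{0}} \leq \| \mathscr{S}_m v - v \|_{\Om{0}} + \| \mathscr{S}_n (\mathscr{S}_m v) - \mathscr{S}_m v \|_{\Om{0}}.
\]

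Each of the two error terms is controlled by $h \norman{v}$: the first directly by Lemma~\ref{lem:shiftop_error} applied to $v \in V_{h,n}$, and the second by Lemma~\ref{lem:shiftop_error} applied to $\mathscr{S}_m v \in V_{h,m}$ giving a bound $h \normam{\mathscr{S}_m v}$, which is then reduced to $h \norman{v}$ by the shift stability \eqref{shiftop_stability}. Assembling the estimates delivers \eqref{lemres:shiftenergy}.

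The main obstacle is the first step: verifying carefully that the repeated application of the shift definition produces the desired identity $A_m(\mathscr{S}_m v, \mathscr{S}_m v) = A_n(v, \mathscr{S}_n \mathscr{S}_m v)$. The subtlety lies in checking that $\mathscr{A}_{n,m}$ and $\mathscr{A}_{m,n}$ are symmetric to each other in the required sense — i.e., swapping the arguments exactly swaps the interface roles of $\Gamma_n$ and $\Gamma_m$ — so that the two applications of the defining relation compose correctly. Once this identity is in hand, the remainder is a routine duality and stability argument using the already-established lemmas.
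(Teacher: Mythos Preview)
Your proposal is correct and follows essentially the same route as the paper's proof: both establish the identity $A_m(\mathscr{S}_m v,\mathscr{S}_m v)=A_n(v,\mathscr{S}_n\mathscr{S}_m v)$ via the defining relation \eqref{eqdef:shiftop}, convert to $(-\lap_n v,\mathscr{S}_n\mathscr{S}_m v - v)_{\Om{0}}$, and finish with Cauchy--Schwarz, the triangle inequality, Lemma~\ref{lem:shiftop_error} twice, and \eqref{shiftop_stability}. Your explicit verification of the symmetry $\mathscr{A}_{n,m}(v,w)=\mathscr{A}_{m,n}(w,v)$ fills in a step the paper leaves implicit, but the argument is otherwise identical.
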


\subsection{The bilinear form $B_h$}
The bilinear form $B_h$ can be expressed differently, as noted in the following lemma: 

\begin{lemma}[Alternative form of $B_h$] \label{lem:Bhpit} 
The bilinear form $B_h$, defined by \eqref{Bhdef}, can be written as
\begin{equation}
\begin{split}
B_h(w, v) = \sum_{n=1}^N \int_{I_n} (w,- \dot v)_{\Om{0}} + A_{h,t}(w, v) \ud t + \sum_{n=1}^{N-1}(w_{n}^-,-[v]_{n})_{\Om{0}} + (w_{N}^-, v_{N}^-)_{\Om{0}}  
\end{split} \label{lemres:Bhpit}
\end{equation}

\begin{proof}
The proof is exactly as in the standard case. The first term in (\ref{Bhdef}) is integrated by parts with respect to time and the result is combined with the last two terms in (\ref{Bhdef}).
\end{proof}
\end{lemma}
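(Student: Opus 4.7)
The plan is to perform integration by parts in time on the first term of the original definition \eqref{Bhdef} slab by slab, and then rearrange the resulting pointwise-in-time boundary contributions together with the existing jump and initial terms to obtain \eqref{lemres:Bhpit}. The spatial bilinear form $A_{h,t}$ and the time integrals against $A_{h,t}$ are untouched, so everything reduces to bookkeeping on the material time derivative.

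First I would write, on each slab $I_n = (t_{n-1}, t_n]$, the identity
\begin{equation*}
\int_{I_n} (\dot w, v)_{\Om{0}} \ud t = (w_n^-, v_n^-)_{\Om{0}} - (w_{n-1}^+, v_{n-1}^+)_{\Om{0}} - \int_{I_n} (w, \dot v)_{\Om{0}} \ud t,
\end{equation*}
which is just standard integration by parts in $t$ using the one-sided limits at the slab endpoints (the spatial domain is fixed on $I_n$ thanks to the simple discontinuous mesh evolution, so no additional domain-velocity terms arise). Summing this identity over $n = 1, \dots, N$ telescopes the boundary contributions into $(w_N^-, v_N^-)_{\Om{0}} - (w_0^+, v_0^+)_{\Om{0}}$ plus the interior pairs $(w_n^-, v_n^-)_{\Om{0}} - (w_n^+, v_n^+)_{\Om{0}}$ for $n = 1, \dots, N-1$.

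Next I would add back the jump and initial terms from \eqref{Bhdef}, namely $\sum_{n=1}^{N-1}(w_n^+ - w_n^-, v_n^+)_{\Om{0}} + (w_0^+, v_0^+)_{\Om{0}}$. The initial term cancels against the $-(w_0^+, v_0^+)_{\Om{0}}$ produced by the telescoping, and at each interior node the combination
\begin{equation*}
(w_n^-, v_n^-)_{\Om{0}} - (w_n^+, v_n^+)_{\Om{0}} + (w_n^+ - w_n^-, v_n^+)_{\Om{0}} = (w_n^-, v_n^- - v_n^+)_{\Om{0}} = (w_n^-, -[v]_n)_{\Om{0}}
\end{equation*}
emerges, which is exactly the form required by \eqref{lemres:Bhpit}. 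Collecting the surviving terms with the unchanged $A_{h,t}$-integrals gives the claimed identity.

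There is no real obstacle here: the argument is purely algebraic once the slabwise integration by parts is written down. The only point worth noting is that, because the mesh evolution is piecewise constant on the $I_n$, the one-sided limits $w_n^\pm$ and $v_n^\pm$ make sense in $L^2(\Om{0})$ and no Reynolds-type term appears, so the computation is identical to the classical discontinuous Galerkin-in-time case.
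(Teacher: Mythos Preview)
Your proof is correct and follows exactly the approach indicated in the paper: integrate the first term of \eqref{Bhdef} by parts in time on each slab and combine the resulting endpoint contributions with the jump and initial terms. The paper's proof is just a one-line sketch of this; your version simply spells out the telescoping and the interior-node cancellation explicitly.
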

\noindent To show Galerkin orthogonality, we need the following lemma on consistency:
\begin{lemma}[Consistency] \label{cnstylem}  
The solution $u$ to problem \eqref{heateq} also solves \eqref{feform}.
\begin{proof}
First insert $u$ in place of $u_h$ on the left-hand side of \eqref{feform} and use the regularity of $u$. Then integrate by parts in space to get interior and boundary terms. The exterior boundary terms vanish because of the boundary conditions imposed on $v$ thus leaving the $\Gamma$-terms which are combined. Applying Lemma~\ref{jmplem} and the regularity of $u$ only leaves terms which from \eqref{heateq} equals the right-hand side of \eqref{feform}.
\end{proof}
\end{lemma}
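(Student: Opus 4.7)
The plan is to substitute the PDE solution $u$ into $B_h(\cdot, v)$ and show that each block of terms either vanishes by regularity of $u$ or collapses via integration by parts to a bulk integral involving $\dot u - \lap u$, which then equals $f$ by \eqref{heateq}.

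First, I would dispose of the time-jump terms. From the parabolic regularity inherited from $u_0 \in H^2(\Om{0}) \cap H_0^1(\Om{0})$ and $f \in L^2$, the solution $u$ is continuous in time across every interior node, so $[u]_n = 0$ for $n = 1, \dots, N-1$, and $u_0^+ = u_0$. Hence the third term in \eqref{Bhdef} drops out and the fourth reduces to $(u_0, v_0^+)_{\Om{0}}$, which is already the second piece of the right-hand side of \eqref{feform}.

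Second, I would reduce $A_{h,t}(u, v)$. Because $u \in H^2(\Om{0})$ across the interface, the spatial jumps $[u]$ on $\Gt$ and $[\nab u]$ on $\Omt{O}$ both vanish, and the convex-weighted average of the conormal derivative collapses to $\langle \partial_n u \rangle = \partial_n u$. Thus only the bulk gradient term and one consistency term survive:
\begin{equation*}
A_{h,t}(u, v) = \sum_{i=1}^2 (\nab u, \nab v)_{\Omt{i}} - (\partial_n u, [v])_{\Gt}.
\end{equation*}
Integrating by parts in space on each $\Omt{i}$, the contribution on $\partial\Om{0}$ vanishes because $v_0|_{\partial\Om{0}} = 0$ by construction of $V_{h,0}$, and $\partial\Omt{2} = \Gt$ has no exterior portion. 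The outward normals of $\Omt{1}$ and $\Omt{2}$ on $\Gt$ point in opposite directions, so the two surface contributions combine to $(\partial_n u, v_1 - v_2)_{\Gt} = (\partial_n u, [v])_{\Gt}$, which exactly cancels the consistency term. We are left with $A_{h,t}(u, v) = -(\lap u, v)_{\Om{0}}$.

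Finally, inserting these reductions into \eqref{Bhdef} and invoking the strong form $\dot u - \lap u = f$,
\begin{equation*}
B_h(u, v) = \sum_{n=1}^N \int_{I_n} (\dot u - \lap u, v)_{\Om{0}} \ud t + (u_0, v_0^+)_{\Om{0}} = \int_0^T (f, v)_{\Om{0}} \ud t + (u_0, v_0^+)_{\Om{0}},
\end{equation*}
which is the right-hand side of \eqref{feform}. The only mildly delicate step is keeping the normal orientations and jump signs consistent during the $\Gt$-cancellation; once that bookkeeping is done, the argument is entirely routine and essentially identical to the standard DG-in-time, Nitsche-in-space consistency proof.
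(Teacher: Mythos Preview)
Your proof is correct and follows essentially the same route as the paper: insert $u$, drop the time-jump terms by temporal continuity, use regularity of $u$ to kill the $[u]$ and $[\nab u]$ contributions in $A_{h,t}$, integrate by parts in space, and cancel the $\Gt$-terms against the surviving Nitsche consistency term. The only cosmetic difference is that the paper packages the $\Gt$-cancellation through the jump identity Lemma~\ref{jmplem} (applied to $[\partial_n u\, v]$, where $[\partial_n u]=0$ leaves $\langle \partial_n u\rangle[v]$), whereas you compute the same cancellation directly by noting that $\partial_n u$ is single-valued; both arrive at the identical identity $A_{h,t}(u,v) = -(\lap u, v)_{\Om{0}}$.
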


\noindent From Lemma~\ref{cnstylem} we may obtain the Galerkin orthogonality:
\begin{corollary}[Galerkin orthogonality]\label{cor:galort}
Let the bilinear form $B_h$ be defined by \eqref{Bhdef}, and let $u$ and $u_h$ be the solutions of \eqref{heateq} and \eqref{feform}, respectively. Then
\begin{equation}
B_h(u - u_h, v) = 0 \quad \forall v \in V_h \label{galort}
\end{equation}
\end{corollary}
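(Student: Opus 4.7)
The plan is to derive the result as an immediate consequence of the consistency statement in Lemma~\ref{cnstylem} combined with the definition of $u_h$ in \eqref{feform}. Specifically, since $u$ is the solution of the continuous heat equation \eqref{heateq}, Lemma~\ref{cnstylem} guarantees that
\begin{equation*}
B_h(u, v) = \int_0^T (f, v)_{\Omega_0} \ud t + (u_0, v_0^+)_{\Omega_0} \quad \forall v \in V_h,
\end{equation*}
while by the very definition of the discrete solution,
\begin{equation*}
B_h(u_h, v) = \int_0^T (f, v)_{\Omega_0} \ud t + (u_0, v_0^+)_{\Omega_0} \quad \forall v \in V_h.
\end{equation*}
Subtracting these two identities and exploiting that $B_h$ is linear in its first argument then yields $B_h(u - u_h, v) = 0$ for all $v \in V_h$, which is precisely \eqref{galort}.

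The only thing that needs a small sanity check is that the subtraction is legal, i.e.\ that $B_h(u, \cdot)$ and $B_h(u_h, \cdot)$ are individually well-defined so that linearity may be applied. This is guaranteed by the regularity of $u$ stemming from the data $u_0 \in H^2(\Omega_0) \cap H_0^1(\Omega_0)$ and $f \in L^2((0,T], \Omega_0)$, which places $u(\cdot,t)$ in the domain $H^{3/2 + \varepsilon}(\cup_i \Omega_i(t))$ of $A_{h,t}$ and makes $\dot u$ well defined in $L^2(\Omega_0)$; moreover, by the continuity of $u$ across time slabs the jumps $[u]_n$ vanish, so that both forms in \eqref{Bhdef} evaluate unambiguously. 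No real obstacle is expected here — the corollary is essentially a one-line consequence of Lemma~\ref{cnstylem} and the bilinearity of $B_h$.
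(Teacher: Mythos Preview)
Your argument is correct and matches the paper's approach exactly: the corollary is stated immediately after Lemma~\ref{cnstylem} with the remark that it follows from consistency, and your subtraction of \eqref{feform} from the identity in Lemma~\ref{cnstylem} together with linearity of $B_h$ in the first slot is precisely that derivation.
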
 
\noindent We now consider the discrete dual problem: Find $z_h \in V_h$ such that
\begin{equation}
B_h(v, z_h) = (v_N^-, z_{h,N}^+)_{\Om{0}} \quad \forall v \in V_h \label{discdp}
\end{equation}

\section{Stability analysis} \label{secstaban}

The stability analysis in this section is based on a stability analysis for the case with only a background mesh, presented by Eriksson and Johnson in~\cite{Eriksson1991, Eriksson1995}. Due to the CutFEM setting, the original analysis has been slightly modified by the incorporation of the shift operator defined by \eqref{eqdef:shiftop}. The main result of this section is the following stability estimate and its counterpart for the discrete dual problem:

\begin{theorem}[The main stability estimate] \label{stabestmainthm}

Let $u_h$ be the solution of \eqref{feform} with $f \equiv 0$ and let $u_0$ be the initial value of the analytic solution of the problem presented in Section \ref{sec:probform}. Then we have that
\begin{equation}
\| u_{h,N}^- \|_{\Om{0}} + \sum_{n=1}^N \int_{I_n} \| \dot{u}_h \|_{\Om{0}} + \| \lap_{n} u_h \|_{\Om{0}} \ud t + \sum_{n=1}^N \| [u_h]_{n-1} \|_{\Om{0}} \leq C_1 \| u_0 \|_{\Om{0}}
\label{stabestmain}
\end{equation}
where $C_1 = C(\log(t_N/k_1) + 1)^{1/2}$ and $C > 0$ is a constant.

\end{theorem}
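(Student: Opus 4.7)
The plan is to follow the Eriksson--Johnson DG-in-time stability methodology of~\cite{Eriksson1991, Eriksson1995}, modified to accommodate the slabwise change of $V_{h,n}$ and $A_n$ through the shift operator $\mathscr{S}_{n,m}$. The estimate splits naturally into an $L^2$-stability of $\|u_{h,N}^-\|_{\Om{0}}$, obtained without a logarithm, and a strong-stability part providing $\int \|\dot u_h\|_{\Om{0}} + \|\lap_n u_h\|_{\Om{0}}\,\ud t$ together with $\sum \|[u_h]_{n-1}\|_{\Om{0}}$, which carries the logarithmic weight; the two pieces are then summed to produce \eqref{stabestmain}.

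For the $L^2$-stability, I would test $B_h(u_h, u_h) = (u_0, u_{h,0}^+)_{\Om{0}}$ and combine the definition~\eqref{Bhdef} of $B_h$ with the alternative form from Lemma~\ref{lem:Bhpit}. Adding the two representations and halving produces the standard DG identity
\begin{equation*}
\tfrac{1}{2}\|u_{h,N}^-\|_{\Om{0}}^2 + \tfrac{1}{2}\|u_{h,0}^+\|_{\Om{0}}^2 + \tfrac{1}{2}\sum_{n=1}^{N-1}\|[u_h]_n\|_{\Om{0}}^2 + \int_0^T A_{h,t}(u_h,u_h)\,\ud t = (u_0, u_{h,0}^+)_{\Om{0}},
\end{equation*}
and the coercivity of Lemma~\ref{Ahtcoerlem} together with Cauchy--Schwarz--Young on the right yields both $\|u_{h,N}^-\|_{\Om{0}} \leq \|u_0\|_{\Om{0}}$ and an $\int_0^T \norma{u_h}^2\,\ud t$ bound.

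The strong-stability step proceeds slabwise. Using the slab equation
\begin{equation*}
\int_{I_n}(\dot u_h, v)_{\Om{0}} + A_n(u_h, v)\,\ud t + ([u_h]_{n-1}, v_{n-1}^+)_{\Om{0}} = 0 \quad \forall v \in V_h^n,
\end{equation*}
I would test with $v = \dot u_h$ and with $v = \lap_n u_h$ (via~\eqref{dlapdef}) to control $\int_{I_n}\|\dot u_h\|_{\Om{0}}$, $\int_{I_n}\|\lap_n u_h\|_{\Om{0}}$, and $\|[u_h]_{n-1}\|_{\Om{0}}$ in terms of the slabwise initial datum $u_{h,n-1}^-$. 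The obstacle is that $u_{h,n-1}^- \in V_{h,n-1}$ lies outside the domain of $A_n$ and $\lap_n$, so the jump term cannot be treated as in the single-mesh analysis. This is where the shift operator enters: one replaces $u_{h,n-1}^-$ by $\mathscr{S}_n u_{h,n-1}^- \in V_{h,n}$ in the $A_n$-arguments and controls the resulting defects via Lemma~\ref{lem:shiftop_error} and Lemma~\ref{lem:shiftenergy}. The $h\norman{\cdot}\|\lap_n\cdot\|_{\Om{0}}$-remainder from Lemma~\ref{lem:shiftenergy} is absorbed on the left-hand side using the space-time quasi-uniformity~\eqref{quasiuniformity_st}.

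Finally, the logarithmic factor is obtained by pairing the slabwise strong stability with the discrete dual problem~\eqref{discdp}: the dual solution acts as a mollifier for $u_{h,N}^-$, and summing the slab estimates against weights derived from the quasi-uniform partition $\{k_n\}$ yields the factor $(\log(t_N/k_1)+1)^{1/2}$ in the same way as on a single background mesh. The main obstacle I anticipate is the bookkeeping in the slabwise step: the shift couples $A_{n-1}$- and $A_n$-energies and must telescope cleanly across $n$ rather than accumulate, which is precisely why the paper adopts the stricter condition $h \lesssim k_{\min}$ instead of the weaker $h^2 \lesssim k_{\min}$ of~\cite{Eriksson1995}.
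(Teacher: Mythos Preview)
Your first two stages match the paper: the $L^2$-stability by testing with $u_h$ is exactly Lemma~\ref{stablem1}, and the strong stability via $\lap_n u_h$ and $\dot u_h$ together with the shift operator $\mathscr{S}_{n,n+1}$ to repair $u_{h,n}^-\notin V_{h,n+1}$ is the content of Lemma~\ref{stablem2}. Two details are missing there. First, the test functions carry explicit time weights: the paper takes $v=-t_n\lap_n u_h$ and $v=(t-t_{n-1})\dot u_h$, so that the strong estimate reads $\sum_n t_n\int_{I_n}\|\lap_n u_h\|_{\Om{0}}^2+\|\dot u_h\|_{\Om{0}}^2\,\ud t + \sum_{n\ge 2} (t_n/k_n)\|[u_h]_{n-1}\|_{\Om{0}}^2 \lesssim \|u_0\|_{\Om{0}}^2$. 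Without the $t_n$-weight the telescoping across slabs does not close, because the shift-operator remainders from Lemma~\ref{lem:shiftenergy} are absorbed using $h\lesssim k_n \le t_n$, not just $h\lesssim k_n$. Second, the jump $\|[u_h]_{n-1}\|_{\Om{0}}$ is controlled by testing with $v=P_n[u_h]_{n-1}$ (the $L^2$-projection, not the shift operator); the defect $(\mathds{1}-P_n)[u_h]_{n-1}$ is then estimated via Lemma~\ref{lem:shiftop_error}.

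The genuine gap is your final step. The discrete dual problem~\eqref{discdp} plays no role in the proof of Theorem~\ref{stabestmainthm}; it is used only later in the error analysis. The logarithmic factor $(\log(t_N/k_1)+1)^{1/2}$ comes instead from passing from the \emph{weighted} $\ell^2$-type bound of Lemma~\ref{stablem2} to the unweighted $\ell^1$-type sums in \eqref{stabestmain} via the Cauchy--Schwarz inequality
\[
\Big(\sum_{n=1}^N \|w\|\Big)^2 \le \Big(\sum_{n=1}^N \frac{k_n}{t_n}\Big)\Big(\sum_{n=1}^N \frac{t_n}{k_n}\|w\|^2\Big),
\]
together with the elementary bound $\sum_n k_n/t_n \le 1+\log(t_N/k_1)$. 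Your proposal as written never produces a weighted estimate of the form $\sum_n (t_n/k_n)\|\cdot\|^2$, so there is nothing for Cauchy--Schwarz to act on, and invoking the dual problem would not supply it either.
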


\noindent The counterpart of (\ref{stabestmain}) for $z_h$ is a crucial tool in the proof of the a priori error estimate presented in Theorem~\ref{aprithm}. For the purpose of that application, we replace the initial time jump term. From (\ref{stabestmain}), we have that $\| [u_h]_{0} \|_{\Om{0}} \leq C_1 \| u_0 \|_{\Om{0}}$. The corresponding inequality for $z_h$ is $\| [z_h]_{N} \|_{\Om{0}} \leq C_N \| z_{h,N}^+ \|_{\Om{0}}$, where $C_N = C(\log(t_N/k_N) + 1)^{1/2}$ and $C > 0$. By squaring both sides of this inequality and expanding the left-hand side we may obtain $\| z_{h,N}^- \|_{\Om{0}} \leq C_N \| z_{h,N}^+ \|_{\Om{0}}$,
%
%
%
%
which we use in the corresponding stability estimate for $z_h$:

\begin{corollary}[A stability estimate for $z_h$]  \label{stabestmainzhcor}

A corresponding stability estimate to \eqref{stabestmain} for the solution $z_h$ to the discrete dual problem \eqref{discdp} is 
\begin{equation}
\| z_{h,0}^+ \|_{\Om{0}} + \sum_{n=1}^N \int_{I_n} \| \dot{z}_h \|_{\Om{0}} + \| \lap_{n} z_h \|_{\Om{0}} \ud t + \sum_{n=1}^{N-1} \| [z_h]_{n} \|_{\Om{0}} + \| z_{h,N}^- \|_{\Om{0}} \leq C_N \| z_{h,N}^+ \|_{\Om{0}}
\label{stabestmainzh}
\end{equation}
where $C_N = C(\log(t_N/k_N) + 1)^{1/2}$ and $C > 0$ is a constant.

\end{corollary}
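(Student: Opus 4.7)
The plan is to reduce the estimate to Theorem~\ref{stabestmainthm} by a time-reversal argument, and then to upgrade the resulting final-jump bound to the one-sided bound on $\| z_{h,N}^- \|_{\Om{0}}$ via the squaring identity indicated in the paragraph preceding the corollary. Setting $\tilde z_h(x,t) := z_h(x, T-t)$ on the reversed partition $\tilde I_n := (T - t_{N-n+1}, T - t_{N-n}]$ with reversed slabwise meshes $\tilde{\mathcal{T}}_G|_{\tilde I_n} := \mathcal{T}_G|_{I_{N-n+1}}$, and rewriting the discrete dual problem $B_h(v, z_h) = (v_N^-, z_{h,N}^+)_{\Om{0}}$ via the alternative representation of $B_h$ in Lemma~\ref{lem:Bhpit}, the change of variable $\tau = T - t$ shows that $\tilde z_h$ solves a problem of the form \eqref{feform} on the reversed partition with $f \equiv 0$ and initial datum $z_{h,N}^+$. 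The symmetry of $A_{h,t}$ together with the slabwise-constant structure in \eqref{slabwise_s_space} and \eqref{Ahndef} ensures that the reversed mesh evolution is again of simple discontinuous type and that all discrete operators and energy norms on slab $I_{N-n+1}$ map to their counterparts on $\tilde I_n$.

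Applying Theorem~\ref{stabestmainthm} to $\tilde z_h$ then yields a bound with constant $C(\log(t_N/\tilde k_1)+1)^{1/2} = C_N$, since $\tilde k_1 = k_N$. Translating back via $t = T - \tau$ converts $\tilde z_{h,N}^-$ into $z_{h,0}^+$, identifies $\tilde \Delta_n \tilde z_h$ with $\Delta_{N-n+1} z_h$ so that the space-time integral is preserved under the reindexing $n \leftrightarrow N-n+1$, and maps the $n = 2, \dots, N$ jumps $[\tilde z_h]_{n-1}$ onto jumps of $z_h$ at interior times $t_1, \dots, t_{N-1}$ (signs are absorbed by the norms), while the $n = 1$ term becomes
\begin{equation*}
\| [\tilde z_h]_0 \|_{\Om{0}} = \| \tilde z_{h,0}^+ - \tilde z_{h,0}^- \|_{\Om{0}} = \| z_{h,N}^- - z_{h,N}^+ \|_{\Om{0}} = \| [z_h]_N \|_{\Om{0}}.
\end{equation*}
This reproduces the left-hand side of \eqref{stabestmainzh} except with $\| [z_h]_N \|_{\Om{0}}$ in place of $\| z_{h,N}^- \|_{\Om{0}}$; in particular $\| [z_h]_N \|_{\Om{0}} \le C_N \| z_{h,N}^+ \|_{\Om{0}}$.

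To finish, squaring this last inequality and expanding the left-hand side as $\| z_{h,N}^- \|_{\Om{0}}^2 - 2(z_{h,N}^-, z_{h,N}^+)_{\Om{0}} + \| z_{h,N}^+ \|_{\Om{0}}^2$, applying Cauchy--Schwarz (with a suitable Young parameter) to the cross term, and solving the resulting quadratic in $\| z_{h,N}^- \|_{\Om{0}}$ gives $\| z_{h,N}^- \|_{\Om{0}} \lesssim C_N \| z_{h,N}^+ \|_{\Om{0}}$. Combined with the bound from the preceding step, this yields \eqref{stabestmainzh}. The main obstacle is the bookkeeping in the time-reversal step: verifying that the reversed mesh evolution is of simple discontinuous type and that every jump, trace, and discrete operator matches up symmetrically with its forward counterpart, so that Theorem~\ref{stabestmainthm} applies verbatim. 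Once this is settled, the remainder is a direct quotation of that theorem and the short algebraic manipulation just described.
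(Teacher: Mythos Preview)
Your proposal is correct and follows essentially the same approach as the paper. The paper treats the corollary as an immediate ``corresponding'' consequence of Theorem~\ref{stabestmainthm} without spelling out the time-reversal, and then performs exactly the squaring-and-expanding manipulation you describe to replace $\|[z_h]_N\|_{\Om{0}}$ by $\|z_{h,N}^-\|_{\Om{0}}$; your write-up simply makes the implicit duality/time-reversal step (via Lemma~\ref{lem:Bhpit} and the slabwise-constant mesh structure) explicit.
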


\noindent To prove Theorem~\ref{stabestmainthm}, we need two other stability estimates. We start by letting $f \equiv 0$ in (\ref{feform}). We have: Find $u_h \in V_h$ such that 
\begin{equation}
\sum_{n=1}^N \int_{I_n} (\dot{u}_h, v)_{\Om{0}} \ud t + \sum_{n=1}^N \int_{I_n} A_{n}(u_h, v) \ud t + \sum_{n=1}^N ([u_h]_{n-1}, v_{n-1}^+)_{\Om{0}} = 0 \quad \forall v \in V_h      
\label{feformf0}
\end{equation}
The first of the two auxiliary stability estimates is:

\begin{lemma}[The basic stability estimate] \label{stablem1}
Let $u_h$ be the solution of \eqref{feform} with $f \equiv 0$ and let $u_0$ be the initial value of the exact solution $u$. Then
\begin{equation}
 \| u_{h,N}^- \|_{\Om{0}}^2 + \sum_{n=1}^N \int_{I_n} \norman{u_h}^2 \ud t + \sum_{n = 1}^N \| [u_h]_{n-1} \|_{\Om{0}}^2 \lesssim \| u_0 \|_{\Om{0}}^2
\label{stablem1res}
\end{equation}
\begin{proof}
The proof follows the same idea as in the standard case. By taking $v = u_h \in V_h$ in \eqref{feformf0}, integrating the first term and combining the result with the third term, estimate \eqref{stablem1res} follows after using Lemma~\ref{Ahtcoerlem} on the second term.

\end{proof}

\end{lemma}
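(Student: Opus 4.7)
I would follow the classical dG-in-time energy argument, testing \eqref{feformf0} with $v = u_h \in V_h$ and combining the three resulting terms into a telescoping identity. The key slabwise inputs are (i) the product-rule identity
\begin{equation*}
\int_{I_n}(\dot u_h, u_h)_{\Om{0}}\ud t = \tfrac{1}{2}\|u_{h,n}^-\|_{\Om{0}}^2 - \tfrac{1}{2}\|u_{h,n-1}^+\|_{\Om{0}}^2,
\end{equation*}
which is valid because the simple discontinuous evolution keeps $\mathcal{T}_G$ (and hence the space $V_h(t)$) fixed on each $I_n$ via \eqref{slabwise_s_space}, so no transport correction appears, and (ii) the polarization identity
\begin{equation*}
([u_h]_{n-1}, u_{h,n-1}^+)_{\Om{0}} = \tfrac{1}{2}\|u_{h,n-1}^+\|_{\Om{0}}^2 - \tfrac{1}{2}\|u_{h,n-1}^-\|_{\Om{0}}^2 + \tfrac{1}{2}\|[u_h]_{n-1}\|_{\Om{0}}^2,
\end{equation*}
where $u_{h,0}^- := u_0$ is read off from the initial-condition term in \eqref{feform} (rewriting \eqref{feform} in the form \eqref{feformf0} absorbs the $(u_0, v_0^+)_{\Om{0}}$ contribution into the $n=1$ jump).

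Next, I would add these two identities and sum over $n = 1, \dots, N$. The $\tfrac{1}{2}\|u_{h,n-1}^+\|_{\Om{0}}^2$ contributions cancel, the $\tfrac{1}{2}\|u_{h,n}^-\|_{\Om{0}}^2$ contributions telescope, and what remains is
\begin{equation*}
\tfrac{1}{2}\|u_{h,N}^-\|_{\Om{0}}^2 + \tfrac{1}{2}\sum_{n=1}^N\|[u_h]_{n-1}\|_{\Om{0}}^2 - \tfrac{1}{2}\|u_0\|_{\Om{0}}^2.
\end{equation*}
For the stiffness term, I would apply the discrete coercivity Lemma~\ref{Ahtcoerlem} on each slab (with $\gamma$ chosen large enough) to obtain $\int_{I_n} A_n(u_h, u_h)\ud t \gtrsim \int_{I_n}\norman{u_h}^2\ud t$. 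Substituting into \eqref{feformf0} with $v = u_h$ and moving $\tfrac{1}{2}\|u_0\|_{\Om{0}}^2$ to the right-hand side yields \eqref{stablem1res}.

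There is no serious obstacle here; the argument is entirely routine once the dG-in-time machinery is in place. The only points worth care are the convention $u_{h,0}^- = u_0$ in the polarization step and the absence of a mesh-motion correction in the time-derivative identity. Both are exactly the simplifications afforded by the simple discontinuous mesh evolution: $V_h(t)$ and the $L^2(\Om{0})$-inner product are independent of $t$ on each $I_n$, so the classical chain rule and energy identity apply unmodified and the more delicate shift-operator technology developed earlier is not needed for this basic estimate.
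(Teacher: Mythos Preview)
Your proposal is correct and follows exactly the approach sketched in the paper: test \eqref{feformf0} with $v=u_h$, integrate the time-derivative term, combine with the jump terms via the polarization identity to obtain a telescoping sum, and invoke Lemma~\ref{Ahtcoerlem} on the stiffness term. Your added remarks about the convention $u_{h,0}^- = u_0$ and the absence of any mesh-motion correction (thanks to the slabwise-constant evolution) are the right points of care, and nothing further is needed.
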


\begin{lemma}[The strong stability estimate] \label{stablem2}
Let $u_h$ be the solution of \eqref{feform} with $f \equiv 0$ and let $u_0$ be the initial value of the exact solution $u$. Then
\begin{equation}
\sum_{n=1}^N t_n \int_{I_n} \| \dot{u}_h \|_{\Om{0}}^2 + \| \lap_{n} u_h \|_{\Om{0}}^2\ud t + \sum_{n=2}^N \frac{t_n}{k_n} \| [u_h]_{n-1} \|_{\Om{0}}^2 \lesssim \| u_0 \|_{\Om{0}}^2
\label{stablem2res}
\end{equation}

\begin{proof}
%
By taking $v = -t_n\lap_{n}u_h \in V_h$ in (\ref{feformf0}) and using \eqref{dlapdef}, we may obtain
\begin{equation}
\begin{split}
& \sum_{n=1}^N t_n\int_{I_n} \| \lap_{n}u_h \|_\Om{0}^2 \ud t + \frac{t_1}{2}A_{1}(u_{h,0}^+, u_{h,0}^+) + \frac{t_N}{2}A_{N}(u_{h,N}^-, u_{h,N}^-) \\
 = & \; t_1(u_{h,0}^-, -\lap_{1}u_{h,0}^+)_{\Om{0}} \\
 & + \sum_{n=1}^{N-1}\bigg(t_{n+1}(u_{h,n}^-, -\lap_{n+1}u_{h,n}^+)_{\Om{0}} - \frac{t_{n+1}}{2}A_{n+1}(u_{h,n}^+, u_{h,n}^+) - \frac{t_n}{2}A_{n}(u_{h,n}^-, u_{h,n}^-) \bigg) \\
\end{split}
\label{stablem2dlapmid}
\end{equation}
%
%
The first term on the left-hand side is done. Using Lemma~\ref{Ahtcoerlem}, the other two terms on the left-hand side are estimated from below by $0$. Using standard estimates, the first term on the right-hand side is
\begin{equation}
t_1(u_{h,0}^-, -\lap_{1}u_{h,0}^+)_{\Om{0}} \lesssim \frac{1}{\varepsilon}\| u_{0} \|_{\Om{0}}^2 + \varepsilon t_1 \int_{I_1} \| \lap_{1}u_{h}\|_{\Om{0}}^2 \ud t
\label{stablem2dlapmid_rhsn1}
\end{equation}
We move on to the last row of \eqref{stablem2dlapmid}. We would like to use \eqref{dlapdef} on the first term, but due to the simple discontinuous evolution of $\mathcal{T}_G$, $u_{h,n}^- \notin V_{h,n+1}$. To handle this, we make use of the shift operator $\mathscr{S}_{n+1}: V_{h, n} \to V_{h, n+1}$ defined by \eqref{eqdef:shiftop}:
\begin{equation}
\begin{split}
& \; t_{n+1}(u_{h,n}^-, -\lap_{n+1}u_{h,n}^+)_{\Om{0}} \\
= & \; t_{n+1}((\mathds{1} - \mathscr{S}_{n+1})u_{h,n}^-, -\lap_{n+1}u_{h,n}^+)_{\Om{0}} + t_{n+1}(\mathscr{S}_{n+1}u_{h,n}^-, -\lap_{n+1}u_{h,n}^+)_{\Om{0}}
\end{split}   
\label{stablem2dlapmid_rhsn2N}
\end{equation}
Using standard estimates, Lemma~\ref{lem:shiftop_error}, and \eqref{quasiuniformity_st}, the first term is
\begin{equation}
t_{n+1}((\mathds{1} - \mathscr{S}_{n+1})u_{h,n}^-, -\lap_{n+1}u_{h,n}^+)_{\Om{0}} \lesssim \frac{1}{\varepsilon}\int_{I_{n}} \norman{u_{h}}^2 \ud t + \varepsilon t_{n+1}\int_{I_{n+1}} \| \lap_{n+1}u_{h}\|_{\Om{0}}^2 \ud t
\label{stablem2dlapmid_rhsn2N_1}
\end{equation}
Using \eqref{dlapdef} on the second term in (\ref{stablem2dlapmid_rhsn2N}) and combining it with the other two terms in the last row of \eqref{stablem2dlapmid} we have
\begin{equation}
\begin{split}
& \; t_{n+1}A_{n+1}(\mathscr{S}_{n+1}u_{h,n}^-, u_{h,n}^+) - \frac{t_{n+1}}{2}A_{n+1}(u_{h,n}^+, u_{h,n}^+) - \frac{t_{n}}{2}A_{n}(u_{h,n}^-, u_{h,n}^-) \\
\leq & \; \frac{t_{n+1}}{2}A_{n+1}(\mathscr{S}_{n+1}u_{h,n}^-, \mathscr{S}_{n+1}u_{h,n}^-) - \frac{t_{n}}{2}A_{n}(u_{h,n}^-, u_{h,n}^-) \\
\lesssim & \; \bigg(1 + \frac{1}{\varepsilon}\bigg) \int_{I_{n}} \norman{u_{h}}^2 \ud t + \varepsilon t_n \int_{I_{n}} \| \lap_{n}u_{h}\|_{\Om{0}}^2 \ud t   
\end{split}
\label{stablem2dlapmid_rhsn2N2_21}
\end{equation}
where we have first used the algebraic identity $(a - b)^2 = a^2 - 2ab + b^2$ and Lemma~\ref{Ahtcoerlem} to estimate the difference term from below by $0$, then Lemma~\ref{lem:shiftenergy}, \eqref{quasiuniformity_st}, that $t_{n+1} \lesssim t_{n}$ for $n = 1, \dots, N-1$, which follows from the temporal quasi-uniformity, and standard estimates. Collecting the estimates, we have
\begin{equation}
\begin{split}
& \sum_{n=1}^N t_n \int_{I_n} \| \lap_{n}u_h \|_\Om{0}^2 \ud t \\
\lesssim & \; \frac{1}{\varepsilon} \| u_{0} \|_{\Om{0}}^2 + \frac{1}{\varepsilon} \sum_{n=1}^{N-1} \int_{I_{n}} \norman{u_{h}}^2 \ud t + \varepsilon \sum_{n=1}^{N} t_n \int_{I_{n}} \| \lap_{n}u_{h}\|_{\Om{0}}^2 \ud t
\end{split}
\label{stablem2dlapfin}
\end{equation}
Kicking back the last term on the right-hand side, taking $\varepsilon$ sufficiently small, and using Lemma~\ref{stablem1}, we may obtain
\begin{equation}
\sum_{n=1}^N t_n \int_{I_n} \| \lap_{n}u_h \|_\Om{0}^2 \ud t \lesssim \| u_{0} \|_{\Om{0}}^2
\label{stablem2_dlapest}
\end{equation}
It is thus sufficient to estimate the time-derivative terms and the time jump terms on the left-hand side of (\ref{stablem2res}) by the left-hand side of (\ref{stablem2_dlapest}) to obtain (\ref{stablem2res}).
We proceed by taking $v = (t - t_{n-1})\dot{u}_h$ in \eqref{feformf0}. The subsequent treatment is exactly the same as in the standard case, and yields
%
%
\begin{equation}
\int_{I_n} \| \dot{u}_h \|_{\Om{0}}^2 \ud t \lesssim \int_{I_n} \| \lap_n u_h\|_{\Om{0}}^2 \ud t
\label{stablem2dtu_fin}
\end{equation}
We proceed by showing an estimate for the time jump terms for $n = 2, \dots, N$. We would like to take $v = [u_h]_{n-1} = u_{h, n-1}^+ - u_{h, n-1}^-$ in (\ref{feformf0}), but due to the simple discontinuous evolution of $\mathcal{T}_G$, $u_{h,n-1}^- \notin V_{h,n}$ as already pointed out, so we cannot make this choice. To handle this, we use the $L^2(\Om{0})$-projection $P_n$. By taking $v = P_n[u_h]_{n-1}$ in \eqref{feformf0} and using \eqref{Ptdef} and \eqref{dlapdef}, we may obtain
%
%
%
\begin{equation}
 \| [u_h]_{n-1} \|_{\Om{0}}^2 = ([u_h]_{n-1}, (\mathds{1} - P_n)[u_h]_{n-1})_{\Om{0}} + \int_{I_n} (\lap_n u_h - \dot{u}_h, [u_h]_{n-1})_{\Om{0}} \ud t
\label{stablem2Ptju_mid}
\end{equation}
We treat the terms separately, starting with the first. For $n = 2, \dots, N$, we use standard estimates, Lemma~\ref{lem:shiftop_error}, and the spatiotemporal quasi-uniformity \eqref{quasiuniformity_st} to get
\begin{equation}
([u_h]_{n-1}, (\mathds{1} - P_n)[u_h]_{n-1})_{\Om{0}} \leq \frac{1}{4}\| [u_h]_{n-1}\|_{\Om{0}}^2 + C k_n \int_{I_{n-1}} \normanm{u_h}^2 \ud t
\label{stablem2Ptju_mid_rhs1}
\end{equation}
Using standard estimates, the second term in (\ref{stablem2Ptju_mid}) is 
\begin{equation}
\int_{I_n} (\lap_n u_h - \dot{u}_h, [u_h]_{n-1})_{\Om{0}} \ud t \leq \frac{1}{4}\|[u_h]_{n-1}\|_{\Om{0}}^2 + 2 k_n \int_{I_n} \| \lap_n u_h \|_{\Om{0}}^2 + \| \dot{u}_h\|_{\Om{0}}^2 \ud t
\label{stablem2Ptju_mid_rhs2}
\end{equation}
Using the estimates (\ref{stablem2Ptju_mid_rhs1}) and (\ref{stablem2Ptju_mid_rhs2}) in (\ref{stablem2Ptju_mid}), we may obtain, for $n = 2, \dots, N$, that
\begin{equation}
\frac{1}{k_n} \| [u_h]_{n-1} \|_{\Om{0}}^2 \lesssim \int_{I_n} \| \dot{u}_h\|_{\Om{0}}^2 + \| \lap_n u_h \|_{\Om{0}}^2 \ud t + \int_{I_{n-1}} \normanm{u_h}^2 \ud t
\label{stablem2Ptju_fin}
\end{equation}
Finally we have all the partial results that are needed to show the desired stability estimate \eqref{stablem2res}. Starting with the left-hand side of \eqref{stablem2res}, first using \eqref{stablem2Ptju_fin}, then applying Lemma~\ref{stablem1} followed by \eqref{stablem2dtu_fin}, and finally using \eqref{stablem2_dlapest} concludes the proof of Lemma~\ref{stablem2}.

\end{proof}

\end{lemma}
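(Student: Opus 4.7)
The plan follows the Eriksson-Johnson strategy: test \eqref{feformf0} with three suitably weighted trial functions, one to extract each term on the left-hand side of \eqref{stablem2res}, and close using the basic stability estimate (Lemma~\ref{stablem1}). The overarching obstacle is that the simple discontinuous mesh evolution leaves $u_{h,n}^- \in V_{h,n}$ but generally not in $V_{h,n+1}$, so whenever $u_{h,n}^-$ must meet an object defined on the $(n+1)$-partition (such as $A_{n+1}$, $\lap_{n+1}$, or $P_{n+1}$), I would insert the shift operator $\mathscr{S}_{n+1}$ from Definition~\ref{def:shiftop} and pay the approximation price via Lemma~\ref{lem:shiftop_error} or Lemma~\ref{lem:shiftenergy}.

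For the Laplacian bound, I would test with $v = -t_n \lap_n u_h \in V_h$. The stiffness contribution yields the desired $\sum_n t_n \int_{I_n} \| \lap_n u_h \|_{\Om{0}}^2 \ud t$ via \eqref{dlapdef}; the time-derivative contribution telescopes slabwise by symmetry of $A_n$, producing endpoint terms $\pm \frac{t_n}{2} A_n(u_{h,\bullet}^{\pm}, u_{h,\bullet}^{\pm})$; the jump terms couple $u_{h,n}^-$ with $\lap_{n+1} u_{h,n}^+$. This pairing is the crux: since $A_{n+1}(u_{h,n}^-, \cdot)$ is not meaningful, I would split $u_{h,n}^- = \mathscr{S}_{n+1} u_{h,n}^- + (\mathds{1} - \mathscr{S}_{n+1}) u_{h,n}^-$. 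The shift-error part absorbs into a kicked-back $\varepsilon t_{n+1} \int_{I_{n+1}} \| \lap_{n+1} u_h \|_{\Om{0}}^2 \ud t$ after Cauchy-Schwarz, Lemma~\ref{lem:shiftop_error}, and $h \lesssim k_{\min}$ from \eqref{quasiuniformity_st}. For the shifted part, combining with the telescoped endpoint terms and completing the square reduces everything to $\frac{t_{n+1}}{2} A_{n+1}(\mathscr{S}_{n+1} u_{h,n}^-, \mathscr{S}_{n+1} u_{h,n}^-) - \frac{t_n}{2} A_n(u_{h,n}^-, u_{h,n}^-)$, which Lemma~\ref{lem:shiftenergy} controls by $h \norman{u_h} \| \lap_n u_h \|_{\Om{0}}$ and hence, after another absorption step, by energy terms handled by Lemma~\ref{stablem1}. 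I expect this to be the main obstacle, since pushing the shift perturbations through a kickback argument is where the stricter space-time quasi-uniformity \eqref{quasiuniformity_st} is needed and where the bookkeeping is most delicate.

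For the time-derivative contribution, I would test with $v = (t - t_{n-1}) \dot u_h$. Since this vanishes at $t_{n-1}^+$, all inter-slab jumps drop out and a standard slab-by-slab integration-by-parts against $\tfrac{d}{dt} A_n(u_h, u_h)$ yields $\int_{I_n} \| \dot u_h \|_{\Om{0}}^2 \ud t \lesssim \int_{I_n} \| \lap_n u_h \|_{\Om{0}}^2 \ud t$, after which weighting by $t_n$ and summing reduces this term to the Laplacian bound just obtained.

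For the jump term, since $u_{h,n-1}^- \notin V_{h,n}$ forbids $v = [u_h]_{n-1}$, I would test with $v = P_n [u_h]_{n-1}$, extended constantly in $t$ across $I_n$. Combined with \eqref{Ptdef} and \eqref{dlapdef}, this yields the identity $\| [u_h]_{n-1} \|_{\Om{0}}^2 = ([u_h]_{n-1}, (\mathds{1} - P_n) [u_h]_{n-1})_{\Om{0}} + \int_{I_n} (\lap_n u_h - \dot u_h, [u_h]_{n-1})_{\Om{0}} \ud t$. The second term is immediate Cauchy-Schwarz. For the first, since $P_n$ fixes $V_{h,n}$ and $u_{h,n}^+ \in V_{h,n}$ we have $(\mathds{1} - P_n) [u_h]_{n-1} = -(\mathds{1} - P_n) u_{h,n-1}^-$; the best-approximation property of $P_n$ against $\mathscr{S}_n u_{h,n-1}^- \in V_{h,n}$, combined with Lemma~\ref{lem:shiftop_error} and \eqref{quasiuniformity_st}, then yields $\| (\mathds{1} - P_n) u_{h,n-1}^- \|_{\Om{0}} \lesssim h \normanm{u_h} \lesssim k^{1/2} \normanm{u_h}$. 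Dividing by $k_n$, weighting by $t_n$, and summing closes the jump estimate against the previous two steps and Lemma~\ref{stablem1}.
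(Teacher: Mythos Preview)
Your proposal is correct and follows essentially the same approach as the paper: the same three test functions $v=-t_n\lap_n u_h$, $v=(t-t_{n-1})\dot u_h$, and $v=P_n[u_h]_{n-1}$, the same shift-operator splitting with Lemmas~\ref{lem:shiftop_error} and~\ref{lem:shiftenergy} at precisely the points where $u_{h,n}^-\notin V_{h,n+1}$ obstructs the standard argument, and the same closure via Lemma~\ref{stablem1} and the space-time quasi-uniformity \eqref{quasiuniformity_st}. The only cosmetic difference is that you make the best-approximation step $\|(\mathds{1}-P_n)u_{h,n-1}^-\|_{\Om{0}}\le\|u_{h,n-1}^--\mathscr{S}_n u_{h,n-1}^-\|_{\Om{0}}$ explicit, whereas the paper folds it into ``standard estimates, Lemma~\ref{lem:shiftop_error}''.
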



\noindent Theorem~\ref{stabestmainthm} may now be proven by deriving lower bounds for the left-hand sides of the auxiliary stability estimates. To do this, we use the Cauchy--Schwarz inequality to obtain
%
\begin{equation}
\bigg( \sum_{n = 1}^N \| w \| \bigg)^2 \bigg( \sum_{n = 1}^N \frac{k_n}{t_n} \bigg)^{-1} \leq \sum_{n = 1}^N \frac{t_n}{k_n} \| w \|^2 \label{anbnineq}
\end{equation}
where $\| w\|$ denotes a generic norm term. We also use that
%
\begin{equation}
\sum_{n = 1}^N \frac{k_n}{t_n} \leq 1 + \log(t_N/k_1) \label{lnineq}
\end{equation}

\section{Error analysis} \label{secerran}

To prove an a priori error estimate, we follow the methodology presented by Eriksson and Johnson in~\cite{Eriksson1991, Eriksson1995} and make only minor modifications to account for the CutFEM setting.

\begin{theorem}[An optimal order a priori error estimate in $\| \cdot \|_{\Om{0}}$ at the final time] \label{aprithm}
Let $u$ be the solution of \eqref{heateq} and let $u_h$ be the finite element solution defined by \eqref{feform}. Then, for $q = 0, 1$, we have that
\begin{equation}
\| u(t_N) - u_{h,N}^- \|_{\Om{0}} \leq C_N \max_{1 \le n \le N} \bigg\{k_n^{2q+1}\| \dot{u}^{(2q+1)} \|_{\Om{0}, I_n} + h^{p+1} \| D_x^{p+1} u \|_{\Om{0}, I_n} \bigg\}
\label{apri}
\end{equation}  
where $\| \cdot \|_{\Om{0}} = \| \cdot \|_{L^2(\Om{0})}$, $C_N = C(\log(t_N/k_N) + 1)^{1/2}$, where $C > 0$ is a constant, $k_n = t_n - t_{n-1}$, $\| w \|_{\Om{0}, I_n} = \max_{t \in I_n} \|w \|_{\Om{0}}$, $\dot{u}^{(2q+1)} = \partial^{2q+1} u / \partial t^{2q+1}$, $h$ is the largest diameter of a simplex in $\mathcal{T}_0 \cup \mathcal{T}_G$, and $D_x$ denotes the derivative with respect to space.
\end{theorem}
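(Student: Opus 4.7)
I will follow the Eriksson--Johnson duality methodology of~\cite{Eriksson1991, Eriksson1995}, adapted to the slab-dependent spaces via the shift operator of Section~4.3. The argument has three ingredients: (i) a slabwise space-time projection $\tilde u \in V_h$ of $u$, chosen so that $\tilde u(\cdot, t_n^-) = R_n u(\cdot, t_n)$ (where $R_n$ is the Ritz projection of~\eqref{Rtdef}) and so that standard $L^2$-in-time moment conditions hold slabwise on each $I_n$; (ii) Galerkin orthogonality (Corollary~\ref{cor:galort}) combined with the discrete dual problem~\eqref{discdp}; and (iii) the strong stability estimate for $z_h$ in Corollary~\ref{stabestmainzhcor}. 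Setting $\rho = u - \tilde u$ and $\theta = \tilde u - u_h$, the triangle inequality gives $\|u(t_N) - u_{h,N}^-\|_{\Om{0}} \le \|\rho(t_N^-)\|_{\Om{0}} + \|\theta_N^-\|_{\Om{0}}$. The first summand equals $\|u(t_N) - R_N u(t_N)\|_{\Om{0}}$ and is bounded directly by Lemma~\ref{lem:ritzop_error}, which delivers the $h^{p+1}$ contribution in~\eqref{apri}.

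\textbf{Duality for $\|\theta_N^-\|_{\Om{0}}$.} Take $z_h \in V_h$ solving~\eqref{discdp} with $z_{h,N}^+ := \theta_N^-$, so that $\|\theta_N^-\|_{\Om{0}}^2 = B_h(\theta, z_h) = -B_h(\rho, z_h)$ by Galerkin orthogonality. Rewriting via the alternative form of $B_h$ in Lemma~\ref{lem:Bhpit} and invoking the spatial Ritz and temporal moment orthogonalities built into $\tilde u$ kills the bulk $\int_{I_n}(\rho, \dot z_h)_{\Om{0}}\,\ud t$ and $\int_{I_n} A_n(\rho, z_h)\,\ud t$ terms slabwise, leaving $-B_h(\rho, z_h) = \sum_{n=1}^{N-1}(\rho_n^-, [z_h]_n)_{\Om{0}} - (\rho_N^-, z_{h,N}^-)_{\Om{0}}$. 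Cauchy--Schwarz bounds each of these by $\max_n \|\rho_n^-\|_{\Om{0}}$ times quantities appearing on the left-hand side of Corollary~\ref{stabestmainzhcor}; that corollary supplies the $C_N\|\theta_N^-\|_{\Om{0}}$ factor. Dividing through, and combining with the spatial Ritz estimate from Lemma~\ref{lem:ritzop_error} and a standard slabwise temporal-interpolation estimate of order $k_n^{2q+1}$ for the projection $\tilde u$, yields~\eqref{apri}.

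\textbf{Main obstacle.} The principal complication, and the place where this proof departs from the classical Eriksson--Johnson argument, is that $R_n$ and $R_{n-1}$ map onto different discrete spaces, so $\tilde u$ necessarily jumps at each $t_n$ and any identity relating $\tilde u(\cdot, t_n^+) \in V_{h,n+1}$ with $\tilde u(\cdot, t_n^-) \in V_{h,n}$ — which inevitably appears whenever the slabwise orthogonalities are verified across slab boundaries or whenever $\rho_n^-$ is further decomposed — cannot be treated by a single projection. I expect to resolve this exactly as in the proof of the strong stability Lemma~\ref{stablem2}, using the shift operator $\mathscr{S}_{n-1,n}$ of Definition~\ref{def:shiftop} together with the approximation bound of Lemma~\ref{lem:shiftop_error} (and, if needed, the energy bound of Lemma~\ref{lem:shiftenergy}) to translate between the two spaces. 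Verifying that the shift-based transition preserves the superconvergent $k_n^{2q+1}$ order without losing powers of $h$ will be the most delicate technical point.
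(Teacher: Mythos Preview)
Your proposed reduction of $-B_h(\rho,z_h)$ to the time-jump terms alone is incorrect, and this is where the argument breaks. The Ritz orthogonality only eliminates $A_n(u-R_nu,z_h)$, leaving $A_n(R_nu-\tilde u,z_h)$; and the temporal moment condition in $\tilde I^n$ is orthogonality against $\mathcal P^{q-1}(I_n)$, whereas $z_h$ and $\dot z_h$ have temporal degree $q$ and $q-1$ respectively, so for $q=1$ the term $\int_{I_n}(\rho,\dot z_h)\,\ud t$ reduces only to $\int_{I_n}(u-R_nu,\dot z_h)\,\ud t$, not to zero. Thus neither bulk term vanishes. Worse, if they did vanish as you claim, the remaining quantities $\rho_n^- = u(t_n)-R_nu(t_n)$ carry no time-step dependence at all, so your appeal to ``a standard slabwise temporal-interpolation estimate of order $k_n^{2q+1}$'' has nothing to act on: the $k$-terms in \eqref{apri} would disappear from the estimate, which signals that a genuine contribution has been dropped. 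In the paper the surviving bulk terms are exactly where the temporal error enters: for $q=0$ one writes $A_n(R_nu-\tilde u,z_h)=(\,R_nu-\tilde u,\,-\lap_n z_h)_{\Om{0}}$ and estimates by $\|R_nu-\tilde u\|_{\Om{0},I_n}\int_{I_n}\|\lap_n z_h\|_{\Om{0}}\,\ud t$, yielding the $k_n$ factor via Lemma~\ref{lemInv}; for $q=1$ a further manipulation produces the factor $k_n\|\lap_n(R_nu-\tilde u)\|_{\Om{0},I_n}$ paired with $\int_{I_n}\|\dot z_h\|_{\Om{0}}\,\ud t$, and the identity $\|\lap_nR_n\psi\|_{\Om{0}}\le\|\lap\psi\|_{\Om{0}}$ together with Lemma~\ref{lemInv} then delivers the superconvergent $k_n^3$.

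Your ``main obstacle'' is a phantom. The shift operator is not used anywhere in the proof of Theorem~\ref{aprithm}; it appears only in the proof of the strong stability Lemma~\ref{stablem2}, and that result is already packaged into Corollary~\ref{stabestmainzhcor}, which you correctly invoke. The error representation \eqref{thetaetaZdp} is handled slab by slab and never requires mapping a discrete function from $V_{h,n-1}$ into $V_{h,n}$: the only quantities that cross slab boundaries are $\rho_n^-$, and these equal $u(t_n)-R_nu(t_n)$ by \eqref{ipdefnm}, which lives in no discrete space and needs no translation. So drop the shift-operator paragraph entirely and instead work out the bulk-term estimates carefully for each of $q=0$ and $q=1$, following \eqref{errrep_q0}--\eqref{FqinitIII}.
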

\begin{proof}
We use the interpolant $\tilde{u} = \tilde{I}^n R_{n} u \in V_h$, where $\tilde{I}^n$ is the temporal interpolation operator defined by (\ref{ipdef}) and $R_n$ is the Ritz projection operator defined by (\ref{Rtdef}), to split the error $e = u - u_h$ into $\rho = u - \tilde{u}$ and $\theta = \tilde{u} - u_h$. Thus
\begin{equation}
\| u(t_N) - u_{h,N}^- \|_{\Om{0}} = \|e_N^-\|_{\Om{0}} \le \|\rho_N^-\|_{\Om{0}} + \|\theta_N^-\|_{\Om{0}}
\label{errorsplit_norm} 
\end{equation}
For $\rho$, we have from \eqref{ipdefnm} that for $n = 1, \dots, N$,
\begin{equation}
\begin{split}
\| \rho_n^- \|_{\Om{0}} & = \|(u - \tilde{I}^nR_{n}u)_n^- \|_{\Om{0}} = \| u_n^- - R_{n}u_n^- \|_{\Om{0}} \leq \| u - R_{n}u \|_{\Om{0}, I_n}
\end{split}
\label{rhonm}
\end{equation} 
For $\theta$, we note that since $z_h \in V_h$, the Galerkin orthogonality (\ref{galort}) gives
\begin{equation}
B_h(\theta, z_h) = - B_h(\rho, z_h)
\label{thetagalort}
\end{equation} 
Since $\theta \in V_h$, the discrete dual problem \eqref{discdp} with $z_{h,N}^+ = \theta_N^-$ gives
\begin{equation}
B_h(\theta, z_h) = \|\theta_N^-\|_{\Om{0}}^2
\label{thetazdef}
\end{equation} 
Combining (\ref{thetagalort}) with (\ref{thetazdef}), and using Lemma~\ref{lem:Bhpit}, we obtain the error representation
\begin{equation}
\begin{split}
\|\theta_N^-\|_{\Om{0}}^2 = & \sum_{n=1}^N \int_{I_n} (\rho, \dot z_h)_{\Om{0}} - A_{h,t}(\rho, z_h) \ud t + \sum_{n=1}^{N-1} (\rho_{n}^-, [z_h]_{n})_{\Om{0}} - (\rho_{N}^-, z_{h,N}^-)_{\Om{0}}
\end{split}
\label{thetaetaZdp}
\end{equation} 
Treating the terms on the right-hand side just as in~\cite{Eriksson1991, Eriksson1995}, we obtain for $q = 0$
\begin{equation}
\begin{split}
\|\theta_N^-\|_{\Om{0}}^2 \le & \; \max_{1 \le n \le N} \bigg\{\|u - R_{n}u \|_{\Om{0}, I_n} + \| R_{n}u - \tilde{u} \|_{\Om{0}, I_n} \bigg\} \\
& \times \bigg(\sum_{n=1}^N \int_{I_n} \| \lap_{n} z_h \|_{\Om{0}} \ud t + \sum_{n=1}^{N-1} \| [z_h]_n \|_{\Om{0}} + \| z_{h,N}^- \|_{\Om{0}} \bigg) \\
\le & \; C_N F_0(u) \|\theta_N^-\|_{\Om{0}}
\end{split}
\label{errrep_q0}
\end{equation} 
and for $q=1$
\begin{equation}
\begin{split}
\|\theta_N^-\|_{\Om{0}}^2 \le & \; \max_{1 \le n \le N} \bigg\{\|u - R_{n}u \|_{\Om{0}, I_n} + k_n \|\lap_{n}\{R_{n}u - \tilde{u}\}\|_{\Om{0}, I_n} \bigg\} \\
& \times \bigg( 2 \sum_{n=1}^N \int_{I_n}  \| \dot z_h \|_{\Om{0}} \ud t + \sum_{n=1}^{N-1} \| [z_h]_n \|_{\Om{0}} + \| z_{h,N}^- \|_{\Om{0}} \bigg) \\
\le & \; C_N F_1(u) \|\theta_N^-\|_{\Om{0}}
\end{split}
\label{errrep_q1}
\end{equation} 
where $F_q(u)$ is the factor with the $\max$-function. To obtain the last inequalities, we have used the stability estimate  (\ref{stabestmainzh}) with $z_{h,N}^+ = \theta_N^-$. Thus, for $q = 0, 1$
\begin{equation}
\|\theta_N^-\|_{\Om{0}} \le C_N F_q(u)
\label{theta}
\end{equation}
We note that we may write the argument to the $\max$-function in $F_q(u)$ as
%
\begin{equation}
\|u - R_{n}u \|_{\Om{0}, I_n} + (1 - q)\| R_{n}u - \tilde{u} \|_{\Om{0}, I_n} + q k_n\|\lap_{n}\{R_{n}u - \tilde{u}\}\|_{\Om{0}, I_n}
\label{Fqinit}
\end{equation}
We treat the terms separately, starting with the first for which we use Lemma~\ref{lem:ritzop_error}:
\begin{equation}
\|u - R_{n}u \|_{\Om{0}, I_n} \lesssim h^{p+1} \| D_x^{p+1} u \|_{\Om{0}, I_n}
\label{FqinitI}
\end{equation}
The second term in (\ref{Fqinit}) is
\begin{equation}
\begin{split}
\|R_{n}u - \tilde{u} \|_{\Om{0}, I_n} & \le \|u - R_{n}u \|_{\Om{0}, I_n} + \|\tilde{I}^n(u - R_{n}u)\|_{\Om{0}, I_n} + \|u - \tilde{I}^nu\|_{\Om{0}, I_n} \\
& \lesssim h^{p+1} \| D_x^{p+1} u \|_{\Om{0}, I_n} + k_n^{q+1}\| \dot{u}^{(q+1)} \|_{\Om{0}, I_n}
\end{split}
\label{FqinitII}
\end{equation}
For the three terms in the first row of \eqref{FqinitII}: we have used (\ref{FqinitI}) on the first term; on the second term, we have first used the boundedness of $\tilde{I}^n$ from Lemma~\ref{lemInv}, and then applied (\ref{FqinitI}); on the last term, we have used (\ref{estvInv}) from Lemma~\ref{lemInv}. We move on to the third term in (\ref{Fqinit}). Note that this term is only present for $q = 1$. To treat it we use that: For $\psi \in H^2(\Om{0})$ and $v \in V_{h, n}$, we have from \eqref{dlapdef}, \eqref{Rtdef}, and Corollary~\ref{cor:partintbroksob_A} that
\begin{equation}
(- \lap_{n}R_{n}\psi, v)_{\Om{0}} = A_n(R_{n}\psi, v) = A_n(\psi, v) = (-\lap \psi, v)_{\Om{0}}
\label{lapInRtwest_aux0}
\end{equation}
Taking $v = -\lap_{n}R_{n}\psi$ in \eqref{lapInRtwest_aux0} gives $\| \lap_{n}R_{n}\psi \|_{\Om{0}} \leq \| \lap \psi \|_{\Om{0}}$ which we use after applying Lemma~\ref{lemInv} to the third term in (\ref{Fqinit}). Thus
\begin{equation}
\|\lap_{n}\{R_{n}u - \tilde{u}\} \|_{\Om{0}, I_n} \lesssim k_n^2 \|\lap_{n}R_n \dot{u}^{(2)} \|_{\Om{0}, I_n} \leq k_n^2 \|\lap \dot{u}^{(2)} \|_{\Om{0}, I_n} = k_n^2 \| \dot{u}^{(3)} \|_{\Om{0}, I_n}
\label{FqinitIII}
\end{equation}
Collecting the results, we get for $q = 0, 1$
\begin{equation}
F_q(u) \lesssim \max_{1 \le n \le N} \bigg\{ h^{p+1} \| D_x^{p+1} u \|_{\Om{0}, I_n} + k_n^{2q+1}\| \dot{u}^{(2q+1)} \|_{\Om{0}, I_n} \bigg\}
\label{Fqfin}
\end{equation} 
We note from \eqref{rhonm} that $\| \rho_N^- \|_{\Om{0}} \leq F_q(u)$. Combining this with \eqref{theta} and using \eqref{Fqfin} concludes the proof of Theorem~\ref{aprithm}.
\end{proof}

\section{Numerical results} \label{secnumres}

%
%
Here we present numerical results for a problem in one spatial dimension on the unit interval with exact solution $u(x, t) = \sin^2(\pi x)\euler^{-t/2}$. We compute $u_h$ for $p=1$ and $q = 0,1$. The right-hand side integrals have been approximated locally by quadrature over the space-time prisms: first quadrature in time, then quadrature in space. In space, \emph{three}-point Gauss-Legendre quadrature has been used, thus resulting in a quadrature error $= O(h^6)$. For dG(0) in time, the midpoint rule has been used, thus resulting in a quadrature error $= O(k^2)$. For dG(1) in time, \emph{three}-point Lobatto quadrature has been used, thus resulting in a quadrature error $= O(k^4)$. The stabilization parameter $\gamma = 10$. \\

For the error convergence study, both $\mathcal{T}_0$ and $\mathcal{T}_G$ are uniform meshes, with mesh sizes $h_0$ and $h_G$, respectively. The shape of $\mathcal{T}_G$ is kept fixed to conveniently avoid introducing a change in $h_G$. The temporal discretization is also uniform with time step $k$ for each instance. The final time is set to $T = 1$, the length of $\mathcal{T}_G$ is 0.25, and the initial position of $\mathcal{T}_G$ is the spatial interval [0.125, 0.125 + 0.25]. The $error$ is $\| e(T) \|_{L^2(\Om{0})} = \| u(T) - u_{h, N}^-\|_{\Om{0}}$. The integral in the $L^2$-norm has been approximated by composite \emph{three}-point Gauss-Legendre quadrature, thus resulting in a quadrature error $= O(h^3)$. In the $k$-convergence study, the mesh sizes have been fixed at $h = 10^{-3}$ and $h = 5 \cdot 10^{-5}$ for dG(0) and dG(1), respectively. Analogously, in the $h$-convergence study, the time step has been fixed at $k = 10^{-4}$ and $k = 10^{-3}$ for dG(0) and dG(1), respectively. Figure~\ref{fig_dG0_ECC_mu0p6} and \ref{fig_dG1_ECC_mu0p6} display error convergence plots for dG(0) and dG(1) in time with $\mu=0.6$. The left plots show the $error$ versus $k$ and the right plots versus $h = h_0 \geq h_G$. Besides the computed $error$, each plot contains a line segment that has been computed with the linear least squares method to fit the error data. This line segment is referred to as the LLS of the $error$. Reference slopes are also included. In Table~\ref{tabnumord} we summarize the slope of the LLS of the $error$ for different values of $\mu$. 
%
%
\begin{figure}[h]
\centering
\includegraphics[width=0.4\textwidth]{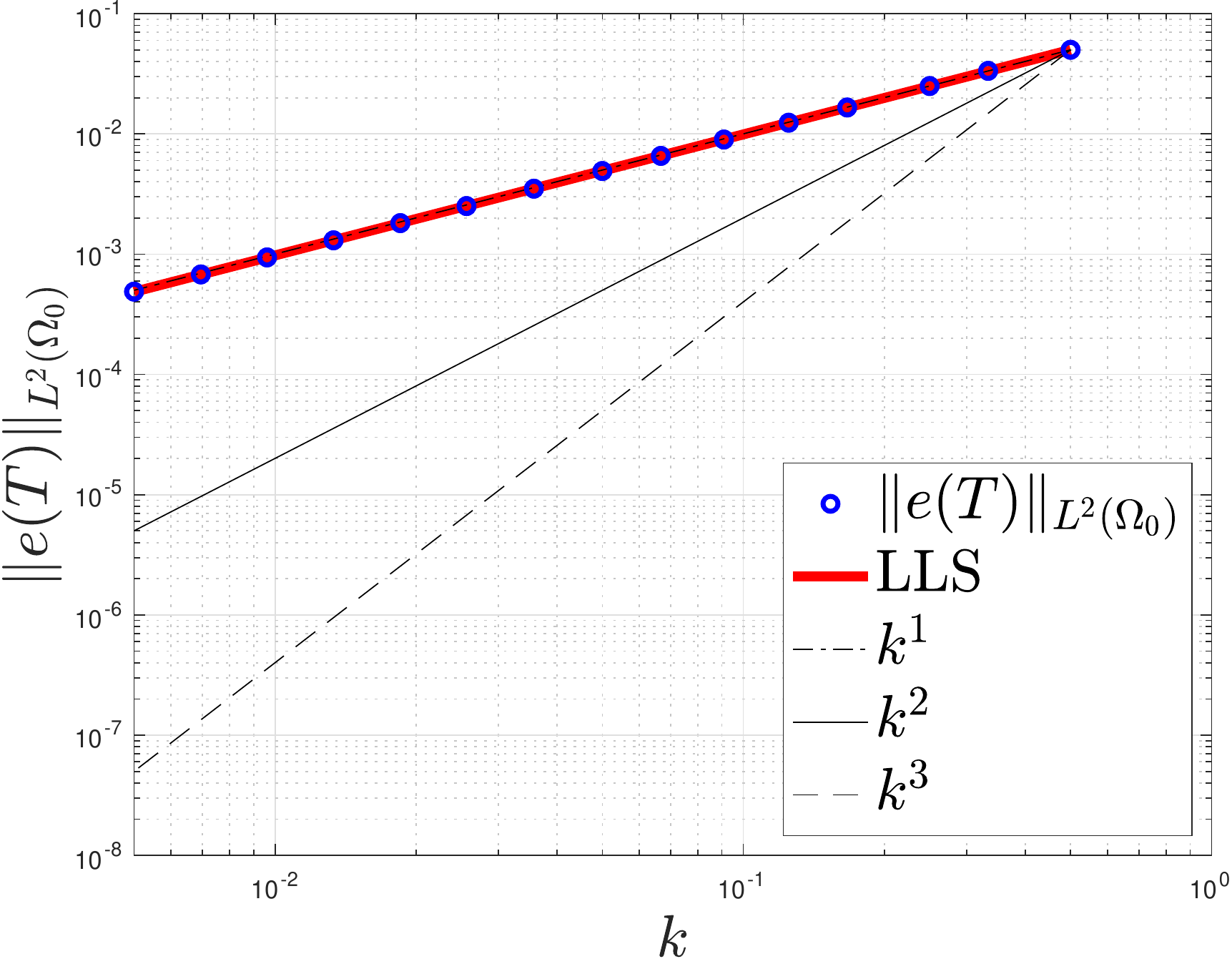}
\includegraphics[width=0.4\textwidth]{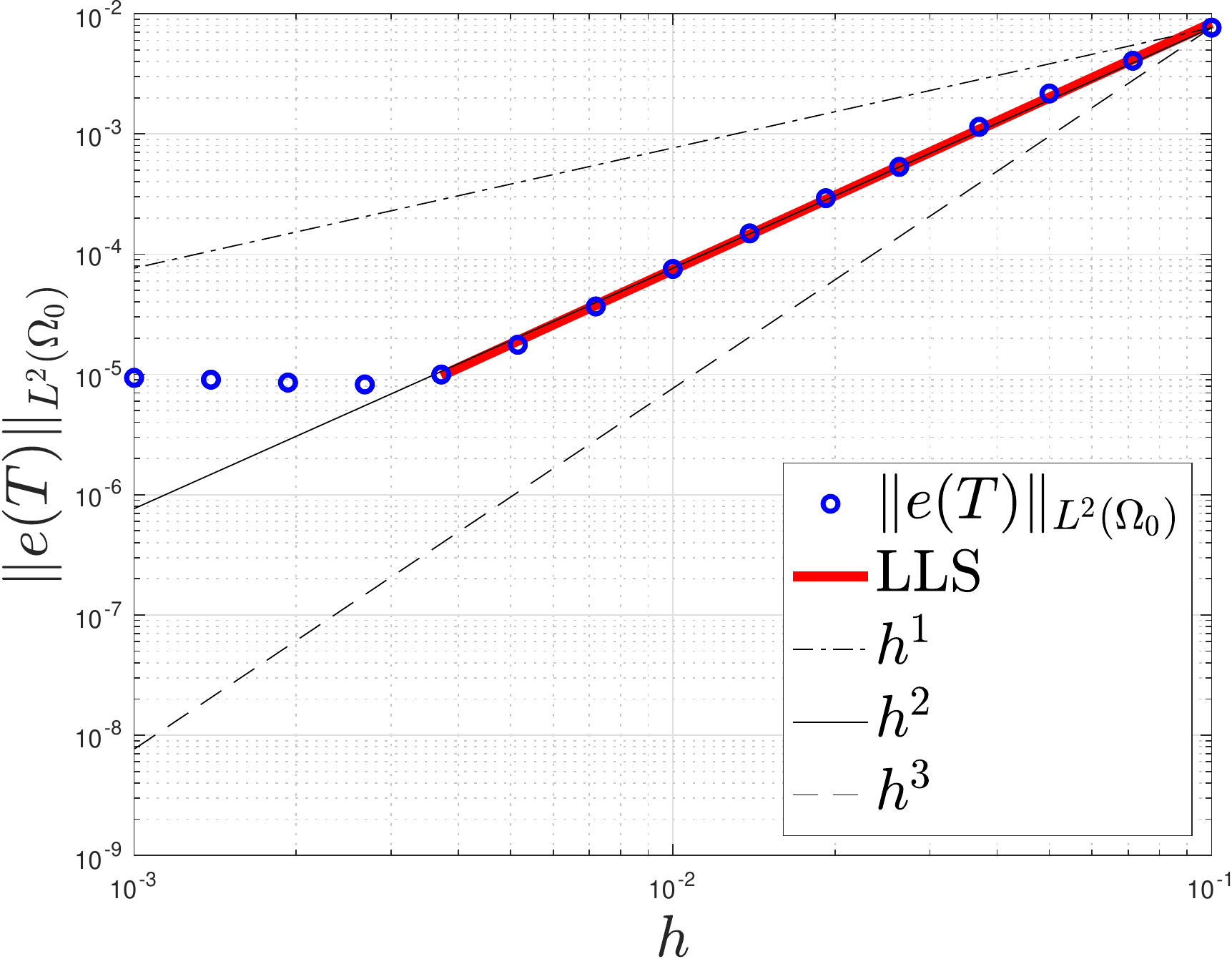}
\caption{Error convergence for dG(0) with $\mu = 0.6$. 
\label{fig_dG0_ECC_mu0p6}}
\end{figure}
%
%
\begin{figure}[h]
\centering
\includegraphics[width=0.4\textwidth]{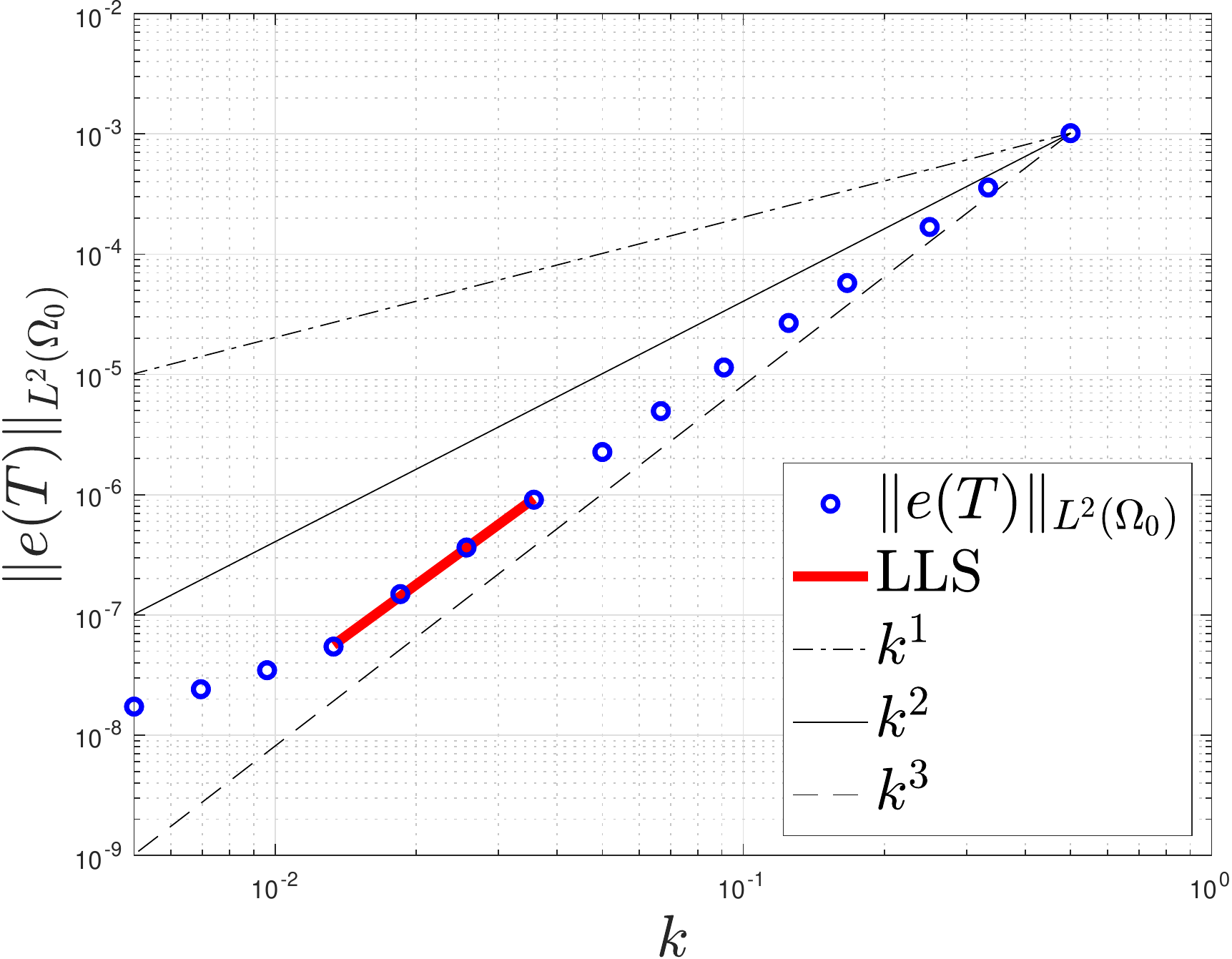}
\includegraphics[width=0.4\textwidth]{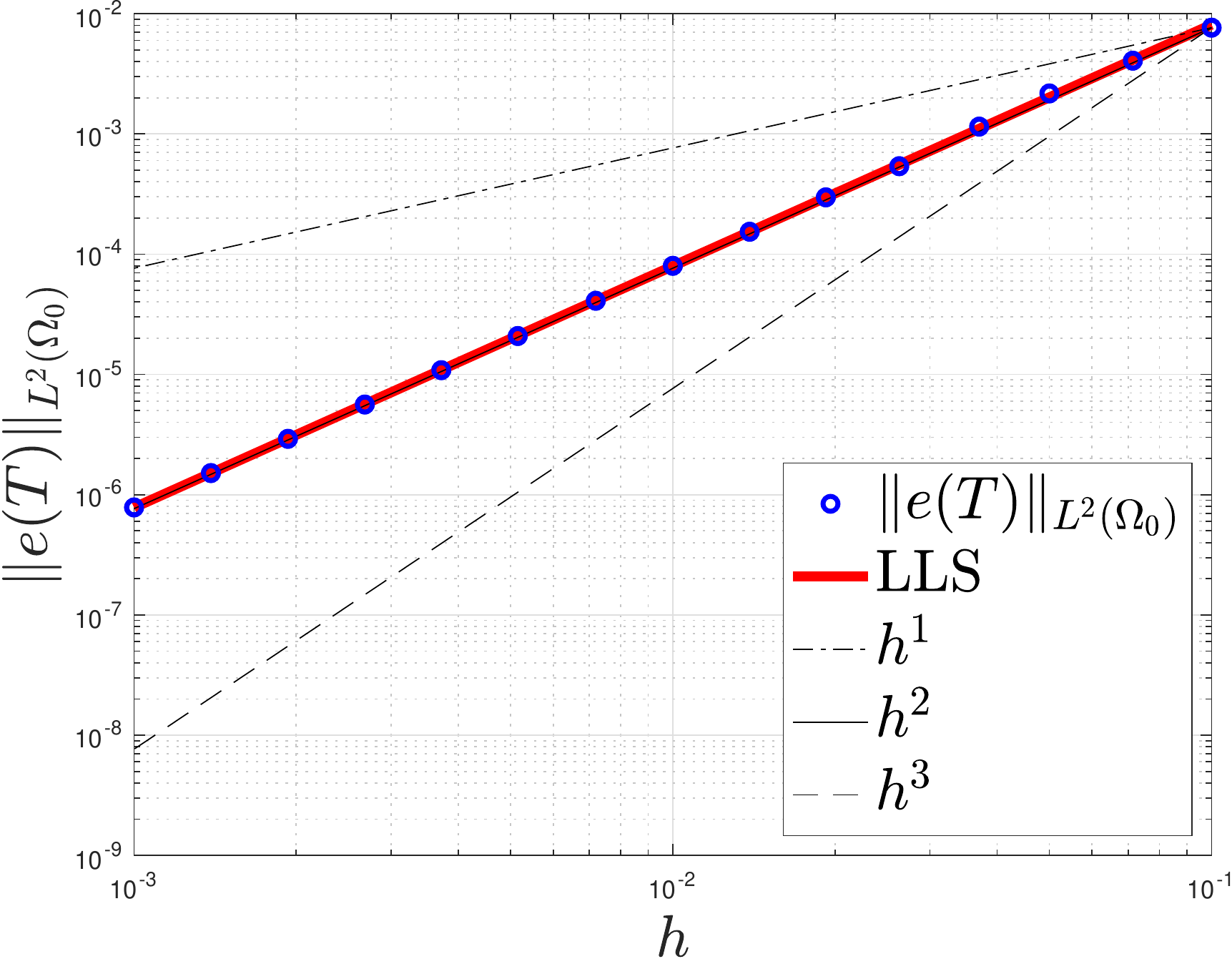}
\caption{Error convergence for dG(1) with $\mu = 0.6$. 
\label{fig_dG1_ECC_mu0p6}}
\end{figure}
\begin{table}[h]
\centering
\begin{tabular}{ l | c | c | c | c | } 
  & \multicolumn{2}{|c|}{dG(0) in time} & \multicolumn{2}{|c|}{dG(1) in time} \\  
  \hline                    
  $\mu$ & versus $k$ (points) & versus $h$ (points) & versus $k$ (points) & versus $h$ (points) \\
  \hline
  0 & 1.0064 (1--15) & 2.0559 (1--11) & 2.7890 (9--12) & 2.0122 (1--15) \\
  \hline
  0.1 & 1.0064 (1--15) & 2.0486 (1--11) & 2.9142 (9--12) & 2.0058 (1--15) \\ 
  \hline
  0.2 & 1.0064 (1--15) & 2.0421 (1--11) & 2.8493 (9--12) & 2.0024 (1--15) \\ 
  \hline 
  0.4 & 1.0064 (1--15) & 2.0422 (1--11) & 2.6994 (9--12) & 2.0024 (1--15) \\ 
  \hline 
  0.6 & 1.0064 (1--15) & 2.0501 (1--11) & 2.8437 (9--12) & 2.0082 (1--15) \\
  \hline 
\end{tabular}
\caption{The slope of the LLS of the $error$ versus $k$ and $h$ for different values of $\mu$.
\label{tabnumord}}
\end{table}


The numerical solutions presented in Figure~\ref{fig_numsol} have been computed for an equidistant space-time discretization: 22 nodes for $\mathcal{T}_0$, 7 nodes for $\mathcal{T}_G$ for all times, and 10 time steps on the interval $(0, 3]$. The length of $\mathcal{T}_G$ has again been kept fixed at 0.25 and the velocity field $\mu$ has for simplicity been slabwise constant at $\mu|_{I_n} = \frac{1}{2}\sin(\frac{2 \pi t_n}{3})$.
\begin{figure}[h]
\centering
\includegraphics[width=0.32\textwidth]{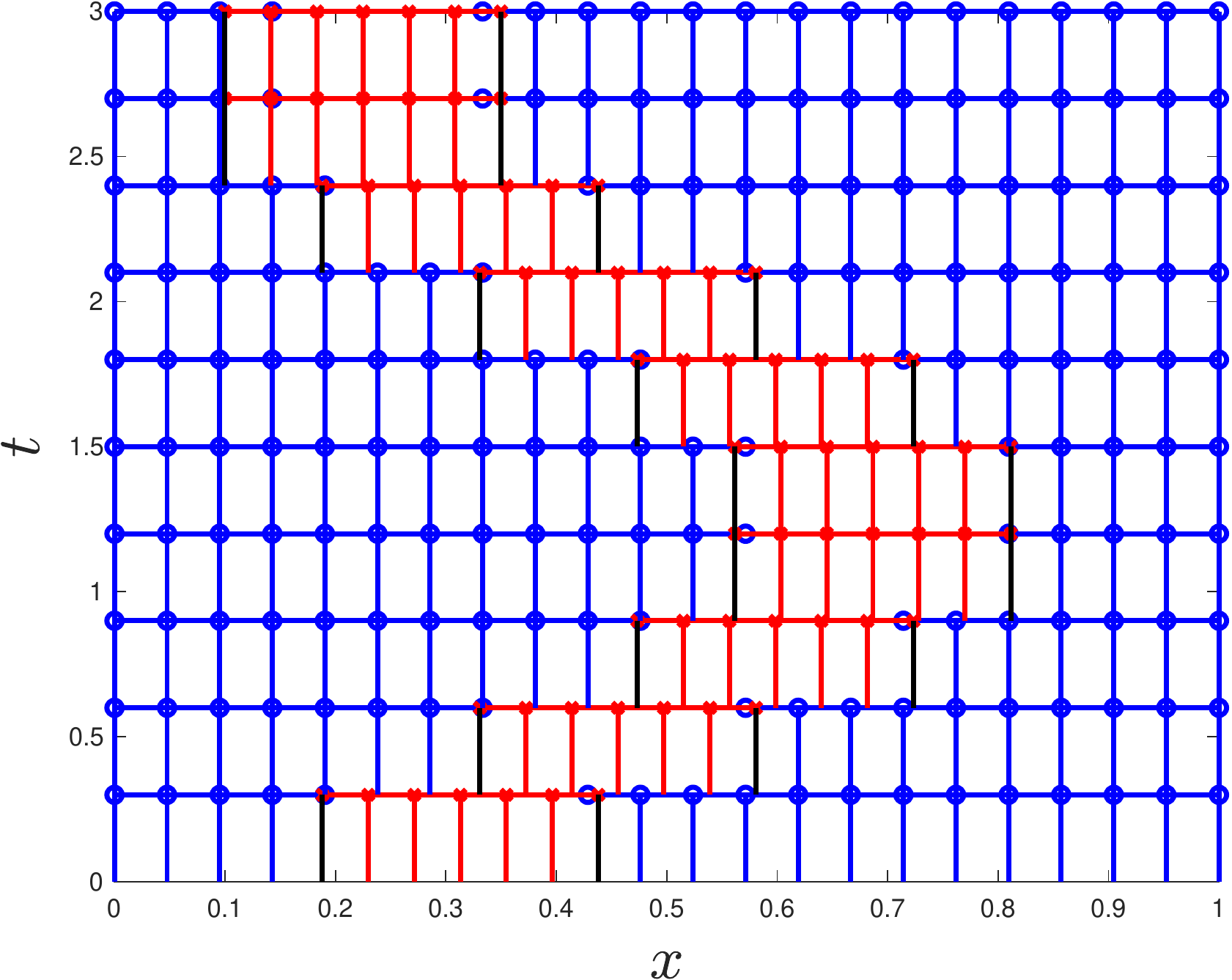}
\includegraphics[width=0.32\textwidth]{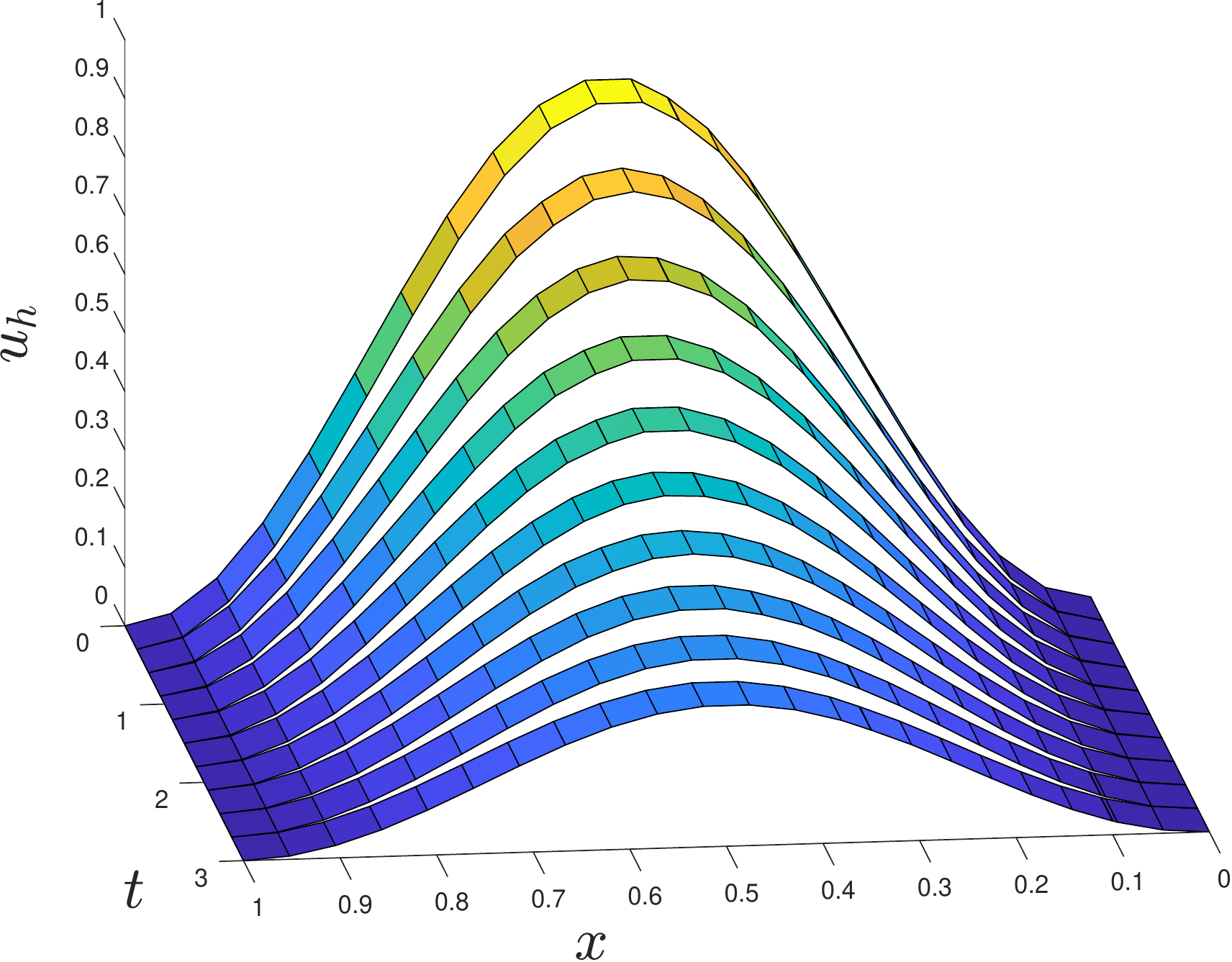}
\includegraphics[width=0.32\textwidth]{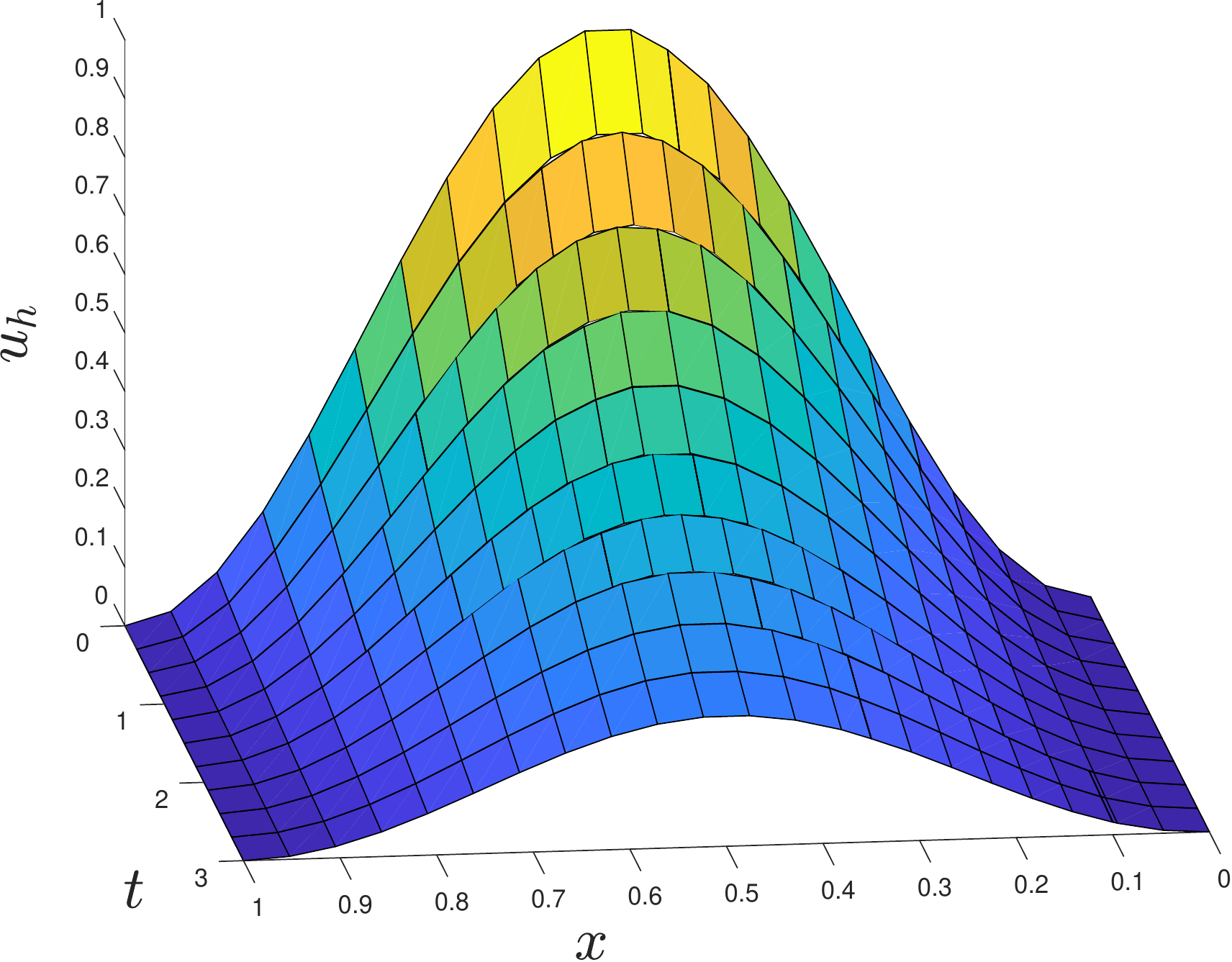}
\caption{Space-time discretization (left) with resulting dG(0)cG(1)-solution (middle) and dG(1)cG(1)-solution (right). \label{fig_numsol}}
\end{figure}

\section{Conclusions}

We have presented a cut finite element method for a parabolic model problem on an overlapping mesh situation: one stationary background mesh and one \emph{discontinuously} evolving, slabwise stationary overlapping mesh. We have applied the analysis framework presented in \cite{Eriksson1991, Eriksson1995} to the method with natural modifications to account for the CutFEM setting. The greatest difference and novelty in the presented analysis is the shift operator. The main results of the analysis are basic and strong stability estimates and an optimal order a priori error estimate. We have also presented numerical results for a parabolic problem in one spatial dimension that verify the analytic error convergence orders.

\appendix
\section{Analytic tools}

\begin{lemma}[A jump identity] \label{jmplem}
Let $\omega_+, \omega_- \in \mathbb{R}$ and $\omega_+ + \omega_- = 1$, let $[A] := A_+ - A_-$, and $\langle A \rangle := \omega_+A_+  + \omega_-A_-$. We then have
\begin{equation}
[AB] =  [A]\langle B \rangle + \langle A \rangle [B] + (\omega_- - \omega_+)[A][B]
\label{jmplemres}
\end{equation}
\begin{proof}
Using the definitions and evaluating both sides shows the identity.
\end{proof}
\end{lemma}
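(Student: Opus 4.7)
The plan is to prove the identity by direct algebraic expansion of both sides and collecting like terms, using only the definitions of $[\cdot]$ and $\langle \cdot \rangle$ together with the constraint $\omega_+ + \omega_- = 1$.

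First I would write out the left-hand side explicitly as $[AB] = A_+ B_+ - A_- B_-$. Then I would expand each of the three terms on the right-hand side from the definitions: $[A]\langle B\rangle = (A_+ - A_-)(\omega_+ B_+ + \omega_- B_-)$, $\langle A\rangle [B] = (\omega_+ A_+ + \omega_- A_-)(B_+ - B_-)$, and $(\omega_- - \omega_+)[A][B] = (\omega_- - \omega_+)(A_+ - A_-)(B_+ - B_-)$. Multiplying through yields a sum of eight (plus four) monomials in $A_\pm B_\pm$ with coefficients in $\omega_\pm$.

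Next I would collect the coefficients of the four bilinear monomials $A_+ B_+$, $A_+ B_-$, $A_- B_+$, and $A_- B_-$ on the right-hand side. The cross terms $A_+ B_-$ and $A_- B_+$ each pick up contributions of $\omega_-$ from the first expansion, $-\omega_+$ from the second, and $\mp(\omega_- - \omega_+)$ from the third, which cancel exactly. The coefficient of $A_+ B_+$ becomes $\omega_+ + \omega_+ + (\omega_- - \omega_+) = \omega_+ + \omega_- = 1$, and similarly the coefficient of $A_- B_-$ becomes $-\omega_- - \omega_- + (\omega_- - \omega_+) = -(\omega_+ + \omega_-) = -1$.

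The only slightly delicate bookkeeping step is making sure the signs from expanding $(A_+ - A_-)(B_+ - B_-)$ in the last term line up with the $(\omega_- - \omega_+)$ prefactor so that the cross-term cancellation works; this is just careful arithmetic rather than a real obstacle, so there is no substantive difficulty. Invoking $\omega_+ + \omega_- = 1$ at the end collapses the remaining coefficients to yield $A_+ B_+ - A_- B_- = [AB]$, completing the proof.
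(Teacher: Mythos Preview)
Your proposal is correct and follows exactly the approach indicated in the paper, which simply says to use the definitions and evaluate both sides. Your explicit bookkeeping of the coefficients of the four monomials $A_\pm B_\pm$ is the natural way to carry this out, and the computation is sound.
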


\begin{lemma}[Partial integration in broken Sobolev spaces]\label{lem:partintbroksob}
For $d = 1, 2$, or $3$, let $\Omega \subset \mathbb{R}^d$ be a bounded domain and let $\Gamma \subset \Omega$ be a continuous manifold of codimension 1 that partitions $\Omega$ into the subdomains $\Om{1}, \cdots, \Om{N}$. For $\psi \in H^2(\Omega)$ and $v \in H^1_0(\Om{1}, \cdots, \Om{N})$, we have that
\begin{equation}
(-\lap \psi, v)_{\Omega} = \sum_{i=1}^N(\nab \psi, \nab v)_{\Om{i}} - (\langle \partial_n \psi \rangle, [v])_{\Gamma}
\label{lemres:partintbroksob}
\end{equation}
\begin{proof}
Using the partition of $\Omega$ and Green's first identity on the left-hand side gives interior terms and boundary terms. The interior terms are as we want them. For the boundary terms, only the $\Gamma$-terms remain since $v|_{\partial \Omega} = 0$. Combining terms with common boundary, using Lemma~\ref{jmplem} and the regularity of $\psi$ shows \eqref{lemres:partintbroksob}.
\end{proof}
\end{lemma}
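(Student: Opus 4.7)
The plan is to reduce the left-hand side to a sum of domainwise integrals, apply Green's first identity on each piece, and then reassemble the boundary contributions using the jump identity in Lemma~\ref{jmplem} together with the $H^2$-regularity of $\psi$.

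First I would use the partition $\Omega = \Om{1} \cup \Gamma \cup \cdots \cup \Om{N}$ (with $\Gamma$ being a $(d-1)$-dimensional set of measure zero in $\mathbb{R}^d$) to write
\begin{equation*}
(-\lap \psi, v)_{\Omega} = \sum_{i=1}^N (-\lap \psi, v)_{\Om{i}}.
\end{equation*}
On each subdomain $\Om{i}$, both $\psi|_{\Om{i}} \in H^2(\Om{i})$ and $v|_{\Om{i}} \in H^1(\Om{i})$, so Green's first identity applies and yields
\begin{equation*}
(-\lap \psi, v)_{\Om{i}} = (\nab \psi, \nab v)_{\Om{i}} - (\partial_{n_i} \psi, v)_{\partial \Om{i}},
\end{equation*}
where $n_i$ is the outward unit normal to $\Om{i}$.

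Next I would analyse the boundary terms. Each piece of $\partial \Om{i}$ either lies on $\partial \Omega$, where $v$ vanishes by the assumption $v \in H^1_0(\Om{1}, \cdots, \Om{N})$ (interpreting this as zero trace on $\partial \Omega$), or lies on $\Gamma$, where each smooth piece is shared by exactly two subdomains. Summing the $\Gamma$-contributions from neighbouring subdomains turns a single interface into a product of the form $(\partial_n \psi, v)_{\Gamma}$ evaluated with jumps: the two outward normals are opposite, so adding them produces a jump bracket $[\partial_n \psi \cdot v]$ over $\Gamma$ (with normal fixed once and for all). Thus
\begin{equation*}
\sum_{i=1}^N (\partial_{n_i} \psi, v)_{\partial \Om{i}} = ([\partial_n \psi \cdot v], 1)_{\Gamma}.
\end{equation*}

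Now I would apply the jump identity from Lemma~\ref{jmplem} with $A = \partial_n \psi$ and $B = v$, giving
\begin{equation*}
[\partial_n \psi \cdot v] = [\partial_n \psi]\langle v \rangle + \langle \partial_n \psi \rangle [v] + (\omega_- - \omega_+)[\partial_n \psi][v].
\end{equation*}
The crucial step is to invoke the $H^2(\Omega)$-regularity of $\psi$, which implies that the normal trace $\partial_n \psi$ has no jump across $\Gamma$, i.e.\ $[\partial_n \psi]|_{\Gamma} = 0$. This kills the first and third terms and leaves only $\langle \partial_n \psi \rangle [v]$, which gives exactly $(\langle \partial_n \psi \rangle, [v])_{\Gamma}$ as the remaining boundary contribution and produces \eqref{lemres:partintbroksob}. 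The main (and only) delicate point is justifying $[\partial_n \psi] = 0$ in the trace sense for $\psi \in H^2(\Omega)$ on an arbitrary continuous codimension-$1$ manifold $\Gamma$; this is where the global $H^2$ regularity (as opposed to a mere broken regularity) is essential.
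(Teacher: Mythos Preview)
Your proof is correct and follows exactly the same approach as the paper's: split the integral over the partition, apply Green's first identity on each subdomain, drop the $\partial\Omega$ contributions using $v|_{\partial\Omega}=0$, and reassemble the interface terms via Lemma~\ref{jmplem} together with $[\partial_n\psi]|_\Gamma=0$ from the global $H^2$-regularity of $\psi$. The paper's proof is a terse sketch of precisely these steps, and you have simply expanded them out.
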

\noindent Consider the domain partition and its corresponding broken Sobolev space presented in the premise of Lemma~\ref{lem:partintbroksob}. We define the symmetric bilinear form $\mathcal{A}$ that generalizes the appearence of $A_{h, t}$ defined by (\ref{Ahdef}) to this setting by
\begin{equation}
\begin{split}
\mathcal{A}(w, v) := & \sum_{i=1}^N (\nab w, \nab v)_{\Om{i}} - (\langle \partial_{n} w \rangle, [v])_{\Gamma} - (\langle \partial_{n} v \rangle, [w])_{\Gamma} \\
& + (\gamma h_K^{-1} [w],[v])_{\Gamma} + ([\nab w],[\nab v])_{\Om{O}}
\end{split} 
\label{genAdef}
\end{equation}
where we just let $h_K^{-1}$ be some spatially dependent function of sufficient regularity and $\Om{O}$ be some union of subsets of subdomains. The specifics of $h_K^{-1}$ and $\Om{O}$ are of course taken to be the natural ones when restricting $\mathcal{A}$ to $A_{h, t}$. For $\psi \in H^2(\Omega)$, we have from regularity that $[\psi]|_\Gamma = 0$ in $L^2(\Gamma)$ and that $[\nab \psi]|_{\Om{O}} = 0$ since $(\nab \psi)_+ = (\nab \psi)_-$ on $\Om{O}$ for a non-discrete function such as $\psi$. Using this, we combine $\mathcal{A}$ with Lemma~\ref{lem:partintbroksob} to get the following corollary:
\begin{corollary}[Partial integration in broken Sobolev spaces with bilinear forms $\mathcal{A}$] \label{cor:partintbroksob_A}
For $d = 1, 2$, or $3$, let $\Omega \subset \mathbb{R}^d$ be a bounded domain and let $\Gamma \subset \Omega$ be a continuous manifold of codimension 1 that partitions $\Omega$ into the subdomains $\Om{1}, \cdots, \Om{N}$. For this setting, let the symmetric bilinear form $\mathcal{A}$ be defined by \eqref{genAdef}. For $\psi \in H^2(\Omega)$ and $v \in H^1_0(\Om{1}, \cdots, \Om{N})$, we have that
\begin{equation}
(-\lap \psi, v)_{\Omega} = \mathcal{A}(\psi, v)
\label{corres:partintbroksob_A}
\end{equation}
\end{corollary}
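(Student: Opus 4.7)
The plan is to reduce the corollary directly to Lemma~\ref{lem:partintbroksob} by showing that the three additional terms appearing in $\mathcal{A}(\psi, v)$ but not in the conclusion of that lemma all vanish when $\psi \in H^2(\Omega)$. I would first write out $\mathcal{A}(\psi, v)$ using its definition \eqref{genAdef}, so that
\begin{equation*}
\mathcal{A}(\psi, v) = \sum_{i=1}^N (\nab \psi, \nab v)_{\Om{i}} - (\langle \partial_n \psi \rangle, [v])_{\Gamma} - (\langle \partial_n v \rangle, [\psi])_{\Gamma} + (\gamma h_K^{-1}[\psi], [v])_{\Gamma} + ([\nab \psi], [\nab v])_{\Om{O}}.
\end{equation*}
Compared with the right-hand side of \eqref{lemres:partintbroksob} in Lemma~\ref{lem:partintbroksob}, the first two terms match, so it suffices to check that the last three terms vanish.

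Next I would invoke the regularity of $\psi$ to eliminate the extra terms. Since $\psi \in H^2(\Omega)$, the trace theorem gives $\psi|_\Gamma \in H^{3/2}(\Gamma)$ as a \emph{single-valued} function, so that $[\psi]|_\Gamma = 0$ in $L^2(\Gamma)$; this disposes of both the symmetric Nitsche term $-(\langle \partial_n v \rangle, [\psi])_\Gamma$ and the penalty term $(\gamma h_K^{-1}[\psi],[v])_\Gamma$. For the overlap term, since $\nabla \psi \in H^1(\Omega)^d$, the trace of $\nabla \psi$ from either side onto any interior manifold inside $\Omega$ agrees, so $[\nab \psi] = 0$ a.e. on $\Om{O}$, and therefore $([\nab \psi],[\nab v])_{\Om{O}} = 0$.

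Combining the above, $\mathcal{A}(\psi, v)$ collapses to the right-hand side of \eqref{lemres:partintbroksob}, and an application of Lemma~\ref{lem:partintbroksob} yields $(-\lap \psi, v)_\Omega = \mathcal{A}(\psi, v)$. There is no serious obstacle: the argument is essentially bookkeeping, and the only point requiring mild care is justifying that both one-sided traces of $\nabla \psi$ coincide on the (interior) set $\Om{O}$, which is precisely the standard trace result for $H^1$-functions across an interior codimension-one interface applied componentwise to $\nabla \psi$.
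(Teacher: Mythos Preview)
Your proposal is correct and follows essentially the same route as the paper: use the $H^2(\Omega)$-regularity of $\psi$ to kill the three extra terms in $\mathcal{A}$ (via $[\psi]|_\Gamma = 0$ and $[\nabla\psi]|_{\Om{O}} = 0$) and then invoke Lemma~\ref{lem:partintbroksob}. One small clarification: $\Om{O}$ is a $d$-dimensional overlap region, not a codimension-one interface, so the vanishing of $[\nabla\psi]$ there is simply because a single globally defined $\psi$ has $(\nabla\psi)_+ = (\nabla\psi)_- = \nabla\psi$ on $\Om{O}$, rather than a trace argument.
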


\begin{lemma}[A scaled trace inequality for domain-partitioning manifolds of codimension 1] \label{lem:scatraineqdomcut}
For $d = 1, 2$, or $3$, let $\Omega \subset \mathbb{R}^d$ be a bounded domain with diameter $L$, i.e., $L = \text{diam}(\Omega) = \sup_{x, y \in \Omega} |x - y|$. Let $\Gamma \subset \Omega$ be a continuous manifold of codimension 1 that partitions $\Omega$ into $N$ subdomains. Then
\begin{equation}
\| v \|_{\Gamma}^2 \lesssim L^{-1}\| v \|_{\Omega}^2 + L \| \nab v \|_{\Omega}^2 \quad \forall v \in H^1(\Omega)
\label{lemres:scatraineqdomcut}
\end{equation}
\begin{proof}
If (\ref{lemres:scatraineqdomcut}) holds for the case $N = 2$, then that result may be applied repeatedly to show (\ref{lemres:scatraineqdomcut}) for $N > 2$. We thus assume that $\Gamma$ partitions $\Omega$ into two subdomains denoted $\Om{1}$ and $\Om{2}$ with diameters $L_1$ and $L_2$, respectively. From the regularity assumptions on $v$, we have for $i = 1, 2$, that $v \in H^1(\Om{i})$ and thus 
\begin{equation}
\| v \|_{\Gamma}^2 \leq \| v \|_{\partial \Om{i}}^2 \lesssim L_i^{-1}\| v \|_{\Om{i}}^2 + L_i \| \nab v \|_{\Om{i}}^2
\label{scatraineqdomcut_standard}
\end{equation}
where we have used a standard scaled trace inequality. Using the triangle type inequality $L \leq L_1 + L_2$ and \eqref{scatraineqdomcut_standard}, the left-hand side of \eqref{lemres:scatraineqdomcut} is
\begin{equation}
\| v \|_{\Gamma}^2 \leq \sum_{i=1}^2 \frac{L_i}{L}\| v \|_{\Gamma}^2 \lesssim \sum_{i=1}^2 \bigg(L^{-1}\| v \|_{\Om{i}}^2 + L \| \nab v \|_{\Om{i}}^2 \bigg) \lesssim L^{-1}\| v \|_{\Omega}^2 + L\| \nab v \|_{\Omega}^2
\label{scatraineqdomcut_0}
\end{equation}
which shows \eqref{lemres:scatraineqdomcut}.
\end{proof}
\end{lemma}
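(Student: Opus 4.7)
The natural strategy is to reduce to the case $N = 2$ and then apply a standard scaled trace inequality on each of the two subdomains, combining the resulting bounds into one scaled by the global diameter $L$ rather than by the subdomain diameters.

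First, the reduction from $N > 2$ to $N = 2$ should follow by iterated application: any piece of $\Gamma$ that separates one subdomain (or a union of subdomains) from the rest yields a two-subdomain partition of $\Omega$ to which the $N = 2$ result applies, and summing over the finitely many such pieces costs only a constant factor. This step is essentially bookkeeping and I would not dwell on it.

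For the base case $N = 2$, let $\Om{1}, \Om{2}$ have diameters $L_1, L_2$. Since $\Gamma \subset \partial \Om{i}$ for both $i$, the classical scaled trace inequality on each $\Om{i}$ gives
\[
\| v \|_{\Gamma}^2 \le \| v \|_{\partial \Om{i}}^2 \lesssim L_i^{-1}\| v \|_{\Om{i}}^2 + L_i \| \nab v \|_{\Om{i}}^2.
\]
The key step, and the main potential obstacle, is converting these two subdomain-scale bounds into a single bound scaled by the global diameter $L$. The required geometric fact is the triangle-type inequality $L \le L_1 + L_2$: any two points $x, y \in \bar\Omega$ either both lie in a single $\bar{\Om{i}}$ (so $|x - y| \le L_i$), or their connecting line segment crosses $\Gamma$ at some point $z$ and hence $|x - y| \le |x - z| + |z - y| \le L_1 + L_2$. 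Consequently $\sum_i L_i / L \ge 1$, so I would start from $\|v\|_\Gamma^2 \le \sum_{i=1}^2 (L_i/L)\,\|v\|_\Gamma^2$, insert the per-subdomain trace estimate, and use $L_i \le L$ to collapse the resulting prefactors to $L^{-1}$ on the $L^2$ terms and $L$ on the gradient terms. Summing over $i$ then yields the claim.
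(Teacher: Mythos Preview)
Your proposal is correct and follows essentially the same route as the paper's own proof: reduce to $N=2$, apply the standard scaled trace inequality on each subdomain to get bounds scaled by $L_i$, then use $L \le L_1 + L_2$ in the form $\sum_i L_i/L \ge 1$ to convert to a bound scaled by $L$. Your extra justification of $L \le L_1 + L_2$ via a point on $\Gamma$ along the connecting segment is a nice detail the paper omits.
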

\noindent Let $\Gamma_K = \Gamma_K(t) = K \cap \Gamma(t)$. For $t \in [0, T]$, $j \in \{0, G\}$, a simplex $K \in \mathcal{T}_{j,\Gamma(t)} = \{K \in \mathcal{T}_j : K \cap \Gamma(t) \neq \emptyset\}$, and $v \in H^1(K)$, we have from Lemma~\ref{lem:scatraineqdomcut} that
\begin{equation}
\| v \|_{\Gamma_K}^2 \lesssim h_K^{-1}\| v \|_{K}^2 + h_K \| \nab v \|_{K}^2
\label{scatraineqGamK_warmup}
\end{equation}
where $h_K$ is the diameter of $K$. For $v \in \mathcal{P}(K)$, we have the standard inverse estimate
\begin{equation}
\| D_x^{k} v \|^2_K \lesssim h_K^{-2} \| D_x^{k-1} v \|^2_K \quad \text{ for } k \geq 1
\label{investpolK_standard}
\end{equation}
Using \eqref{investpolK_standard} in \eqref{scatraineqGamK_warmup}, we get the following corollary:
\begin{corollary}[A discrete spatial local inverse inequality for $\Gamma_K(t)$] \label{cor:scatraineqGamK}
For $t \in [0, T]$, $j \in \{0, G\}$, $K \in \mathcal{T}_{j,\Gamma(t)}$ with diameter $h_K$, let $\Gamma_K(t) = K \cap \Gamma(t)$. Then, for $k \geq 0$, we have that
\begin{equation}
\| D_x^k v \|_{\Gamma_K(t)}^2 \lesssim h_K^{-1} \| D_x^k v  \|_{K}^2 \quad \forall v \in V_h(t)
\label{corres:scatraineqGamK}
\end{equation}
\end{corollary}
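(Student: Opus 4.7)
My plan is to assemble the corollary directly from the two ingredients isolated just above: the scaled trace inequality \eqref{scatraineqGamK_warmup} on the cut simplex $K$ with internal manifold $\Gamma_K(t)$, and the standard polynomial inverse estimate \eqref{investpolK_standard} on $K$. Both are already available.

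First, I would apply \eqref{scatraineqGamK_warmup} not to $v$ but to the quantity $D_x^k v$, which yields
\begin{equation*}
\| D_x^k v \|_{\Gamma_K(t)}^2 \lesssim h_K^{-1}\| D_x^k v \|_{K}^2 + h_K \| D_x^{k+1} v \|_{K}^2.
\end{equation*}
Second, I would absorb the extra order of derivative using \eqref{investpolK_standard} (with index $k+1$), so that $h_K \| D_x^{k+1} v \|_K^2 \lesssim h_K \cdot h_K^{-2} \| D_x^k v\|_K^2 = h_K^{-1} \| D_x^k v\|_K^2$. Adding the two bounds gives exactly \eqref{corres:scatraineqGamK}.

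The only nontrivial point I expect is that $v \in V_h(t)$ is broken across $\Gamma_K(t)$, so $v|_K$ is not globally a single polynomial on $K$ and neither \eqref{scatraineqGamK_warmup} nor \eqref{investpolK_standard} is directly applicable. My fix is to read both bounds one-sidedly: on each subpiece $K \cap \Omt{i}$, the restriction of $v$ is the restriction of a polynomial $v_0 \in V_{h,0}$ or $v_G \in V_{h,G}$ defined on a full simplex containing $K \cap \Omt{i}$, so the polynomial extension of $v|_{K \cap \Omt{i}}$ to all of $K$ is well defined and \eqref{investpolK_standard} applies to it. Applying Lemma~\ref{lem:scatraineqdomcut} to each side of $\Gamma_K(t)$ in $K$ separately and summing gives the trace bound in the same form, and the inverse estimate then transfers through the (quasi-uniform) polynomial extension. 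This side-wise reading of both inequalities is the main technical obstacle, but it is purely bookkeeping once the extensions are in place.
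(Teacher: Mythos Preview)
Your proof is correct and matches the paper's approach exactly: the paper derives the corollary in one line by ``Using \eqref{investpolK_standard} in \eqref{scatraineqGamK_warmup}''. Your additional remark about the broken nature of $v \in V_h(t)$ across $\Gamma_K(t)$ is a valid subtlety that the paper leaves implicit---in every application the corollary is invoked on one-sided polynomial pieces such as $(\nabla v)_i$ or $(\nabla v)_\sigma$, for which both \eqref{scatraineqGamK_warmup} and \eqref{investpolK_standard} apply directly on the full simplex, so your side-wise reading is precisely the intended interpretation.
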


\begin{lemma}[A discrete spatial inverse inequality for $\Gamma(t)$] \label{invineqgamlem}
Let the mesh-dependent norm $\| \cdot \|_{-1/2,h,\Gamma(t)}$ be defined by $(\ref{def:HHnorm})$. Then, for $t \in [0, T]$, we have that 
\begin{equation}
\| \langle \partial_{n} v \rangle \|_{-1/2,h,\Gamma(t)}^2 \lesssim \sum_{i=1}^2 \|\nab v\|_{\Omt{i}}^2 + \| [\nab v] \|_{\Omt{O}}^2 \quad \forall v \in V_h(t)
\label{invineqgamlemres}
\end{equation} 
\begin{proof}
To lighten the notation we omit the time dependence, which has no importance here anyways. We follow the proof of the corresponding inequality in \cite{Hansbo:2002aa} with some modifications. We use index $j \in \{0, G \}$, such that, if $j = 0$, then $i = 1$ and if $j = G$, then $i = 2$, and let $\Gamma_{K_j} = K_j \cap \Gamma$ and $\mathcal{T}_{j,\Gamma} = \{K_j \in \mathcal{T}_j : K_j \cap \Gamma \neq \emptyset\}$. Note that for $i=1, 2$,
\begin{equation}
\sum_{K_0 \in \mathcal{T}_{0,\Gamma}} h_{K_0} \| v_i \|_{\Gamma_{K_0}}^2 \lesssim \sum_{K_G \in \mathcal{T}_{G,\Gamma}} h_{K_G} \| v_i \|_{\Gamma_{K_G}}^2
\label{gamk0ineqgamkg}
\end{equation}
which follows from $\cup_{K_0 \in \mathcal{T}_{0,\Gamma}} \Gamma_{K_0} = \Gamma = \cup_{K_G \in \mathcal{T}_{G,\Gamma}} \Gamma_{K_G}$ and the inter-quasi-uniformity of the meshes. Since $\partial_{n} v = n \cdot \nab v$ and $|\omega_i| |n| \leq 1$, we have $\| \omega_i (\partial_{n} v)_i \|_{\Gamma_{K_j}}^2 \leq \|(\nab v)_i \|_{\Gamma_{K_j}}^2$. Using this after \eqref{gamk0ineqgamkg}, and followed by  Corollary~\ref{cor:scatraineqGamK}, the left-hand side of (\ref{invineqgamlemres}) is 
\begin{equation}
\begin{split}
\| \langle \partial_{n} v \rangle \|_{-1/2,h,\Gamma}^2 & \lesssim \sum_{i=1}^2 \sum_{K_j \in \mathcal{T}_{j, \Gamma}} h_{K_j} \|(\nab v)_i \|_{\Gamma_{K_j}}^2 \lesssim \sum_{i=1}^2 \sum_{K_j \in \mathcal{T}_{j, \Gamma}} \|(\nab v)_i\|_{K_j}^2 \\
& = \sum_{K_0 \in \mathcal{T}_{0,\Gamma}} \bigg( \|\nab v\|_{K_0 \cap \Om{1}}^2 +  \|(\nab v)_1\|_{K_0 \cap \Om{2}}^2 \bigg) + \sum_{K_G \in \mathcal{T}_{G,\Gamma}} \|\nab v\|_{K_G}^2 
\end{split} 
\label{invineqgamlemfin}
\end{equation}
%
The resulting terms may be estimated by the right-hand side of \eqref{invineqgamlemres}.
\end{proof}
\end{lemma}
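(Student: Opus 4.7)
The plan is to reduce the left-hand side to a sum over background simplices of weighted one-sided trace norms, apply the scaled trace/inverse inequality from Corollary~\ref{cor:scatraineqGamK} on each simplex, and then reorganise the resulting volume contributions against the three terms on the right-hand side. The only non-routine step is handling the side of $\Gamma(t)$ whose natural mesh is $\mathcal{T}_G$ rather than $\mathcal{T}_0$.

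First I would expand $\langle \partial_n v \rangle = \omega_1 (\partial_n v)_1 + \omega_2 (\partial_n v)_2$ and use $|\omega_i| \le 1$ together with $|(\partial_n v)_i| \le |(\nab v)_i|$ to dominate the local trace norm $\|\langle \partial_n v \rangle\|_{\Gamma_{K_0}}^2$ by $\|(\nab v)_1\|_{\Gamma_{K_0}}^2 + \|(\nab v)_2\|_{\Gamma_{K_0}}^2$. For the $i=1$ contribution I would apply Corollary~\ref{cor:scatraineqGamK} directly on each cut $K_0 \in \mathcal{T}_{0,\Gamma(t)}$, which is legitimate because $(\nab v)_1$ is polynomial on the whole $K_0$. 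This yields $\sum_{K_0} \|(\nab v)_1\|_{K_0}^2$, which splits into a piece on $K_0 \cap \Omt{1}$, controlled by $\|\nab v\|_{\Omt{1}}^2$, and a piece on $K_0 \cap \Omt{2}$. Summed over $K_0$, the latter equals $\|(\nab v)_1\|_{\Omt{O}}^2$ by the definition of the overlap, and I would convert it into the remaining right-hand side terms via the identity $(\nab v)_1 = (\nab v)_2 + [\nab v]$ on $\Omt{O}$.

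For the $i=2$ contribution Corollary~\ref{cor:scatraineqGamK} cannot be applied on $K_0$, because $(\nab v)_2$ is polynomial only on cells of $\mathcal{T}_G$. My approach is to first swap the summation index from $\mathcal{T}_{0,\Gamma(t)}$ to $\mathcal{T}_{G,\Gamma(t)}$, using the identity $\bigcup_{K_0} \Gamma_{K_0} = \Gamma(t) = \bigcup_{K_G} \Gamma_{K_G}$ and the inter-quasi-uniformity $h_{K_0} \sim h_{K_G}$ for simplices meeting a common portion of $\Gamma(t)$, which turns $\sum_{K_0} h_{K_0} \|(\nab v)_2\|_{\Gamma_{K_0}}^2$ into the analogous sum over $\mathcal{T}_{G,\Gamma(t)}$ up to a constant. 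Corollary~\ref{cor:scatraineqGamK} applied cell-by-cell on $\mathcal{T}_{G,\Gamma(t)}$, followed by the observation that each such $K_G \subset \Omt{2}$, then delivers $\|\nab v\|_{\Omt{2}}^2$.

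The main obstacle I anticipate is precisely this mesh-swap step, since the $h_K$ weighting in the norm is tied to $\mathcal{T}_0$ while the $i=2$ one-sided trace naturally fits $\mathcal{T}_G$; the inter-quasi-uniformity of the two meshes is the substantive ingredient that makes it go through and should be cited explicitly where invoked. Beyond that, the argument is bookkeeping: a clean simplex-by-simplex application of the local trace/inverse inequality, combined with the overlap identity expressing $(\nab v)_1$ in terms of $(\nab v)_2$ and $[\nab v]$.
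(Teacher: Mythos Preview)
Your proposal is correct and follows essentially the same route as the paper: expand the average, bound $\omega_i(\partial_n v)_i$ by $(\nab v)_i$, handle the $i=1$ side with Corollary~\ref{cor:scatraineqGamK} on $\mathcal{T}_0$, swap to $\mathcal{T}_G$ via inter-quasi-uniformity for the $i=2$ side before applying the same corollary, and finish by splitting the $K_0$-contributions over $\Omt{1}$ and $\Omt{O}$ and using $(\nab v)_1 = (\nab v)_2 + [\nab v]$ on the overlap. The paper leaves that last overlap identity implicit (``may be estimated by the right-hand side''), whereas you spell it out, but otherwise the arguments coincide.
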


\section{Interpolation} \label{sec:interpolation}

For the definition of the spatial interpolation operator, we recall the discrete spaces $V_{h,0}$ and $V_{h,G}$. We define the spatial interpolation operators $\pi_{h,0} : L^1(\Om{0}) \to V_{h,0}$ and $\pi_{h,G} : L^1(G) \to V_{h,G}$ to be the Scott--Zhang interpolation operators for the spaces $V_{h,0}$ and $V_{h,G}$, respectively, where the defining integrals are taken over entire simplices. We point out that the evolution of $G$ makes $\pi_{h,G}$ time-dependent, but since that does not matter here we omit it to lighten the notation. For $t \in [0, T]$, we define the spatial interpolation operator $I_{h,t} : L^1(\Om{0}) \to V_h(t)$ by
\begin{equation}
I_{h,t} v|_{\Omt{1}} := \pi_{h,0}v|_{\Omt{1}}, \quad \quad I_{h,t} v|_{\Omt{2}} := \pi_{h,G}v|_{\Omt{2}}
\label{def:interph}
\end{equation} 
The operator $I_{h,t}$ is used in the proofs of Lemma~\ref{lem:ritzop_error} and Lemma~\ref{lem:shiftop_error}, where energy estimates of its interpolation error are used. We present these estimates in the following two lemmas:
\begin{lemma}[An interpolation error estimate in $\norma{\cdot}$]
\label{lem:interphest_energy}
Let $\norma{\cdot}$ and $I_{h,t}$ be defined by \eqref{def:anorm} and \eqref{def:interph}, respectively. Then 
\begin{equation}
\norma{v - I_{h,t} v} \lesssim h^{p}\| D_x^{p+1}v\|_{\Om{0}} \quad \forall v \in H^{p+1}(\Om{0})
\label{lemres:interphest_energy}   
\end{equation} 
\begin{proof} To lighten the notation we omit the time dependence, which has no importance here anyways. Letting $w = v - I_{h,t} v$, and using the definition of $\norma{\cdot}$, the square of the left-hand side of (\ref{lemres:interphest_energy}) is 
\begin{equation}
\norma{w}^2 =  \sum_{i = 1}^2 \| \nab w \|_{\Om{i}}^2 + \|\langle \partial_{n} w \rangle \|_{-1/2,h,\Gamma}^2 + \|[w] \|_{1/2,h,\Gamma}^2 + \|[\nab w]\|_{\Om{O}}^2
\label{interphest_energy0}   
\end{equation} 
Letting $w_j = v - \pi_{h, j}v$, we treat each term in \eqref{interphest_energy0} separately, starting with the first:
\begin{equation}
\| \nab w \|_{\Om{i}}^2 \leq \sum_{K \in \mathcal{T}_{j, \Om{i}}} \| \nab w_j \|_{K}^2
\label{interphest_energy0_1}   
\end{equation} 
Following the proof of Lemma~\ref{invineqgamlem} and using \eqref{scatraineqGamK_warmup}, the second term is
\begin{equation}
\|\langle \partial_{n} w \rangle \|_{-1/2,h,\Gamma}^2 \lesssim \sum_{\substack{i \in \{1, 2\}\\ K_j \in \mathcal{T}_{j, \Gamma}}} h_{K_j} \| (\nab w)_i \|_{\Gamma_{K_j}}^2 \lesssim \sum_{\substack{i \in \{1, 2\}\\ K \in \mathcal{T}_{j, \Om{i}}}} \bigg(\| \nab w_j \|_{K}^2 + h_{K}^2\| D_x^2 w_j \|_{K}^2 \bigg)
\label{interphest_energy0_2}
\end{equation}
The third term in (\ref{interphest_energy0}) receives the same treatment, thus 
\begin{equation}
\|[w] \|_{1/2,h,\Gamma}^2 \lesssim \sum_{\substack{i \in \{1, 2\}\\ K_j \in \mathcal{T}_{j, \Gamma}}} h_{K_j}^{-1} \| w_i \|_{\Gamma_{K_j}}^2 \lesssim \sum_{\substack{i \in \{1, 2\}\\ K \in \mathcal{T}_{j, \Om{i}}}} \bigg( h_{K}^{-2}\| w_j \|_{K}^2 + \| \nab w_j \|_{K}^2 \bigg)
\label{interphest_energy0_3}
\end{equation} 
The fourth term in (\ref{interphest_energy0}) is
\begin{equation}
\|[\nab w]\|_{\Om{O}}^2 \lesssim \sum_{i=1}^2\|(\nab w)_i \|_{\Om{O}}^2 \lesssim \sum_{i=1}^2  \sum_{K \in \mathcal{T}_{j, \Om{i}}} \| \nab w_j \|_{K}^2
\label{interphest_energy0_4}   
\end{equation} 
Summing up what we have, i.e., using (\ref{interphest_energy0_1})--(\ref{interphest_energy0_4}) in (\ref{interphest_energy0}), we get 
\begin{equation}
\norma{w}^2 \lesssim \sum_{i=1}^2  \sum_{K \in \mathcal{T}_{j, \Om{i}}} \bigg(h_{K}^{-2}\| w_j \|_{K}^2 + \| \nab w_j \|_{K}^2 + h_{K}^2\| D_x^2 w_j \|_{K}^2 \bigg)
\lesssim h^{2p} \| D_x^{p+1} v \|_{\Om{0}}^2
\label{interphest_energy_fin}   
\end{equation}
where we have used standard local interpolation error estimates for Scott--Zhang interpolation operators. Taking the square root of both sides gives (\ref{lemres:interphest_energy}).
\end{proof}
\end{lemma}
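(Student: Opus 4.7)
The plan is to split the energy norm into its four ingredients and bound each contribution by a sum of local Scott--Zhang interpolation errors on the underlying simplices of $\mathcal{T}_0$ and $\mathcal{T}_G$. I would first drop the time dependence for notational clarity, set $w = v - I_{h,t}v$, and use the indexing convention $j \in \{0, G\}$ paired with $i \in \{1, 2\}$ so that on $\Om{i}(t)$ the interpolation error coincides with $w_j = v - \pi_{h,j}v$. Then expanding the square of $\norma{w}$ according to \eqref{def:anorm} gives four separate terms to estimate.

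For the bulk gradient term $\sum_i \|\nab w\|_{\Om{i}(t)}^2$, each summand is bounded by extending the integration to the full simplices of the corresponding mesh that touch $\Om{i}(t)$, yielding $\sum_{K \in \mathcal{T}_{j,\Om{i}}} \|\nab w_j\|_K^2$. For the two $\Gamma(t)$-weighted terms, I would first use that $|\omega_i \partial_n v| \le |\nab v|$ pointwise on each face and that the $h^{\pm 1}$ weights of the $\|\cdot\|_{\pm 1/2, h, \Gamma(t)}$ norms are comparable to diameters of simplices on either side (via the inter-quasi-uniformity of $\mathcal{T}_0$ and $\mathcal{T}_G$, essentially the argument used in Lemma~\ref{invineqgamlem}). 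After this reduction, Corollary~\ref{cor:scatraineqGamK}, or the scaled trace inequality \eqref{scatraineqGamK_warmup} that produces it, converts each $\|\cdot\|_{\Gamma_K}^2$ contribution into $h_K^{-2}\|w_j\|_K^2 + \|\nab w_j\|_K^2$ (for the $[\cdot]$-jump term) or $\|\nab w_j\|_K^2 + h_K^2\|D_x^2 w_j\|_K^2$ (for the average-of-normal-derivative term). For the overlap gradient jump $\|[\nab w]\|_{\Om{O}(t)}^2$, I would simply bound it by $\sum_i \|(\nab w)_i\|_{\Om{O}(t)}^2$ and extend each side to the simplices of the corresponding mesh.

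Collecting everything produces a bound of the form
\begin{equation*}
\norma{w}^2 \lesssim \sum_{i=1}^2 \sum_{K \in \mathcal{T}_{j,\Om{i}}} \Big(h_K^{-2}\|w_j\|_K^2 + \|\nab w_j\|_K^2 + h_K^2\|D_x^2 w_j\|_K^2 \Big),
\end{equation*}
from which standard local Scott--Zhang interpolation error estimates deliver the desired $h^{2p}\|D_x^{p+1} v\|_{\Om{0}}^2$ on the right-hand side after using quasi-uniformity to pull out a uniform $h$. Taking square roots finishes the proof.

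The main subtlety I expect is the $\Gamma(t)$-terms: since $\Gamma(t)$ cuts through simplices arbitrarily, I cannot invoke a standard trace inequality that assumes the trace covers an entire face of the simplex, and I also need to be consistent about which mesh's simplex size $h_K$ weights each piece of $\Gamma_K(t)$. Choosing $j$ in tandem with $i$ and invoking the cut-element trace inequality from Corollary~\ref{cor:scatraineqGamK} is the cleanest way to handle both issues uniformly, and the inter-mesh quasi-uniformity assumption lets me pass between $h_{K_0}$ and $h_{K_G}$ freely when needed.
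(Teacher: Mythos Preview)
Your proposal is correct and follows essentially the same approach as the paper's proof: both expand $\norma{w}^2$ into its four constituents, reduce each to local sums of $h_K^{-2}\|w_j\|_K^2 + \|\nab w_j\|_K^2 + h_K^2\|D_x^2 w_j\|_K^2$ via the cut-element trace inequality \eqref{scatraineqGamK_warmup} and the argument of Lemma~\ref{invineqgamlem}, and then conclude with standard Scott--Zhang estimates. One minor remark: Corollary~\ref{cor:scatraineqGamK} is stated for $v \in V_h(t)$, so for the interpolation error you indeed need the $H^1$-version \eqref{scatraineqGamK_warmup} that you mention, which is exactly what the paper uses.
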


\begin{lemma}[An interpolation error estimate in $\normaspecnm{\cdot}$] \label{lem:interphest_energy_special}
For $n = 1, \dots, N$, let $\normaspecnm{\cdot}$ and $I_{h, n} = I_{h,t_n}$ be defined by \eqref{def:normaspecnm} and \eqref{def:interph}, respectively. Then
\begin{equation}
\normaspecnm{v - I_{h,n} v} \lesssim h^{p}\| D_x^{p+1}v\|_{\Om{0}} \quad \forall v \in H^{p+1}(\Om{0})
\label{lemres:interphest_energy_special}   
\end{equation} 
\begin{proof} 
Letting $w = v - I_{h,n} v$, and plugging $w$ into $\normaspecnm{\cdot}^2$, we have
\begin{equation}
\begin{split}
\normaspecnm{w}^2 & \lesssim \norman{w}^2 + \|\langle \partial_{n} w \rangle \|_{-1/2,h,\Gamma_{m} \setminus \Gamma_{n}}^2 
\end{split} 
\label{interphest_energy_special_init}   
\end{equation}  
The second term is treated by following the proof of Lemma~\ref{invineqgamlem}. We partition $\Gamma_{m} \setminus \Gamma_{n}$ into $\grave{\Gamma}_{i} := (\Gamma_{m} \setminus \Gamma_{n}) \cap \Om{{i,n}}$, use the interdependent indices $i$ and $j$, and write $\grave{\Gamma}_{i {K_j}} = K_j \cap \grave{\Gamma}_i$. Letting $w_j = v - \pi_{h, j}v$, we have
\begin{equation}
\begin{split}
\| \langle \partial_{n} w \rangle \|_{-1/2,h,\Gamma_{m} \setminus \Gamma_{n}}^2 & \lesssim \sum_{\substack{\grave{\Gamma}_{i {K_j}}\\ \sigma \in \{+, -\}}} h_{K_j} \| (\nab w_j)_\sigma \|_{\grave{\Gamma}_{i {K_j}}}^2 \\
& \lesssim \sum_{\substack{i \in \{1, 2\}\\ K \in \mathcal{T}_{j, \Om{{i,n}}}}} \bigg(\| \nab w_j \|_{K}^2 + h_{K}^2\| D_x^2 w_j \|_{K}^2 \bigg) \lesssim h^{2p} \| D_x^{p+1} v \|_{\Om{0}}^2
\end{split} 
\label{interphest_energy_special_inveq}
\end{equation}
where we have used \eqref{scatraineqGamK_warmup} and standard local interpolation error estimates for Scott--Zhang interpolation operators. Using Lemma~\ref{lem:interphest_energy} and \eqref{interphest_energy_special_inveq} in \eqref{interphest_energy_special_init} shows \eqref{lemres:interphest_energy_special}.
\end{proof}
\end{lemma}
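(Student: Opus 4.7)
The plan is to decompose the squared norm using its definition,
\begin{equation*}
\normaspecnm{w}^2 = \norman{w}^2 + \|\langle \partial_n w\rangle\|_{-1/2,h,\Gamma_m}^2,
\end{equation*}
where $w = v - I_{h,n}v$, and to dispatch each piece separately. The first piece is immediate: Lemma~\ref{lem:interphest_energy} at time $t_n$ delivers $\norman{w} \lesssim h^p\|D_x^{p+1}v\|_{\Om{0}}$ without modification.

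For the boundary average term on $\Gamma_m$, I would use the covering $\Gamma_m \subseteq (\Gamma_m \cap \Gamma_n) \cup (\Gamma_m \setminus \Gamma_n)$ to split the integral. The contribution over $\Gamma_m \cap \Gamma_n$ is bounded by $\|\langle \partial_n w\rangle\|_{-1/2,h,\Gamma_n}^2 \leq \norman{w}^2$, so it is already controlled by the previous step. The remaining and genuinely new contribution is $\|\langle \partial_n w\rangle\|_{-1/2,h,\Gamma_m \setminus \Gamma_n}^2$, which is the part of the discontinuity interface at time $t_m$ that lies in the interior of one of the subdomains $\Om{i,n}$ defined by the mesh at time $t_n$.

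The strategy for this last term mimics the structure of Lemma~\ref{invineqgamlem} but without invoking the polynomial inverse estimate, since $w$ is not discrete. Concretely, partition $\Gamma_m \setminus \Gamma_n = \grave{\Gamma}_1 \cup \grave{\Gamma}_2$ with $\grave{\Gamma}_i := (\Gamma_m \setminus \Gamma_n) \cap \Om{{i,n}}$, use the interdependent indices $i \in \{1,2\}$ and $j \in \{0,G\}$ from the earlier proofs, and note that on each simplex $K_j \in \mathcal{T}_j$ that meets $\grave{\Gamma}_i$ the function $w$ agrees with the Scott--Zhang error $w_j = v - \pi_{h,j}v$. Applying the scaled trace bound \eqref{scatraineqGamK_warmup} to the trace of $\nabla w_j$ from each side of $\Gamma_m$ yields, after summation,
\begin{equation*}
\|\langle \partial_n w\rangle\|_{-1/2,h,\Gamma_m \setminus \Gamma_n}^2 \lesssim \sum_{i=1}^2 \sum_{K \in \mathcal{T}_{j,\Om{{i,n}}}}\Bigl(\|\nabla w_j\|_K^2 + h_K^2 \|D_x^2 w_j\|_K^2\Bigr).
\end{equation*}
Standard local Scott--Zhang interpolation error estimates then convert the right-hand side into $h^{2p}\|D_x^{p+1}v\|_{\Om{0}}^2$, as required.

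The main obstacle is bookkeeping around $\Gamma_m \setminus \Gamma_n$: it is an interior manifold of $\Om{{i,n}}$ where both one-sided limits of $\nabla w$ contribute to $\langle \partial_n w\rangle$, and these limits can refer to Scott--Zhang values on distinct but adjacent simplices of $\mathcal{T}_j$. One must therefore sum a $\sigma \in \{+,-\}$ index over the cut simplices and use quasi-uniformity of $\mathcal{T}_0$ and $\mathcal{T}_G$ so that $h_{K_0}$ and $h_{K_G}$ are comparable wherever $\Gamma_m$ cuts both meshes. No new analytic ingredient beyond the scaled trace inequality and Scott--Zhang estimates is needed, so once the covering of $\Gamma_m \setminus \Gamma_n$ by cut simplices is set up correctly, the estimate collapses to the same $h^p$ rate obtained in Lemma~\ref{lem:interphest_energy}.
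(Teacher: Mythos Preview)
Your proposal is correct and follows essentially the same approach as the paper's proof: both split off the $\norman{\cdot}$ part handled by Lemma~\ref{lem:interphest_energy}, partition $\Gamma_m \setminus \Gamma_n$ into the pieces $\grave{\Gamma}_i$, and treat the remaining average term via the scaled trace inequality \eqref{scatraineqGamK_warmup} (rather than an inverse estimate) followed by local Scott--Zhang error bounds. Your explicit mention of the $\sigma \in \{+,-\}$ bookkeeping and the quasi-uniformity comparison of $h_{K_0}$ and $h_{K_G}$ matches the paper's treatment exactly.
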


\noindent For $q \in \mathbb{N}$ and $n = 1, \dots, N$, we define the temporal interpolation operator $\tilde{I}^n : C(I_n) \to \mathcal{P}^{q}(I_n)$ by
\begin{subequations}
\begin{equation}
(\tilde{I}^n v)_n^- = v_n^- \label{ipdefnm}
\end{equation}  
and with the additional condition for $q \geq 1$,   
\begin{equation}
\int_{I_n} \tilde{I}^nv w \ud t = \int_{I_n} v w \ud t \quad \forall w \in \mathcal{P}^{q-1}(I_n) \label{ipdeforto}
\end{equation} 
\label{ipdef}
\end{subequations}
The operator $\tilde{I}^n$ is used in the proof of Theorem~\ref{aprithm} where an estimate of its interpolation error is used. We present this estimate in the following lemma:

\begin{lemma}[An interpolation error estimate in $\| \cdot \|_{\Om{0}, I_n}$] \label{lemInv}
Let $\tilde{I}^n$ be defined by \eqref{ipdef}. Then, for $q = 0, 1$, for any function $v : \Om{0} \times I_n \to \mathbb{R}$ with sufficient spatial and temporal regularity we have that $\tilde{I}^n$ is bounded and that
\begin{equation}
\|v - \tilde{I}^nv \|_{\Om{0}, I_n} \lesssim k_n^{q+1}\| \dot{v}^{(q+1)} \|_{\Om{0}, I_n}
\label{estvInv}
\end{equation}
where $\| v \|_{\Om{0}, I_n} = \max_{t \in I_n} \|v \|_{\Om{0}}$, $k_n = t_n - t_{n-1}$, and $\dot{v}^{(q+1)} = \partial^{q+1}v/\partial t^{q+1}$.

\begin{proof}
The proof is exactly as in~\cite{Eriksson1995} which we refer to for details. 
\end{proof}
\end{lemma}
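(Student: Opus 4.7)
The plan is to treat $q=0$ and $q=1$ separately. The $q=0$ case is elementary: by \eqref{ipdefnm}, $\tilde{I}^n v$ is the constant $v_n^-$ on $I_n$, so for any $t \in I_n$ the fundamental theorem of calculus gives
\begin{equation*}
(v - \tilde{I}^n v)(\cdot, t) = -\int_t^{t_n} \dot{v}(\cdot, s) \ud s,
\end{equation*}
and taking the $L^2(\Om{0})$-norm followed by the maximum over $t \in I_n$ yields $\| v - \tilde{I}^n v \|_{\Om{0}, I_n} \leq k_n \| \dot{v} \|_{\Om{0}, I_n}$, which is \eqref{estvInv} for $q=0$. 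Boundedness of $\tilde{I}^n$ is immediate since $\| \tilde{I}^n v \|_{\Om{0}, I_n} = \| v_n^- \|_{\Om{0}} \leq \| v \|_{\Om{0}, I_n}$.

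For $q = 1$, my strategy is to transform to the reference interval $\hat{I} = [0,1]$ via the affine map $t = t_{n-1} + k_n s$, which pulls back $\tilde{I}^n$ to a reference operator $\hat{I} : C(\hat{I}) \to \mathcal{P}^1(\hat{I})$ defined by $(\hat{I} \hat{v})(1) = \hat{v}(1)$ and $\int_0^1 \hat{I}\hat{v} \ud s = \int_0^1 \hat{v} \ud s$. Writing $\hat{I} \hat{v}(s) = a + bs$, these two conditions give a linear system with bounded inverse, so $\hat{I}$ is a bounded operator $C(\hat{I}) \to C(\hat{I})$. In particular, $\hat{I}$ preserves polynomials of degree $\le 1$ by construction, so the standard Bramble--Hilbert argument on $\hat{I}$ yields
\begin{equation*}
\| \hat{v} - \hat{I}\hat{v} \|_{L^\infty(\hat{I})} \lesssim \| \hat{v}^{(2)} \|_{L^\infty(\hat{I})}
\end{equation*}
pointwise in $x \in \Om{0}$. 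Scaling back to $I_n$ via $\frac{\ud}{\ud t} = \frac{1}{k_n}\frac{\ud}{\ud s}$ introduces a factor $k_n^2$, so taking the $L^2(\Om{0})$-norm and then the maximum over $t \in I_n$ yields $\| v - \tilde{I}^n v \|_{\Om{0}, I_n} \lesssim k_n^2 \| \ddot{v} \|_{\Om{0}, I_n}$, which is \eqref{estvInv} for $q=1$. Boundedness of $\tilde{I}^n$ on $\| \cdot \|_{\Om{0}, I_n}$ follows from boundedness of $\hat{I}$.

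The main (minor) technical point is pushing the scalar estimate inside the $L^2(\Om{0})$-norm: since all operations are linear in $t$ for each fixed $x$, one applies the scalar reference-interval estimate pointwise in $x$, squares, integrates over $\Om{0}$, and then takes the maximum over $t \in I_n$. Commuting the maximum and the spatial $L^2$-norm is legitimate because the bound on the right-hand side is itself taken as a maximum over $I_n$. Apart from this, the argument is routine, which matches the authors' reference to the identical proof in \cite{Eriksson1995}.
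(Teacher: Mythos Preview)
Your $q=0$ argument is correct, and your overall strategy for $q=1$ (reference interval, boundedness, exactness on $\mathcal{P}^1$, then Bramble--Hilbert and scaling) is the standard one and matches what the paper defers to in~\cite{Eriksson1995}.

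There is, however, a genuine gap in the step you yourself flag as the ``main (minor) technical point.'' Applying the scalar reference-interval bound pointwise in $x$ and then integrating gives, for each fixed $t$,
\[
\|(v-\tilde I^n v)(\cdot,t)\|_{\Om{0}}^2 \;\lesssim\; k_n^4 \int_{\Om{0}} \Big(\sup_{s\in I_n}|\ddot v(x,s)|\Big)^2 \ud x,
\]
and the right-hand side is $k_n^4\,\|\ddot v\|_{L^2_x L^\infty_t}^2$, not $k_n^4\,\|\ddot v\|_{\Om{0},I_n}^2 = k_n^4\,\|\ddot v\|_{L^\infty_t L^2_x}^2$. Since in general $\int_{\Om{0}}\sup_t(\cdot)\ge \sup_t\int_{\Om{0}}(\cdot)$, your commutation goes the wrong way, and the sentence ``commuting the maximum and the spatial $L^2$-norm is legitimate because the bound on the right-hand side is itself taken as a maximum over $I_n$'' does not justify the swap.

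The fix is easy and does not change your outline. Either run the Bramble--Hilbert argument directly for the Banach-space-valued map $\hat I:C([0,1];L^2(\Om{0}))\to \mathcal{P}^1([0,1];L^2(\Om{0}))$ (the defining conditions, point evaluation at $1$ and the mean, make sense and $\hat I$ is bounded there, exact on $\mathcal{P}^1$), or use the Peano kernel: for each fixed $s\in[0,1]$ write
\[
(\hat v-\hat I\hat v)(s)=\int_0^1 K(s,r)\,\hat v''(r)\ud r,
\]
with $\sup_s\int_0^1|K(s,r)|\ud r<\infty$, and then apply Minkowski's integral inequality in $L^2(\Om{0})$ before taking the supremum in $s$. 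Both routes yield $\|v-\tilde I^n v\|_{\Om{0},I_n}\lesssim k_n^2\|\ddot v\|_{\Om{0},I_n}$ as claimed.
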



\bibliographystyle{IEEEtran}
\bibliography{bibliography}

\end{document}